\def\<#1>{\mathinner{\langle#1\rangle}}
\title{Fields interpretable in the free group}
\date{}
\author{Rizos Sklinos\thanks{This material is based upon work partially supported by the National Science Foundation under Grant No. 1953784 and by the National Science Foundation of China under Grant No. 12350610234. \\ Mathematics Subject Classification (2010): 03C60, 20F67}}
\begin{document}

\maketitle

\begin{abstract} 
We prove that no infinite field is interpretable in the first-order theory of nonabelian free groups. We also obtain a characterization of Abelian groups interpretable in this theory. 
\end{abstract}

\section{Introduction} 
Tarski in 1946 asked whether the first-order theory of nonabelian free groups is complete. In seminal works Sela \cite{MR2238945} and Kharlampovich-Myasnikov \cite{MR2293770} 
answered the question positively. Moreover, Sela proved that this theory is stable \cite{MR3034289}. In one of the first papers on the subject, after the solution of Tarski's problem, 
Pillay conjectured that no infinite field is interpretable in this theory and he coined it the name "the first-order theory of the free group" \cite{MR2400726}. In this paper we confirm the conjecture. 

\begin{thmIntro}\label{Major1}
The first-order theory of the free group does not interpret an infinite field.
\end{thmIntro}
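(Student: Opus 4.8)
The plan is to argue by contradiction inside $T^{\mathrm{eq}}$ of a sufficiently saturated nonabelian free group $\mathbb{F}$, reducing the problem — via classical stable group theory — to a structural statement about interpretable abelian groups, and then establishing that statement from the geometry of definable and interpretable sets in $\mathbb{F}$.

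Suppose an infinite field $K$ were interpretable in $T_{fg}$. Since $T_{fg}$ is stable, $K$ is an infinite stable field, hence connected, and both $(K,+)$ and $(K^{\times},\cdot)$ are connected infinite interpretable abelian groups, with $K^{\times}$ acting faithfully and definably on $(K,+)$ by automorphisms and transitively on the nonzero elements; in particular $(K,+)$ is torsion-free divisible if $\mathrm{char}\,K=0$ and of exponent $p$ if $\mathrm{char}\,K=p$. Thus it suffices to prove $(\ast)$: \emph{every infinite interpretable abelian group in $T_{fg}$ is, up to finite index and a finite subgroup, isomorphic to $\mathbb{Z}^{n}$ for some $n\geq 1$} — for then $(K,+)$ would be finitely generated, hence neither divisible nor of finite exponent, a contradiction. (A strictly weaker statement would already do, e.g.\ that no infinite interpretable abelian group admits a transitive interpretable abelian group of automorphisms.)

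The bulk of the work is $(\ast)$, and I expect the imaginary case to be the main obstacle. For \emph{definable} abelian subgroups of $\mathbb{F}^{n}$ the statement is close to known: the elementary geometry of free groups — commutation is transitive on nontrivial elements, centralizers are cyclic, abelian subgroups are cyclic — together with Sela's description of definable sets and the Merzlyakov/shortening machinery attached to the JSJ decomposition, forces such a subgroup to be virtually cyclic along each "JSJ coordinate", hence commensurable with a power of $\mathbb{Z}$. Pushing this through for genuinely \emph{interpretable} groups is the delicate point, since $T_{fg}$ does not eliminate imaginaries. Here I would aim to prove just enough of a "geometric elimination of imaginaries" for the abelian configurations in play: using homogeneity of $\mathbb{F}$, the description of forking over nonabelian free factors, and control of canonical bases and $\mathrm{acl}^{\mathrm{eq}}$ via the JSJ decomposition, show that the connected component of any infinite interpretable abelian group is non-orthogonal, in a sufficiently controlled way, to the generic type of $\mathbb{F}$, so that it descends up to finite data to a definable group on a subset of some $\mathbb{F}^{n}$, and then invoke the definable case.

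Granting $(\ast)$, the additive group of the hypothetical interpretable field is, up to finite data, a copy of some $\mathbb{Z}^{n}$, contradicting that it is divisible (characteristic $0$) or of prime exponent (characteristic $p$); hence no infinite field is interpretable. To restate the main difficulty: it lies entirely in the imaginary half of $(\ast)$ — ruling out, without full elimination of imaginaries, that the imaginary sorts of $T_{fg}$ contain any "large" abelian group or support any transitive abelian action, which demands a careful study of forking, canonical bases, and the interaction of independence with the JSJ machinery in $T^{\mathrm{eq}}$.
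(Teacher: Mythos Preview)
Your reduction at the start is fine: an interpretable infinite field in a stable theory is connected, and $(K,+)$ is divisible or of prime exponent. But from there on this is a research outline, not a proof, and the outline has real problems.

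First, the target statement $(\ast)$ is ill-posed in your chosen setup. You say you work in a ``sufficiently saturated nonabelian free group $\mathbb{F}$'', but saturated models of $T_{fg}$ are not free groups, and in them centralizers of nontrivial elements are saturated $\Z$-groups, not copies of $\Z$. So ``virtually $\Z^n$'' fails already for centralizers. If instead you work in an actual free group (as the paper does), $(\ast)$ is plausible, but then you cannot freely invoke saturation-based stable group theory.

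Second, even granting the right ambient model, you do not prove $(\ast)$. The ``definable case'' is not ``close to known'' via the sentence you wrote: that abelian subgroups of $\F$ are cyclic says nothing about definable abelian group \emph{structures} on arbitrary definable subsets of $\F^n$, and the vague appeal to JSJ/Merzlyakov/shortening is exactly where the paper does substantial work. For the imaginary case you explicitly say you ``would aim to prove just enough of a geometric elimination of imaginaries''; $T_{fg}$ does not have geometric EI, and hoping to manufacture it for ``abelian configurations'' via forking and canonical bases is speculative, especially since $T_{fg}$ is $n$-ample for all $n$, so the forking geometry near the generic type is as far from one-based as possible.

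For comparison, the paper's route is entirely different and concrete. It uses Sela's explicit weak elimination of imaginaries down to named sorts (conjugation, $m$-cosets, $(m,n)$-double cosets), replaces the interpretable set by its \emph{Diophantine envelope} --- a finite family of towers --- and then runs a dichotomy: either the relevant tuple lives in the \emph{abelian pouch} of each tower, in which case the set is internal to finitely many centralizers, hence one-based by \cite{BySk}, hence not a field; or it does not, in which case one forms an $N$-multiplet of the tower, views it as a star of isomorphic groups with $N$ rays, applies an implicit function theorem to obtain a formal solution for the $N$-fold sum, and shows by a counting argument (Lemmas on orbits of conjugacy/coset/double-coset classes under $S_N$) that permuting the rays produces more distinct basic-sort values than the weak EI bound allows --- contradicting commutativity of the sum. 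None of these ingredients (towers, test sequences, envelopes, star-of-groups permutation counts) appear in your plan, and they are precisely what turns the heuristic into a proof.
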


Since the first-order theory of the free group does not have the finite cover property \cite{MR3846335}, it is enough to prove that no infinite field is interpretable in a particular model. More precisely we prove:

\begin{thmIntro}\label{Major2}
Let $\F$ be a nonabelian free group and $X$ be a set interpretable in $\F$. Then either $X$ is internal to a family of centralizers of nontrivial elements or it cannot be given definably the structure of an Abelian group. 
\end{thmIntro}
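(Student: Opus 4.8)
The plan is to prove the statement in the sharper form: if $X$ is interpretable and admits an $A$-definable abelian group structure $(X,\oplus,0)$ with $A$ finite, then $X$ is internal to a finite set of centralizers. Since $T_{fg}$ is stable we may pass from $\F$ to a sufficiently saturated model $\mathbb{M}\models T_{fg}$; $\mathbb{M}$ is again a group, elementarily equivalent to $\F$, and the internality statement we shall produce transfers back to $\F$ by elementary equivalence. Replacing $X$ by the connected component $X^{0}$ of $(X,\oplus)$ --- a definable finite-index subgroup, at the price of finitely many extra parameters --- we may assume $(X,\oplus)$ is connected; enlarging $A$ to a small elementary submodel $M\preceq\mathbb{M}$ containing it, the group $X$ then has a unique, stationary generic type $p=p_{M}$. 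It now suffices to show that a generic $g\models p$ lies in $\mathrm{dcl}$ of $M$ together with finitely many points of finitely many $M$-definable centralizers: in a connected stable group every element is a sum $g_{1}\oplus g_{2}$ of two generics (write $h=g\oplus(h\ominus g)$ with $g$ generic over $M\cup\{h\}$, so that $h\ominus g$ is generic as well), so such generic internality propagates to internality of $X$ itself.

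The heart of the argument is the analysis of a generic representative inside $\mathbb{M}$ through the structure theory of free groups. Write $X=Y/E$ with $Y\subseteq\mathbb{M}^{n}$ and $E$ both $M$-definable, and fix $\bar c\in\mathbb{M}^{n}$ representing a generic $g\models p$. By the work of Sela and of Perin--Sklinos on homogeneity and on forking, $\tp(\bar c/M)$ is controlled by the cyclic JSJ decomposition of $\langle M,\bar c\rangle$ relative to $\langle M\rangle$, and the type-preserving moves carrying $\bar c$ to realizations of $p$ in general position over it are, up to conjugacy of the rigid part, the modular automorphisms attached to that decomposition. These split into two kinds. The ``cyclic-twist'' part consists of Dehn twists along the cyclic edges of the decomposition and twists along disjoint simple closed curves on the quadratically hanging surface vertices; each of these acts by conjugation by an element of a cyclic subgroup, hence lies inside a centralizer of $\mathbb{M}$, and only finitely many such cyclic subgroups occur, defined over $M$ up to $\mathrm{acl}(M)$ --- the rigid part of a generic tuple being negligible, pinned down up to $\Aut(\mathbb{M}/M)$. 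The ``residual'' part consists of the non-twist mapping classes of the surface vertices and of the mobility carried by free factors of $\langle M,\bar c\rangle$ over $\langle M\rangle$; whenever non-trivial on the relevant locus, this part generates a non-commutative group of automorphisms --- a feature of the free group's geometry, in sharp contrast with $(\mathbb{G}_{a},+)$ inside $\mathrm{ACF}$, and the ultimate reason no field can appear.

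The abelian group law now does the rest. The translations $\tau_{h}\colon g\mapsto h\oplus g$ form an $M$-definable, faithfully $X$-parametrized family of definable bijections of $X$ of full dimension $\dim X$; lifting them to representatives, a generic $\bar c$ acquires a $\dim X$-dimensional family of moves to representatives of other realizations of $p$ in general position, which modulo $E$ and up to the rigid part are modular automorphisms as in the previous paragraph. Since the $\tau_{h}$ pairwise commute, this family induces on $\bar c$ a commutative group of such modular automorphisms, so by the previous paragraph it cannot engage the residual part and must be carried entirely by the cyclic-twist part, i.e.\ by conjugations inside the finitely many cyclic subgroups --- edge groups and curve groups --- appearing in the JSJ of $\bar c$. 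Hence $g$ is recovered, definably over $M$, from finitely many twisting parameters ranging over those cyclic groups, that is, over finitely many $M$-definable centralizers; by the propagation of the first paragraph so is every element of $X$, and therefore $X$ is internal to that finite set of centralizers.

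I expect essentially all of the difficulty to lie in making the last two paragraphs rigorous, and above all in the dictionary they rest on. One must convert the abstract data --- the connected group $(X,\oplus)$, its generic $p$, the translation family, generic and non-forking extensions --- into the concrete data of JSJ decompositions of generic tuples and their modular automorphisms, in a faithful and \emph{definable} way, so that the commutativity of $\oplus$ genuinely pins down the induced modular automorphisms; this is a group-configuration computation carried out inside $\F$ using the classification of definable sets due to Sela and to Kharlampovich--Myasnikov together with the Rips/JSJ machinery. One must also establish the structural input used above --- that the residual mobility of a generic tuple is genuinely non-commutative, equivalently that abelian type-preserving mobility of a generic tuple is forced to be cyclic-twist mobility --- which is precisely where the tree-like geometry of the generic type of $\F$ enters and where the proof cannot proceed by abstract geometric stability theory alone. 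The remaining points --- handling the imaginary $E$ and the passage between $Y$ and $X$, checking that the descent of the internality statement from $\mathbb{M}$ to $\F$ is legitimate, and the compactness argument that only finitely many centralizers occur (using $\dim X<\infty$ and $\dim C=1$ for a centralizer $C$) --- I expect to be comparatively routine.
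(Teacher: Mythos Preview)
Your dichotomy is the right one, and the intuition that non-commutative ``mobility'' of generic tuples obstructs an abelian law is exactly what drives the paper. But the mechanism you propose in the third paragraph has a genuine gap, and I do not think it can be closed along the lines you sketch. You want to lift each translation $\tau_h$ on $X$ to a modular automorphism of the JSJ of a fixed representative $\bar c$, and then deduce from $\tau_h\tau_{h'}=\tau_{h'}\tau_h$ that these lifts commute, forcing them into the cyclic-twist part. Neither step is available. The move $\bar c\mapsto\bar c'$ (with $[\bar c']_E=h\oplus g$) sends $\bar c$ to another realization of $p$, but $\bar c'$ need not lie in $\langle M,\bar c\rangle$, and any automorphism of $\mathbb{M}$ over $M$ carrying $\bar c$ to $\bar c'$ is well-defined only up to the enormous group $\Aut(\mathbb{M}/M\bar c)$; there is no canonical element of the modular group of a fixed JSJ attached to it. Worse, the commutation $\tau_h\tau_{h'}=\tau_{h'}\tau_h$ lives on $X=Y/E$: even granting lifts $\sigma_h,\sigma_{h'}$, all you get is that $\sigma_h\sigma_{h'}(\bar c)$ and $\sigma_{h'}\sigma_h(\bar c)$ are $E$-equivalent, which says nothing about $\sigma_h$ and $\sigma_{h'}$ commuting as automorphisms. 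The ``dictionary'' you invoke --- a faithful, definable passage from the translation family on $X$ to a commuting family inside the modular group of one JSJ --- is precisely what is missing, and the paper does not build one.

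The paper avoids this problem entirely: it never tries to realize translations as automorphisms. Instead it passes from $X$ to the real-sort formula $\psi(\bar y)$ via Sela's $R_E$, takes its Diophantine Envelope (finitely many towers $\mathcal{T}(G,\F)$ with test sequences landing in $\psi$), and splits on whether the designated tuple $\bar y$ depends only on the abelian pouch of each tower. If so, a direct computation gives internality to finitely many centralizers. If not, one fixes such a tower, forms its $N$-multiplet $\mathcal{T}_N$ (a star of isomorphic groups with $N$ rays over the abelian pouch), and applies an implicit-function theorem to the formula ``$\bar z$ is an $R_E$-code for $x_1\oplus\cdots\oplus x_N$'': this produces a \emph{formal solution} $\bar w$, an actual tuple in (a closure of) $G_N$, whose images under test sequences are genuine $R_E$-codes for the $N$-fold sum. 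One checks $\bar w$ does not sit in the abelian pouch. Permuting the $N$ rays permutes the summands and hence, by commutativity of $\oplus$, fixes the sum in $X$ --- so fixes a set of at most $m\cdot\ell$ basic-sort codes. But combinatorial lemmas about conjugacy, coset and double-coset classes in a star of groups show the $S_N$-orbit of $\bar w$ hits roughly $N$ distinct basic classes, and these distinctions persist under test sequences. Taking $N$ large enough gives the contradiction. The commutativity of $\oplus$ is thus used only \emph{after} the sum has been materialized as a concrete word in a concrete group, which is what your approach is missing.
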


Theorem \ref{Major2} above implies that no infinite field is interpretable in a nonabelian free group. Indeed, by \cite{MR2039343}, a set internal to a family of $1$-based sets is $1$-based itself and an 
infinite $1$-based set cannot be given definably the structure of a field \cite{MR1782132}. 
In \cite{MR4030181}, on the course of proving that no infinite field is definable in the first-order theory of the free group, we proved that centralizers of nontrivial elements are $1$-based, hence 
no set internal to a family of centralizers can define an infinite field. 

Theorem \ref{Major1} now follows easily, since all axioms, for a triple of formulas $(\phi, \odot, \oplus)$, that define a field are first-order expressible and the first-order theory of the free group eliminates the "there exists infinitely many" quantifier in all imaginary sorts. Indeed, if an infinite field were interpretable in some model of the first-order theory of the free group, we could transfer this interpretation to a standard model, i.e. a nonabelian free group, contradicting Theorem \ref{Major2}. In fact, Theorem \ref{Major2} provides a characterization of Abelian groups interpretable in the free group.  

\begin{thmIntro}
Let $(X, \oplus)$ be an Abelian group interpretable in the free group. Then $X$ is internal to a family of centralizers of nontrivial elements. 
\end{thmIntro}

We note that the centralizer $(C_\mathbb{M}(b),\cdot)$ of a nontrivial element $b$ of the monster model $\mathbb{M}$ is elementarily equivalent to $(\Z, +)$.

As one might expect a crucial tool for passing from the nondefinability to the noninterpretability of infinite fields is some form of elimination of imaginaries. The first-order theory of the free group (weakly) eliminates imaginaries after adding some sorts \cite{SelaIm}. 

Interestingly the first-order theory of the free group is $n$-ample for all $n$ \cite{AmpleOK} \cite{AmpleRizos}, hence our result yields the first example of an ample theory that interprets an infinite group but not an infinite field. An example of an ample theory that does not interpret an infinite field 
had been constructed in \cite{AmpleDividing}. The latter theory does not interpret an infinite group, but still forking independence is nontrivial.   
\ \\ \\
{\bf Acknowledgements:} I would like to express my heartfelt gratitude to the referee for their careful reading and insightful comments on this work. Their attention to detail and thoughtful suggestions have greatly enhanced the quality of this paper. 

I would also like to thank Zlil Sela for patiently explaining several constructions   intertwined throughout his work on the Tarski problem.  


\section{Overview}

In the first-order theory of the free group any first-order formula is equivalent to a boolean combination of $\forall\exists$-formulas. Despite 
the quantifier elimination it is hard to understand these basic definable sets. Still some special classes of formulas are easy to understand and 
even enlightening. Here is an argument that allows us to gain some intuition. A key result that makes the proof work is the following refinement of the elementary equivalence of nonabelian free groups. 

\begin{fact}[Sela/Kharlampovich-Myasnikov]
The following chain of free groups under the natural embeddings is elementary:
$$\F_2<\F_3<\ldots<\F_n<\ldots$$
\end{fact} 

\begin{propositionIntro}[The noncommutativity argument]\label{NonCommut}
Let $X:=\phi(\bar{x})$ be a definable set over $\F_n$. Suppose $\phi(\F_n)\neq\phi(\F_{n+1})$. 
Then $X$ cannot be given definably the structure of an Abelian group. 
\end{propositionIntro}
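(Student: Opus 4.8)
The plan is to argue by contradiction, converting the hypothetical abelian group law into a source of \emph{Galois invariance} that clashes with the noncommutativity of free groups. Suppose $X=\phi(\bar x)$, with $k:=|\bar x|$ and parameters inside $\F_n$, carries a definable abelian group structure $(X,+)$. Since the assertion ``$\phi(\bar x)$ admits a definable abelian group operation'' is expressible by a first-order sentence over the parameters of $\phi$, and the standard inclusions $\F_n\preceq\F_{n+1}\preceq\cdots$ are elementary, I may assume that $+$ is itself definable over $\F_n$; then $(X(\F_m),+)$ is a subgroup of $(X(\F_{m'}),+)$ whenever $2\le m\le m'$, and $\phi(\F_m)=\phi(\F_{m'})\cap\F_m^{\,k}$. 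In particular, $\phi(\F_n)\ne\phi(\F_{n+1})$ provides a tuple $\bar a\in\phi(\F_{n+1})\setminus\F_n^{\,k}$; writing $\F_{n+1}=\F_n\ast\langle u\rangle$, some coordinate $a_{j_0}$ of $\bar a$ is a reduced word that genuinely involves the letter $u$.

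The second step produces ``independent twisted copies'' of $\bar a$. Pass to $\F_{n+3}=\F_n\ast\langle r\rangle\ast\langle s\rangle\ast\langle t\rangle$, and for a word $v$ set $\bar a(v):=h_v(\bar a)$, where $h_v\colon\F_{n+1}\to\F_{n+3}$ is the homomorphism fixing $\F_n$ with $u\mapsto v$. For $v\in\{r,s,t\}$ the subgroup $\langle\F_n,v\rangle$ is a free factor of rank $n+1\ge 2$, hence elementarily embedded, and $h_v$ is an isomorphism onto it, so $\bar a(r),\bar a(s),\bar a(t)\in\phi(\F_{n+3})=X(\F_{n+3})$. Consider the symmetric sums $\bar\gamma:=\bar a(r)+\bar a(s)+\bar a(t)$ and $\bar\beta:=\bar a(s)+\bar a(t)$. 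As $+$ is $\F_n$-definable, every automorphism of $\F_{n+3}$ that fixes $\F_n$ pointwise commutes with $+$; since $+$ is commutative, $\bar\gamma$ is therefore invariant under the automorphism cyclically permuting $r,s,t$, and $\bar\beta$ — which, being a $+$-sum of two tuples from the elementarily embedded free factor $\langle\F_n,s,t\rangle$, is itself a tuple from that factor — is invariant under the transposition $s\leftrightarrow t$ of $\langle\F_n,s,t\rangle$.

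The crux is now a fixed-subgroup computation: if $F=A\ast B$ with $B$ free on a finite basis, and $\sigma\in\Aut(F)$ restricts to the identity on $A$ and to a fixed-point-free permutation of a basis of $B$, then $\Fix(\sigma)=A$. Indeed such a permutation has no fixed point on the set of basis letters and their inverses, so $\Fix(\sigma\restriction B)=\{1\}$; and if $g\in\Fix(\sigma)$, then writing $g$ in $A\ast B$-normal form, $\sigma$ preserves the syllable pattern, so uniqueness of normal form forces each syllable of $g$ to be $\sigma$-fixed — a nontrivial syllable from $B$ would lie in $\Fix(\sigma\restriction B)=\{1\}$, impossible — whence every syllable lies in $A$ and $g\in A$. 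Applying this with $(A,B)=(\F_n,\langle r,s,t\rangle)$ and the $3$-cycle yields $\bar\gamma\in\F_n^{\,k}$; applying it with $(A,B)=(\F_n,\langle s,t\rangle)$ inside $\langle\F_n,s,t\rangle$ yields $\bar\beta\in\F_n^{\,k}$. Hence $\bar\beta,\bar\gamma\in\phi(\F_{n+3})\cap\F_n^{\,k}=\phi(\F_n)=X(\F_n)$, and since $X(\F_n)$ is a subgroup of $(X(\F_{n+3}),+)$ we get $\bar a(r)=\bar\gamma-\bar\beta\in X(\F_n)\subseteq\F_n^{\,k}$. But $a_{j_0}(r)=h_r(a_{j_0})$ is, by injectivity of $h_r$ and the choice of $a_{j_0}$, a reduced word of $\F_{n+3}$ genuinely involving $r$, so $\bar a(r)\notin\F_n^{\,k}$ — a contradiction.

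The step I expect to be the real obstacle is the interface between the model theory and combinatorial group theory. One must carefully justify (i) that the group law can be taken over a parameter set inside $\F_n$, so that $(X(\F_n),+)\le(X(\F_{n+3}),+)$, which rests on elementarity of the standard embeddings of free groups; (ii) the fixed-subgroup lemma — elementary, but where essentially all of the concrete verification sits, and whose content is precisely that a nontrivial basis permutation fixes no nonempty reduced word, i.e.\ that the ambient free group is noncommutative; and (iii) that the twisted copy $\bar a(r)$ genuinely escapes $\F_n^{\,k}$, which is exactly where the hypothesis $\phi(\F_n)\ne\phi(\F_{n+1})$ is used. The rest is routine bookkeeping.
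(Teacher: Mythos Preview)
Your proof is correct and rests on the same idea as the paper's: produce independent copies of the ``new'' tuple by substituting fresh primitives, use commutativity of $+$ to make a symmetric sum invariant under a permutation automorphism of those primitives, and then invoke the fact that such a permutation has fixed subgroup exactly $\F_n$ to reach a contradiction. The organization differs slightly. The paper uses only two copies $\bar a,\bar a'$ (built from $e_{n+1},e_{n+2}$), first argues that $\bar a\odot\bar a'\notin\F_{n+1}$ (else $\bar a'\in\F_{n+1}$), and then observes that the transposition $e_{n+1}\leftrightarrow e_{n+2}$ must fix this sum while moving any word that genuinely involves $e_{n+2}$ --- the same fixed-subgroup fact you isolate and prove as a lemma. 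You instead take three copies and force \emph{two} symmetric sums $\bar\gamma,\bar\beta$ into $\F_n$, then subtract to land $\bar a(r)$ itself in $\F_n$. Your route is a little longer but has the virtue of making the fixed-subgroup step completely explicit, and of spelling out why the group law may be taken over $\F_n$ (a point the paper leaves implicit).
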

\begin{proof}
Suppose, for the sake of contradiction, that $(X,\odot)$ (where $\odot:=\psi(\bar{x},\bar{y},\bar{z})$ 
restricts to a group operation on $X$) is an Abelian group. We work in $\F_{\omega}=\langle e_1, e_2, \ldots, e_n, \ldots\rangle$. Let $\bar{a}\in X$ be a tuple 
that $X$ gains in $\F_{n+1}$, then $\bar{a}\in \F_{n+1}\setminus \F_n$. We fix a tuple of words 
$\bar{a}(x_1,\ldots,x_{n+1})$ in variables $x_1,\ldots,x_{n+1}$, such that 
$\bar{a}(e_1,\ldots,e_{n+1})=\bar{a}$. It is easy to see that $\bar{a}':=\bar{a}(e_1,\ldots,e_n,e_{n+2})$ 
belongs to $X$ and $\bar{a}'\in \F_{n+2}\setminus \F_{n+1}$. We then have that  
$\bar{a}\odot\bar{a}'$ is a tuple in $\F_{n+2}\setminus \F_{n+1}$, if not then $\bar{a}'$ is a 
tuple in $\F_{n+1}$, a contradiction. So, let $\bar{a}\odot\bar{a}'=\bar{w}(e_1,\ldots,e_{n+1},e_{n+2})$. We 
consider the automorphism, $f\in Aut(\F_{n+2}/\F_n)$, exchanging $e_{n+1}$ with $e_{n+2}$, it is clear that 
$f(\bar{a})=\bar{a}'$, $f(\bar{a}')=\bar{a}$ and $f$ fixes $\odot$. Thus, we have $f(\bar{a})\odot f(\bar{a}')=
\bar{a}'\odot\bar{a}$ and as $(X,\odot)$ is Abelian we have $\bar{a}'\odot\bar{a}=\bar{w}(e_1,\ldots,e_{n+1},e_{n+2})$. 
But, $f(\bar{a})\odot f(\bar{a}')=f(\bar{w}(e_1,\ldots,e_{n+1},e_{n+2}))=\bar{w}(e_1,\ldots,e_{n+2},e_{n+1})$ and 
as $\bar{w}(e_1,\ldots,e_{n+1},e_{n+2})$ is in $\F_{n+2}\setminus \F_{n+1}$ we have that 
$\bar{w}(e_1,\ldots,e_{n+2},e_{n+1})\neq \bar{w}(e_1,\ldots,$ $e_{n+1},e_{n+2})$, a contradiction. 
\end{proof}

Of course, when we talk about interpretable sets we also have to deal with imaginary sorts. Sela \cite{SelaIm} proved a (weak) elimination of imaginaries up to adding some sorts. Essentially these sorts are: A sort 
for cosets of centralizers with conjugation, a sort for double cosets of centralizers with conjugation and refinements of those (see Section \ref{MT}). 

There are two major predicaments in generalizing the above argument. First, there is a class of formulas that define Abelian groups, namely centralizers of nontrivial elements. For this class of 
formulas we need to find a different argument. Second, even for formulas that gain an element, like in the above proposition, the noncommutativity argument fails for conjugacy classes. Consider, 
for a simple counterexample, that the sum is given by the word $e_1e_2$, then exchanging $e_1$ with $e_2$ gives $e_2e_1$ which is in the same conjugacy class as $e_1e_2$. In this latter case 
we can fix the problem by taking more than two summands. Permuting the primitive elements that define the different summands will increase the number of conjugacy classes of the resulting sum and thus obtain a contradiction (see \cite{MR3426232}). 

The way we deal with centralizers is by proving that they are pure groups, i.e. any definable subset of a cartesian power $C_{\F}(a)\times C_{\F}(a)\times\ldots\times C_{\F}(a)$ is definable by multiplication alone (see \cite[Corollary 6.28]{MR4030181}).  
By a theorem of Wagner \cite{MR2039343}, the latter result takes care of all interpretable sets internal to a family of centralizers. 

\paragraph{Strategy:}

We first replace the interpretable set by its {\em Diophantine envelope}. The Diophantine envelope is a finite family of {\em towers}. For every such tower there is a Diophantine definable set naturally corresponding to it. 

The Diophantine envelope has the following properties. The union of sets that correspond to the 
Diophantine envelope contain the original interpretable set and moreover for particular sequences of elements, called {\em test sequences}, one may decide whether they eventually belong to the set or not. At this point the proof splits. 
If the only used part of the towers is their {\em Abelian pouch}, then the set is internal to a set of centralizers of nontrivial elements hence it cannot be the domain of a field. If not, then we can proceed applying a tweaked version of the noncommutativity argument that we describe next.

From a tower in the Diophantine envelope, 
we construct an {\em $N$-multiplet of towers} for which we can decide which test sequences belong to the $N$ cartesian product of the original set. The $N$-multiplet of towers still has the structure of a tower and it can be 
seen as a {\em star of groups} with $N$ rays.

We next apply the {\em implicit function theorem} on the $N$-multiplet of towers and the 
formula that defines the $N$-summation. The implicit function theorem gives an element in the $N$-multiplet of towers that its image under certain test sequences is the value of the $N$-summation. 
Permuting the rays of the $N$-multiplet of towers is the analogue of permuting the primitive elements in the noncommutativity argument. Hence choosing $N$ large enough we get enough different 
values for the $N$-summation and this finally shows that the interpretable set cannot be given an Abelian group structure. 

\paragraph{Paper structure:}

In Section \ref{MT} we explain some model theoretic concepts, namely internality, definable closure, imaginaries and their relative elimination in the first-order theory of the free group. 

In Section \ref{StarofGroups} we define a special type of graph of groups that we call a star of groups because the underlying graph is a star. We prove theorems about the basic imaginary sorts in stars of groups. In particular we show how the equivalence classes change as we permute the rays.  

Section \ref{Towers} contains the construction of towers. Towers, their multiplets and closures are the main tools to partially understand definable sets. 

In Section \ref{TestSequences} we define the notion of a test sequence. We state Sela's Diophantine envelope theorem, the three levels of towers theorem and generalizations of the implicit function theorem. 

Finally, in the last section we prove the main theorem of the paper as well as some special cases that are free of certain technicalities.  


\section{Some model theory}\label{MT}

\subsection{Imaginaries}\label{Imaginariess}

We fix a first-order structure $\mathcal{M}$ and we are interested in the collection of definable sets in $\mathcal{M}$, i.e. all subsets 
of some cartesian power of $\mathcal{M}$ which are the solution sets of first-order formulas. We motivate the definition 
of imaginaries with the following question: Suppose $X$ is a definable set in $\mathcal{M}$, is there a canonical way to 
define $X$, i.e. is there a tuple $\bar{b}$ and a formula $\psi(\bar{x},\bar{y})$ such that $\psi(\mathcal{M},\bar{b})=X$ 
but for any other $\bar{b}'\neq \bar{b}$, $\psi(\mathcal{M},\bar{b}')\neq X$? 

To give a positive answer to the above mentioned question one has to move to a multi-sorted expansion of 
$\mathcal{M}$ called $\mathcal{M}^{eq}$. This expansion is constructed from $\mathcal{M}$ 
by adding a new sort for each $\emptyset$-definable equivalence relation, $E(\bar{x},\bar{y})$, together 
with a class function $f_E:M^n\rightarrow M_E$, where $M_E$ (the domain of the new sort corresponding to $E$) 
is the set of all $E$-equivalence classes. The elements in these new sorts are called {\em imaginaries}. 
In $\mathcal{M}^{eq}$, one can assign to each definable set a canonical parameter in the sense discussed above. 
Moreover, every formula in the multi-sorted language corresponds to a formula in the original language in the following sense (see \cite[Chapter 1, Section 1]{MR1429864} for more details). 

\begin{fact}
Let $\phi(x_1, x_2, \ldots, x_k)$ be a formula in the multi-sorted language $\mathcal{L}^{eq}$, where $x_i$ is a variable of the sort $S_{E_i}$. 
Then there exists an $\mathcal{L}$-formula $\psi(\bar{y}_1, \bar{y}_2, \ldots, \bar{y}_k)$ such that:
$$\mathcal{M}^{eq}\models \forall \bar{y}_1, \bar{y}_2, \ldots, \bar{y}_k(\phi(f_{E_1}(\bar{y}_1), f_{E_2}(\bar{y}_2), \ldots, f_{E_k}(\bar{y}_k))\leftrightarrow \psi(\bar{y}_1, \bar{y}_2, \ldots, \bar{y}_k)) $$ 
\end{fact}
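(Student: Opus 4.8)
The plan is to unwind the definition of $\mathcal{M}^{eq}$ and argue by induction on the complexity of the multi-sorted formula $\phi$. First I would handle atomic formulas: the sorts occurring in $\mathcal{L}^{eq}$ are either the home sort $M$ or quotient sorts $M_E$, and the only new primitive relations are the graphs of the class maps $f_E$ together with equality on each $M_E$. For a formula of the form $f_{E_i}(\bar{y}_i) = f_{E_j}(\bar{y}_j)$ (which forces $E_i = E_j = E$ by sort-correctness), this is by definition equivalent to $E(\bar{y}_i,\bar{y}_j)$, an $\mathcal{L}$-formula. More generally any atomic $\mathcal{L}^{eq}$-formula, after substituting the terms $f_{E_i}(\bar{y}_i)$, pulls back to an $\mathcal{L}$-formula because $f_E$ is $\emptyset$-definable (it is a class map, so $f_E(\bar z) = w$ is coded by $E$ together with the identification of representatives); one should also treat mixed atomic formulas relating home-sort and imaginary-sort variables, but since we are substituting $f_{E_i}(\bar y_i)$ for every variable, each argument slot is controlled by some $\emptyset$-definable function into a quotient.

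For the inductive step, boolean connectives are immediate: if $\phi_1$ pulls back to $\psi_1$ and $\phi_2$ to $\psi_2$, then $\neg\phi_1$ pulls back to $\neg\psi_1$ and $\phi_1\wedge\phi_2$ to $\psi_1\wedge\psi_2$, and similarly the biconditional claimed in the statement follows formula-by-formula. The interesting case is the existential quantifier. Suppose $\phi(\bar x) = \exists z\, \chi(\bar x, z)$ where $z$ ranges over a sort $S_E$ for some $\emptyset$-definable equivalence relation $E$ on $M^n$. By the induction hypothesis applied to $\chi$, after substituting class maps for all the free variables (including $z \mapsto f_E(\bar w)$ for a fresh tuple $\bar w$ of home-sort variables) we obtain an $\mathcal{L}$-formula $\theta(\bar y_1,\ldots,\bar y_k,\bar w)$ equivalent to $\chi(f_{E_1}(\bar y_1),\ldots,f_{E_k}(\bar y_k),f_E(\bar w))$. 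Since $f_E$ is surjective onto $M_E$, quantifying $\exists z$ over $S_E$ is the same as quantifying $\exists \bar w$ over $M^n$ in the home sort: I would set $\psi(\bar y_1,\ldots,\bar y_k) := \exists \bar w\, \theta(\bar y_1,\ldots,\bar y_k,\bar w)$, and check that $\mathcal{M}^{eq}\models \phi(f_{E_1}(\bar y_1),\ldots,f_{E_k}(\bar y_k)) \leftrightarrow \psi(\bar y_1,\ldots,\bar y_k)$ using surjectivity of $f_E$ in both directions.

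The main obstacle, such as it is, is bookkeeping rather than genuine difficulty: one must be careful that the language $\mathcal{L}^{eq}$ may have been built by iterating the $\mathrm{eq}$-construction (quotients of sorts that are themselves quotients), so the base case really needs the observation that every sort of $\mathcal{M}^{eq}$ is, up to $\emptyset$-definable bijection, a quotient $M^n/E$ of a power of the home sort by a single $\emptyset$-definable equivalence relation $E$, with a $\emptyset$-definable surjection from $M^n$; this is the standard reduction and lets the induction go through with the class maps $f_E$ as the only new function symbols to eliminate. I would also remark that the statement as phrased only asserts the biconditional for formulas all of whose free variables are of quotient sorts; the case of free home-sort variables is subsumed by taking $E$ to be equality, so no separate treatment is needed. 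The conclusion then follows by induction on the structure of $\phi$.
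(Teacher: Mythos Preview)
Your argument is correct and is the standard proof of this well-known fact. The paper does not supply a proof at all: the statement is recorded as a \textbf{Fact} in Section~\ref{MT} and is treated as background from basic model theory (see any textbook treatment of $\mathcal{M}^{eq}$, e.g.\ Tent--Ziegler or Hodges). Your induction on formula complexity, using that $f_E(\bar y)=f_E(\bar y')$ unwinds to $E(\bar y,\bar y')$ for the atomic case and surjectivity of $f_E$ for the quantifier case, is exactly how one proves it. The remark about iterating the eq-construction is more caution than is needed here---in the paper's setup $\mathcal{M}^{eq}$ is built in one step by adjoining quotients $M^n/E$ for all $\emptyset$-definable $E$ on powers of the home sort---but it does no harm.
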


We will call the formula $\psi(\bar{y})$ in the above fact, the {\em real formula corresponding to $\phi(\bar{x})$}.

A set is {\em interpretable} in $\mathcal{M}$ if it is definable up to a definable equivalence relation. Actually, understanding interpretable sets in $\mathcal{M}$ is equivalent to understanding definable sets in $\mathcal{M}^{eq}$.

\begin{fact}
A set is interpretable in $\mathcal{M}$ if and only if it is definable in $\mathcal{M}^{eq}$.
\end{fact}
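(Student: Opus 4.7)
The plan is to prove both directions by unwinding the definitions of interpretability and of the sorts in $\mathcal{M}^{eq}$, using the previous Fact as the bridge that converts between formulas of $\mathcal{L}^{eq}$ and formulas of $\mathcal{L}$. Recall that to say $X$ is interpretable in $\mathcal{M}$ means there is a definable set $Y\subseteq M^n$, a definable equivalence relation $E$ on $Y$, and a bijection $X\leftrightarrow Y/E$ (possibly extending $E$ trivially outside $Y$, we may as well assume $E$ is $\emptyset$-definable on all of $M^n$, at the cost of adding parameters to the defining formula).

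For the direction interpretable $\Rightarrow$ definable in $\mathcal{M}^{eq}$, I would argue as follows. Given $(Y,E)$ witnessing interpretability of $X$, the set $Y$ is cut out by some $\mathcal{L}$-formula $\chi(\bar{y})$, and $E$ yields a sort $S_E$ together with the class function $f_E:M^n\to M_E$ in $\mathcal{M}^{eq}$. Identifying $X$ with $Y/E\subseteq M_E$, one sees that
\[
X \;=\; \{\, z\in S_E \,:\, \mathcal{M}^{eq}\models \exists \bar{y}\,(\chi(\bar{y})\wedge f_E(\bar{y})=z)\,\},
\]
which exhibits $X$ as defined by an $\mathcal{L}^{eq}$-formula in the sort $S_E$.

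For the converse direction, suppose $X$ is defined in $\mathcal{M}^{eq}$ by a formula $\phi(x_1,\ldots,x_k)$ in sorts $S_{E_1}\times\cdots\times S_{E_k}$. Apply the previous Fact to $\phi$ to obtain an $\mathcal{L}$-formula $\psi(\bar{y}_1,\ldots,\bar{y}_k)$ such that $\psi$ defines exactly the preimage of $X$ under $f_{E_1}\times\cdots\times f_{E_k}$. Setting $Y:=\psi(\mathcal{M})$ and $E:=E_1\times\cdots\times E_k$ (which is $\emptyset$-definable in $\mathcal{M}$ since each $E_i$ is), one has $X\cong Y/E$ via the quotient map, and $Y$ is saturated under $E$ by the pullback construction. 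Hence $X$ is interpretable in $\mathcal{M}$.

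I do not expect a real obstacle here: the statement is a direct translation between two standard formalisms, and the only substantive content, namely the ability to convert back and forth between $\mathcal{L}^{eq}$-formulas and $\mathcal{L}$-formulas on classes, has been handed to us by the preceding Fact. The only minor care point is to make sure that in the forward direction the codomain sort of $X$ is a single sort of $\mathcal{M}^{eq}$ (so that we can view $X$ as a subset of a sort), which is handled by taking $E$ to be the equivalence relation on $M^n$ (not on a product of already-imaginary sorts) associated to the interpretation; if one starts from an interpretation whose domain already involves imaginary sorts, one iterates the argument or uses the fact that $(\mathcal{M}^{eq})^{eq}=\mathcal{M}^{eq}$.
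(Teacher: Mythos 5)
Your argument is correct and is the standard textbook proof of this equivalence; the paper itself states this as a background Fact without proof, so there is nothing to compare against beyond noting that your two directions (quotienting a definable set into a sort, and pulling an $\mathcal{L}^{eq}$-formula back along the class maps via the preceding Fact) are exactly the intended content. The only point worth keeping in mind is the one you already flag: absorbing parameters of the equivalence relation into the tuples so that the relevant sort is genuinely $\emptyset$-definable, which is routine.
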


We now specialize to the first-order theory of the free group. Following Sela \cite{SelaIm} we define some families of equivalence relations that will be used to eliminate the rest. 

\begin{definition}\label{Imaginaries}
Let $\F$ be a nonabelian free group. The following equivalence relations in $\F$ are called elementary.
\begin{itemize}
 \item[$(i)$] (Cosets with conjugation) $ _cE_2((a_1,b_1),(a_2,b_2))$ if either $b_1=b_2=1$ or $b_1\neq 1$ and there exists $g\in \F$ such that 
$$(a_1\cdot C_{\F}(b_1))^g=a_2\cdot C_{\F}(b_2) $$ 
 \item[$(ii)$] (Double cosets with conjugation) $_cE_3((a_1,b_1,c_1),(a_2,b_2,c_2))$ if either 
$a_1=a_2=1$ or $c_1=c_2=1$ or $a_1,c_1\neq 1$ and there exists $g\in\F$ such that 
  $$(C_{\F}(a_1) \cdot b_1 \cdot C_{\F}(c_1))^g= C_{\F}(a_1) \cdot b_2 \cdot C_{\F}(c_1)$$
\end{itemize}

\end{definition}

We denote classes that correspond to $_cE_2$ by $_c[(x,y)]_2$ and classes that correspond to $_cE_3$ by $_c[(x,y,z)]_3$. Note that in each elementary equivalence relation we have a class that has different nature from the rest. We will call these classes trivial. The trivial classes are $_c[(1,1)]_2=\{(a,1) \ | \ a\in \F\}$ in $_cE_2$ and $_c[(1,1,1)]_3=\{(1,a,b)\ | \ a,b \in \F\}\cup \{(a,b,1)\ | \ a,b \in \F\} $ in $_cE_3$. 


Finer equivalence relations may also be definable (and not eliminable by the elementary ones). We give some examples. If $A$ is an Abelian group and $m$ a natural number, we denote by $A^m$ the subgroup $\{a^m \ | \ a\in A\}$. In addition, if $H$ is a subgroup of $G$ and $g\in G$, then $H^g$ denotes $gHg^{-1}$. 

\begin{example}\label{BasicExamples}\
\begin{itemize}
\item {\em Commuting conjugation}, denoted $E_1(a,b)$, is defined as follows: either $a=b=1$ or $a,b\neq 1$ and there is $g\in \F$ such that $a^g$ commutes with $b$. Commuting conjugation is finer than coset conjugation in the sense that if $E_1(a,b)$, then $_cE_2((1,a),(1,b))$. 
\item {\em Conjugation}, denoted by $E_1^1$, is finer than commuting conjugation. 
\item The family of {\em cosets of powers with conjugation}, denoted $_cE_2^m((a_1,b_1), (a_2,b_2))$, where $m$ is an integer, is defined as follows: either $b_1=b_2=1$ or $b_1,b_2\neq 1$ and there exists $g\in\F$ such that 
$(a_1\cdot C_{\F}(b_1)^m)^g=a_2\cdot C_{\F}(b_2)^m$. If $_cE_{2}^m((a_1,b_1),(a_2,b_2))$, then $_cE_2((a_1,b_1),(a_2,b_2))$.
\item The family of {\em cosets of powers}, denoted $E_2^m((a_1,b_1),(a_2,b_2))$, where $m$ is an integer, is defined as follows: either $b_1=b_2=1$ or $b_1,b_2\neq 1$ and 
$a_1\cdot C_{\F}(b_1)^m=a_2\cdot C_{\F}(b_2)^m$. If $E_{2}^m((a_1,b_1),(a_2,b_2))$, then $_cE_2^m(a_1,b_1),(a_2,b_2))$. It is a finer equivalence relation than that of cosets of powers with conjugation (with the same $m$).
\item Similarly, {\em double cosets of powers with conjugation}, and {\em double cosets of powers} denoted $^m _cE_3^n((a_1,b_1,c_1),(a_2,b_2,c_2))$ and $^mE_3^n((a_1,b_1,c_1),(a_2,b_2,c_2))$ respectively, are finer than double cosets with conjugation.
\item The family of {\em bilateral translates with conjugation}, denoted $^m _cE_4^n((a_1,b_1),(a_2,b_2))$, where $m,n$ are integers, is defined as follows: either $b_1=b_2=1$ or $b_1,b_2\neq 1$ and there is $g\in\F$ such that $[b_1^g,b_2]=1$ and there are  $\gamma\in C_{\F}(b_1)$ and $\delta\in C_{\F}(b_2)$ such that $(\gamma^na_1\gamma^m)^g=\delta^na_2\delta^m$. If $^m _cE_4^n((a_1,b_1),(a_2,b_2))$, then $^m _cE_3^n((a_1,b_1,a_1),(a_2,b_2,a_2))$. It is a finer equivalence relation than that of double cosets of powers with conjugation (with the same corresponding $m$ and $n$).   
\item The family of {\em bilateral translates}, denoted $^m E_4^n((a_1,b_1),(a_2,b_2))$, where $m,n$ are integers, is defined as follows: either $b_1=b_2=1$ or $b_1,b_2\neq 1$ and $[b_1,b_2]=1$, and there is $\gamma\in C_{\F}(b_1)$ such that $\gamma^na_1\gamma^m=a_2$. If $^m E_4^n((a_1,b_1),(a_2,b_2))$, then $^mE_3^n((a_1,b_1,a_1),(a_2,b_2,a_2))$.
\item We can generalize the equivalence relations that involve conjugation, i.e. $E_1,\ E_1^1, \ ^m _cE_2^n,$ $ \ ^m _cE_3^n,$ $^m _cE_4^n$ by considering finite tuples and partitioning them in singletons, couples, and triplets in a way that each such subtuple corresponds to some relation from this list. In addition, we require that the conjugating element is identical for all relations. We call each relation in this family of relations generalized equivalence relations and denote them by $E_{\{g\}}$ without any other qualitative index. These equivalence relations are finer than the corresponding product of relations that take part in their definitions where conjugation may use different elements. 

We give an example. Suppose $\bar x=x_1 x_2x_3$ and $\bar x$ splits as follows: $x_1$ corresponds to $E_1$, and $(x_2, x_3)$ corresponds to $_cE_2$. We define $E_{\{g\}}(\bar x, \bar y)$ if there exists $g$ such that [either $x_1=y_1=1$ or $x_1,y_1\neq 1$ and $[x_1^g,y_1]=1$] and [either $x_3=y_3=1$ or $x_3,y_3\neq 1$ and $(x_2 C(x_3))^g=y_2C(y_3)$]. If $E_{\{g\}}((a_1, a_2, a_3), (b_1, b_2, b_3))$, then $E_1\times _cE_2((a_1, a_2, a_3), (b_1, b_2, b_3))$. Also note that the latter relation can be eliminated by its factors, i.e. $E_1, \ _cE_2$.  

\end{itemize}

\end{example}

A first-order formula, $\phi(\bar x)$, in the language of groups is {\em positive existential} if it has the following form $\exists \bar y\bigl(\bigvee_{i=1}^n (w_1^i(\bar y, \bar x)=1\land\ldots\land w_{k_i}^i(\bar y, \bar x)=1)\bigr)$. Note that all the above equivalence relations can be expressed by a positive existential formula after removing, in each case, the trivial class. For example for the equivalence relation of cosets, $E_2$, we have in $\F^2\setminus \{(a,1) \ | \ a\in \F\}$,  $E_2(x_1, x_2, y_1, y_2):=\exists y \bigl([x_2, y_2]=1\land [y,x_2]=1 \land x_1y=x_2\bigr)$. 

\begin{definition}\label{BasicER}
Let $\F$ be a nonabelian free group. An equivalence relation is called basic if it is one of the following: equality (on tuples of elements), $E_2^m,\ ^mE_3^n,\ ^mE_4^n$ for integers $m,n$, and $E_{\{g\}}$ for any generalized equivalence relation.
\end{definition}

All basic equivalence relations make sense in any commutative transitive group. In particular, they make sense in limit groups. 

\begin{remark}
It is not hard to see that $(a_1, b_1)$, $(a_2,b_2)$ are $_c E_2^m$-equivalent in a free group $\F$, if and only if there exists $g\in \F$ such that $[b_1^g,b_2]=1$ and $a_1^g\cdot C_{\F}(b_2)^m=a_2\cdot C_{\F}(b_2)^m$. Similarly, $(a_1,b_1,c_1)$, $(a_2,b_2,c_2)$ are $_c^m E_3^n$-equivalent if and only if there exists $g\in \F$ such that $[a_1^g,a_2]=1$ and $[c_1^g,c_2]=1$ and $C_{\F}(a_2)^m\cdot b_1^g\cdot C_{\F}(c_2)^n=C_{\F}(a_2)^m\cdot b_2\cdot C_{\F}(c_2)^n$. The same holds for limit groups. 
\end{remark}

Sela proved the following theorem concerning imaginaries in nonabelian free groups (see \cite[Theorem 4.4]{SelaIm}).

\begin{theorem}\label{Elim}
Let $\F$ be a nonabelian free group. Let $E(\bar{x},\bar{y})$ be a definable equivalence relation in $\F$, with $\abs{\bar{x}}=m$.
Then there exist $k,\ell<\omega$ and a definable relation:
$$R_E \subseteq \F^m \times S_1(\F) \times \ldots \times S_{\ell}(\F)$$
such that:
\begin{itemize}
 \item[(i)] each $S_i(\F)$ is one of the basic sorts;
 \item[(ii)] for each $\bar{a}\in \F^m$ , $\abs{\{\bar z\ | \ R_E(\bar{a},\bar{z})\}}$ is uniformly bounded (i.e. the bound does not depend on $\bar{a}$) and non-empty;
 \item[(iii)] $\forall\bar{z}(R_E(\bar{a},\bar{z})\leftrightarrow R_E(\bar{b},\bar{z}))$ if and only if $E(\bar{a},\bar{b})$.
\end{itemize}
\end{theorem}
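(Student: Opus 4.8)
The plan is to reduce an arbitrary definable equivalence relation $E(\bar x,\bar y)$ to a bounded family of classes in the basic sorts by a ``finite Makanin--Razborov / canonical decomposition'' analysis of the $E$-classes, carried out uniformly in the parameter. First I would fix the definable map $\bar a\mapsto X_{\bar a}:=E(\bar a,\F)$ and study the family $\{X_{\bar a}\}$ as a definable family of subsets of $\F^{m}$ (or rather of the relevant power of $\F$). By the solution of Tarski's problem and the resulting understanding of definable sets (boolean combinations of $\forall\exists$-sets, equivalently finitely many \emph{resolutions} / towers in the sense of Section \ref{Towers}), the family $\{X_{\bar a}\}$ is governed by finitely many towers; the key point is that the ``shape'' data of these towers — the abelian pouch, the surface and free pieces, and the attaching maps — is itself definable and takes only finitely many values, so one may partition the parameter space $\F^m$ into finitely many definable pieces on each of which the $E$-class has a fixed combinatorial type. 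This is where the uniform boundedness in clause (ii) ultimately comes from.

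Next, on each piece I would extract the \emph{canonical geometric data} of $X_{\bar a}$. The conjugation sort $E_1$ handles the ambiguity of choosing a conjugate of a subgroup or element; the $m$-coset sorts $E_{2_m}$ handle the ambiguity of choosing a coset representative along an abelian (centralizer) edge of the decomposition, where the relevant abelian group $\langle b\rangle$ is a centralizer and the coset lives modulo $\langle b^m\rangle$ for the bounded index $m$ forced by the combinatorial type; and the $m,n$-double coset sorts $E_{3_{m,n}}$ handle the two-sided ambiguity when a piece is attached along two abelian edges simultaneously. Concretely, to each $\bar a$ one associates the finite tuple $\bar z(\bar a)$ consisting of the conjugacy class of each vertex/edge group, the cosets recording how edge groups sit inside vertex groups, and the double cosets recording the gluing; I would then \emph{define} $R_E(\bar a,\bar z)$ to hold exactly when $\bar z$ is one of the finitely many tuples arising this way from $\bar a$. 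Clauses (i) and (ii) are then built in by construction, and clause (iii) reduces to: two parameters give the same $E$-class iff they give rise to the same canonical geometric data. The forward direction of (iii) is the routine direction (if the tuples of canonical data coincide then the sets, being reconstructible from that data, coincide); the reverse direction — that $E(\bar a,\bar b)$ forces the canonical data to literally agree as tuples in the basic sorts — is the substantive point and requires a rigidity/uniqueness statement for the relevant JSJ-type decomposition of the definable set $X_{\bar a}$ (uniqueness up to conjugacy of the abelian/cyclic JSJ, plus the observation that the coset and double-coset \emph{classes}, not just the subgroups, are intrinsic).

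The main obstacle, as I see it, is exactly this canonicity: one must show that the decomposition data one reads off $X_{\bar a}$ is genuinely an invariant of the set and not of the chosen presentation, \emph{and} that this can be done definably and with a bound independent of $\bar a$. In particular the choice of the integers $m$ (and $m,n$) must be pinned down — they come from indices of edge groups inside abelian vertex groups in the canonical decomposition — and one must check that across the finitely many combinatorial types only finitely many such integers occur, so that only finitely many basic sorts $S_i(\F)$ appear. I would handle this by invoking the finiteness of Makanin--Razborov diagrams together with rigidity of abelian JSJ decompositions of limit groups, and by the standard trick of absorbing any residual finite ambiguity into the ``bounded'' clause (ii) rather than trying to eliminate it. Once the canonical data is shown to be a definable, bounded, complete invariant of the $E$-class, assembling $R_E$ and verifying (i)--(iii) is essentially bookkeeping; the conceptual weight is entirely in the uniqueness of the decomposition.
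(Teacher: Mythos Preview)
The paper does not prove this theorem at all: it is stated as a known result of Sela and attributed to \cite[Theorem 4.4]{SelaIm}, with no argument given. So there is nothing in the paper to compare your proposal against; the theorem functions here purely as a black box input to the rest of the argument.

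That said, a word about your sketch on its own terms. The broad shape --- attach to each $E$-class a canonical JSJ-type decomposition and record its invariants as conjugacy classes, cosets of centralizers, and double cosets --- is indeed the conceptual picture behind Sela's result, but what you have written is a plausibility outline rather than a proof. The serious work lies in two places you gloss over. First, you are analysing the family $\{X_{\bar a}\}$ of $E$-classes as definable sets, but the canonical data you describe (JSJ decompositions of limit groups, abelian edge groups, attaching cosets) is attached to \emph{groups}, not to arbitrary definable subsets of $\F^m$; bridging that gap is exactly the content of Sela's analysis of parametrized families via graded resolutions and the sieve procedure, and it is far from formal. Second, the ``rigidity/uniqueness of the JSJ'' you invoke is only uniqueness up to the usual modular automorphisms, and extracting from that a \emph{finite} list of basic-sort tuples satisfying the biconditional in (iii) --- in particular, getting the uniform bound in (ii) and pinning down which $m$ and $(m,n)$ actually occur --- requires the full finiteness theorems for Makanin--Razborov diagrams and a careful bookkeeping argument that is the bulk of \cite{SelaIm}. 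Your last paragraph correctly identifies canonicity as the obstacle but then resolves it by fiat (``I would handle this by invoking\ldots''). If you want to pursue this seriously you should work through Sela's paper directly; the present paper simply quotes the result.
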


\begin{remark}
Originally Sela did not include the family of generalized equivalence relations (except standard conjugation) and the family of bilateral translates. The omission of these equivalence relations do not cause any serious problem, as they can be treated similarly to the rest. Nevertheless, we would like to thank the referee for bringing this gap to our attention. The above corrected statement has been communicated to the author by Sela. 

In the light of these new sorts the results in \cite{MR3846335}, \cite{MR3426232} and \cite{AmpleRizos} are also affected. The first paper is fixed by Lemma \ref{OrbitsCosets}, Corollary \ref{CorrectNFCP} and Proposition \ref{CorrectNFCP2}. The results of the second paper are implied by the results of the present paper. Finally, in the third paper we give arbitrarily large number of images of an equivalence class by using free group automorphisms. Since every equivalence relation is finer than $_cE_2$ or $_cE_3$, it is enough to prove the result for these two relations. In this case Theorem 2.13 in the same paper, yields already the more general result we need.
\end{remark}

We will occasionally, for convenience, denote by $R_E(x, \bar{z})$ the same relation as in the theorem above, but with $x$ a variable in the sort $S_E$. 
Formally the relation with the imaginary variable is defined by $\exists \bar y (R_E(\bar y, \bar{z})\land x=f_E(\bar{y}))$. 
Note that for $\bar a\in \F^m$, the solution sets of $R_E(\bar{a},\bar z)$ and $R_E([\bar a]_E, \bar z)$ in $\F^{eq}$ are the same. We will be calling $R_E(x,\bar{z})$ the {\em elimination relation}.


\subsection{Internality \& One-Basedness}
For this subsection we fix a stable (complete) first-order theory $T$ and a monster model $\mathbb{M}$ of $T$ (see \cite[Section 6.1]{MR2908005} for the definition of a monster model).   

\begin{definition}
Let $B\subset \mathbb{M}$. Then $\bar{a}$ is in the definable closure of $B$, denoted $dcl(B)$, if it is fixed by any automorphism $f\in Aut(\mathbb{M}/B)$, 
i.e. any automorphism of $\mathbb{M}$ that fixes $B$ pointwise.
\end{definition}

For example, for any two elements $a,b$ in a free group $\F$, any word $w(a,b)$ is in $dcl(a,b)$, in particular the subgroup generated by $a,b$,
$\langle a, b\rangle_{\F}$, is contained in $dcl(a,b)$. A more abstract example is the following:

\begin{lemma}\label{GeometricElimination}
Let $E$ be a $\emptyset$-definable equivalence relation in a nonabelian free group $\F$. Let $a$ be an element in the sort $S_E(\F)$. Let $\bar{b}_1, \ldots,\bar{b}_l$ be the solution set of $R_E(a, \bar{y})$ (also denoted $R_E(a, \F^{eq})$). Then 
$a$ is in $dcl^{eq}(\bar{b}_1, \ldots, \bar{b}_l)$.
\end{lemma}
\begin{proof}
Indeed, if an automorphism $f$ of $\mathbb{M}^{eq}$, where $\mathbb{M}$ is the monster model of the first-order theory of the free group, fixes $\bar{b}_1, \ldots, \bar{b}_l$, then it fixes $a$, as, by Theorem \ref{Elim}(iii), it is the unique solution of the formula $R_E(x,\bar{b}_1)\land\ldots\land R_E(x,\bar{b}_l)\land \forall \bar y \bigl(\bigwedge\limits_{i=1}^{l} \bar y\neq \bar b_i \rightarrow \lnot R_E(x,\bar y)\bigr)$.   
\end{proof}

Let $\Sigma$ be a family of partial types (over various small subsets of $\mathbb{M}$). Suppose $A$ is a subset of $\mathbb{M}$. We will say that $\Sigma$ is $A$-invariant if for any $f\in Aut(\mathbb{M}/A)$ and any $p\in\Sigma$ we have $f(p)\in \Sigma$.  

\begin{definition}
Let $\pi(\bar{x})$ be a partial type over $A\subset\mathbb{M}$ and $\Sigma$ be a $\emptyset$-invariant family of partial types. 
Then $\pi(\bar{x})$ is $\Sigma$-internal if for every realization $\bar a$ of $\pi(\bar x)$ there exist $B\subset\mathbb{M}$, for which $tp(\bar a / B)$ does not fork over $A$, and $\bar b$ realizing types in $\Sigma$ based on $B$, such that $\bar{a}\in dcl(B,\bar{b})$. 
\end{definition}

Straight from  the definition of forking independence we get (see \cite[Definition 2.19]{MR1429864}).  

\begin{fact}\label{Forking}
Let $\bar a$ be a tuple in $\mathbb M$. Then $tp(\bar a / B)$ does not fork over $B$, for any $B\subset \mathbb{M}$.
\end{fact}

Since, in the sequel, we will only use internality with $A=B$ we will not explain the notion of forking. In any case, the interested reader is referred to \cite[Chapter 1, Section 2]{MR1429864}. 

Intuitively, internality can be thought of as an abstract coordination notion. The (solution set of the) partial type $\pi$ is coordinated by (solution sets of partial types in) $\Sigma$.    

Finally, the following fact will be important for proving the main result of this paper (see \cite[Corollary 12]{MR2039343}). 

\begin{fact}[Wagner]
Let $\pi(\bar{x})$ be a $\Sigma$-internal (partial) type. If every type in $\Sigma$ is $1$-based, then $\pi(\bar{x})$ is $1$-based.    
\end{fact}

On the other hand Pillay proved (see \cite[Proposition 3.13]{MR1782132}). 

\begin{fact}[Pillay]
If there exists an infinite field interpretable in a first-order theory $T$, then $T$ is $n$-ample for all $n<\omega$. In particular, a $1$-based first-order theory cannot interpret an infinite field.  
\end{fact}

Since we will not deal in a direct manner with the notions of $1$-basedness and ampleness we refer the interested reader to \cite[Chapter 4]{MR1429864} and \cite{AmpleDividing} respectively.


\section{Stars of groups}\label{StarofGroups}

In this section we recount results about a particular type of graph of groups, i.e. graph of groups in which the underlying graph is a star (see Figure \ref{Star}). 
Equivalently, we amalgamate a (finite) family of groups $\{G_i\}_{i\in I}$ over a common subgroup $A$, and denote it by $G=*_A\{G_i\}_{i\in I}$. 
We call the resulting group a {\em star of groups} and each $G_i$, $i\in I$, a {\em factor subgroup}. Moreover, we call the cardinality of the index set $I$, the {\em number of rays} of the star of groups.  
Finally, for the sake of clarity we will abuse notation and 
identify $A$ with its images in the $G_i$'s under the defining embeddings $f_i:A\rightarrow G_i$.

\begin{figure}[ht!]
\centering
\includegraphics[width=.4\textwidth]{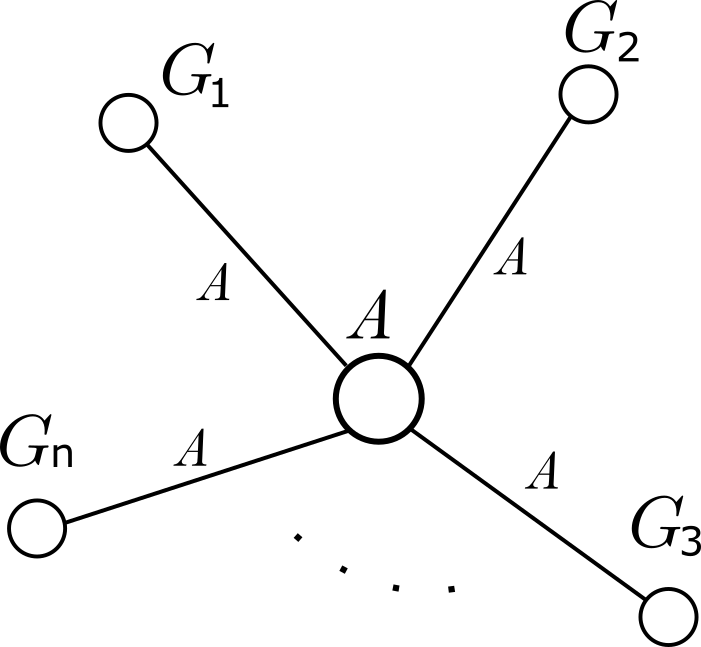}
\caption{A star of groups}
\label{Star}
\end{figure}

\subsection{Reduced Forms}\label{Canonical Forms}

We define the notion of a word in reduced form in a star of groups. Everything in this subsection is an easy generalization of results about amalgamated free products (see \cite{MR1954121}, \cite[Section 4.2]{MR0422434}, and \cite[Chapter IV (2)]{MR1812024}).  

\begin{definition}[reduced forms (cf Chapter I, Section 1.2 in \cite{MR1954121})]
Let $G=*_A\{G_i\}_{i\in I}$ be a star of groups. A word $ag_1g_2\ldots g_n$ is reduced if
\begin{itemize}
\item $a\in A$;
\item $g_j\in \bigcup \{G_i\setminus A \}_{i\in I}$, for each $j\leq n$;
\item  no consecutive $g_i$'s are in the same factor subgroup.
\end{itemize} 

To each reduced word $ag_1g_2\ldots g_n$ we can assign its length $L(ag_1g_2\ldots g_n)=n$.
\end{definition}

\begin{fact}[See Theorem 1 p. 3 in \cite{MR1954121}]
Every element $g$ of a star of groups, $G=*_A\{G_i\}_{i\in I}$, can be represented by a reduced word, i.e. $g=a\cdot g_1\cdot g_2\cdot\ldots\cdot g_n$. 

Furthermore, 
the representation is not unique, but the length and the sequence of factor subgroups for every representation is.
\end{fact}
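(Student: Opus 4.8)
\textbf{The plan} is to prove the normal form theorem for the amalgam $G=*_A\{G_i\}_{i\in I}$ (which for $\abs{I}=2$ is the classical theorem for $G_1*_A G_2$) by the van der Waerden argument, adapted to an arbitrary index set. Existence of a reduced representation is the easy half: since the factor subgroups generate $G$, every $g\in G$ can be written as a product $h_1h_2\cdots h_k$ with $h_j\in G_{i_j}$; now reduce by repeatedly multiplying together consecutive letters that lie in the same factor and, whenever a letter lies in $A$ and $k\geq 2$, absorbing it into a neighbour (legitimate because $A$ sits inside every $G_i$), and finally sweeping the leftover $A$-part to the front into a single $a\in A$. Each multiplication or absorption strictly decreases the number of letters, so the procedure terminates, at a word $a\cdot g_1\cdots g_n$ satisfying all three conditions of the definition, i.e. a reduced word for $g$.

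For the invariance of the length, let $\Omega$ be the set of reduced sequences $(a;g_1,\ldots,g_n)$, $n\geq 0$ — the raw data of a reduced word, before it is multiplied out. Fix once and for all a transversal $T_i$ of $A$ in each $G_i$ with $1\in T_i$, and normalize so that $g_j\in T_{i_j}\setminus\{1\}$. For each $i\in I$, define $\lambda_i\colon G_i\to\mathrm{Sym}(\Omega)$ by ``left-multiply and re-reduce'': $\lambda_i(h)$ sends $(a;g_1,\ldots,g_n)$ to the normalized reduced sequence of $h\cdot a\cdot g_1\cdots g_n$, obtained by a short case split on whether $g_1$ lies in $G_i$ (equivalently $i_1=i$) and whether the resulting head product falls into $A$. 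A routine verification shows that each $\lambda_i$ is a homomorphism into $\mathrm{Sym}(\Omega)$ and, crucially, that $\lambda_i|_A$ is independent of $i$ — it is $a'\mapsto\bigl((a;g_1,\ldots,g_n)\mapsto(a'a;g_1,\ldots,g_n)\bigr)$ in every case. By the universal property of the star of groups (it is the colimit of the system $A\hookrightarrow G_i$), the maps $\lambda_i$ assemble into a single action $\lambda\colon G\to\mathrm{Sym}(\Omega)$. Now if $a\cdot g_1\cdots g_n$ is any reduced word representing $g$, then applying $\lambda(g)$ to the empty sequence $(1;\,)$ returns exactly $(a;g_1,\ldots,g_n)$, since no reduction triggers — precisely because the word is reduced. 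Hence two reduced words for the same $g$ yield the same element of $\Omega$, so in particular the same length (and in fact the same $a$ and the same normalized $g_j$). The same computation gives the Britton-type consequence that $g\in A$ forces $n=0$.

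Non-uniqueness is immediate once $A\neq 1$: for a reduced word $g_1g_2$ of length $2$ and any $1\neq a\in A$, the word $(g_1a)(a^{-1}g_2)$ is a different reduced representation of the same element, necessarily of the same length. \textbf{The one delicate point} — and the only real obstacle — is the verification that each $\lambda_i$ is a well-defined homomorphism $G_i\to\mathrm{Sym}(\Omega)$, i.e. that the multiply-and-reduce recipe genuinely defines an action; this is the bookkeeping heart of every normal-form theorem, and working with the fixed transversals $T_i$ and normalized sequences turns it into a mechanical case analysis. Alternatively one may simply invoke Bass--Serre theory: a star of groups is the fundamental group of a graph of groups whose underlying graph is the star, and the Fact is then the standard reduced-word theorem for such fundamental groups.
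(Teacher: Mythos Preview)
The paper does not prove this statement: it is recorded as a \emph{Fact}, i.e.\ a standard result quoted without argument (the normal form theorem for amalgamated free products, here in the multi-factor ``star'' version; cf.\ Serre's \emph{Trees} or any Bass--Serre reference). So there is no paper proof to compare against.

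Your proposal is correct and is exactly the classical van der Waerden argument, cleanly adapted to an arbitrary index set $I$. The existence half is fine; the action $\lambda\colon G\to\mathrm{Sym}(\Omega)$ assembled via the universal property is the right way to get length-invariance, and your observation that $\lambda(g)\cdot(1;\,)$ recovers the normalized sequence of any reduced word for $g$ gives not only the length but in fact the full normalized data (which is a bit more than the Fact asserts, and matches the paper's subsequent Remark about uniqueness after fixing transversals). The non-uniqueness example is the standard one. The only caveat you already flagged yourself: the case analysis verifying that $\lambda_i$ is a homomorphism is routine but should be written out in a fully detailed exposition. Your closing remark that one may alternatively invoke Bass--Serre theory is precisely the spirit in which the paper treats this as a Fact.
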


\begin{remark}
The representation can be made unique if we fix a transversal for each $G_i$ with respect to (the image) of $A$. For the purpose of this paper it will be enough that every reduced word representative of an element of $G$ has identical sequence of factor subgroups, i.e. if $g=ag_1g_2\ldots g_n=bh_1h_2\ldots h_n$, then 
for each $i\leq n$ $h_i$ and $g_i$ belong to the same factor subgroup.
\end{remark}

\begin{lemma}[cf. Theorem 4.5(ii) in \cite{MR0422434}]\label{Commutation}
Let $G=*_A\{G_i\}_{i\in I}$ be a star of groups and $g,h\in G$ such that $g$ commutes with $h$. If neither $g$ nor $h$ is in a conjugate of the common subgroup $A$, 
and $g$ is in a conjugate of a factor subgroup $G_i$, for some $i\in I$, then $h$ is in the same conjugate of $G_i$. 
\end{lemma} 
\begin{proof}
The proof is by induction on the number of rays, $n=|I|$, of the star of groups. The basis of the induction is given by Theorem 4.5(ii) in \cite{MR0422434}. Now suppose the result holds for all 
stars of groups with at most $n$ number of rays, we will show it holds for $n+1$. Let $G=*_A\{G_i\}_{i\in I}$ for some $I$ with $|I|=n+1$. Then we may see $G$ as the amalgamated free product $(*_A\{G_i\}_{i\in J})*_AG_j$, 
where $J=I\setminus j$. Let $g, h$ be elements of $G$ as in the hypothesis. If $g$ is in a conjugate of some factor subgroup $G_i$ with $i\in J$, then it is conjugate in $(*_A\{G_i\}_{i\in J})$, thus by applying Theorem 4.5(ii) in \cite{MR0422434} for the amalgamated free product we get that $h$ belongs to the same conjugate and the result follows by the induction hypothesis. If $g$ is in a conjugate of $G_j$, then, as before, $h$ is in the same conjugate.
\end{proof}

When dealing with conjugacy classes it will be convenient to have a similar form to work with. 

\begin{definition}[Cyclically reduced forms]
A reduced word $ag_1g_2\ldots g_n$ in a star of groups is cyclically reduced, if $g_1, g_n$ are in different factor subgroups unless $n=1$.
\end{definition}

\begin{lemma}[Proposition 2 p. 5 in \cite{MR1954121} \& cf. Theorem 4.6 in \cite{MR0422434}]\label{ConjugacyNormalForm}
Every element of a star of groups, $G=*_A\{G_i\}_{i\in I}$, is conjugate to (the representative of) a cyclically reduced word. 

Furthermore, suppose $b$ and $c$ are cyclically reduced and conjugate in $G$. Then: 
\begin{itemize}
\item If $b$ belongs to $A$, then $c$ belongs to some factor subgroup $G_i$, $i\in I$, and there is a sequence $b=a_0, a_1, a_2, \ldots, a_m, a_{m+1}=c$, 
where each $a_i$, for $0<i\leq m$, is in $A$ and $a_{i+1}$ is conjugate to $a_i$, for $i<m$, in some factor subgroup. 
\item if $b$ belongs to some factor subgroup but cannot be conjugated in $A$, then $c$ belongs to the same factor and 
they are conjugates in this factor. 
\item if $b=ab'_1b_2\ldots b_n=b_1b_2\ldots b_n$ with $L(ab_1b_2\ldots b_n)>1$, then $c$ can be obtained by cyclically permuting $b_1, b_2, \ldots, b_n$ and then 
conjugating by an element of $A$.
\end{itemize} 
\end{lemma}
\begin{proof}
That every element of a star of groups is conjugate to a cyclically reduced word is proved by Proposition 2 in \cite{MR1954121}. The additional claims are proved by induction on the number of rays of the star of groups and the proofs are similar to the proof of Lemma \ref{Commutation}. We only do the last case, i.e. the case where $b$ has length strictly greater than $1$. 
The basis of the induction is given by Theorem 4.6 in \cite{MR0422434}. Suppose the result holds for all stars of groups with at most $n$ rays, we will show it holds for $n+1$. Let $G=*_A\{G_i\}_{i\in I}$ for some $I$ with $|I|=n+1$. Then we may see $G$ as the amalgamated free product $(*_A\{G_i\}_{i\in J})*_AG_j$, where $J=I\setminus j$. Let $b, c$ be elements of $G$ as in the hypothesis and $b$ has length strictly greater than $1$. If $b$ belongs to 
$(*_A\{G_i\}_{i\in J})$, i.e. if each $b_i$ belongs to some factor subgroup $G_i$ for $i\in J$, then by applying Theorem 4.6 in \cite{MR0422434} once more, we get that $c$ belongs as well to $(*_A\{G_i\}_{i\in J})$ and they are conjugates there. Hence, in this case the result follows from the induction hypothesis. 

If $b$ does not belong to $(*_A\{G_i\}_{i\in J})$, then it has length strictly greater than one with respect to the amalgamated free product $(*_A\{G_i\}_{i\in J})*_AG_j$, say $b=\gamma_1\gamma_2\ldots\gamma_m$. We may assume that $\gamma_1\ldots\gamma_m$ is cyclically reduced (if not $\gamma_1'\gamma_2\ldots\gamma_{m-1}$, where $\gamma_1'=\gamma_m\gamma_1$ is, because $\gamma_1'$ does not belong to $A$, since  $b_1b_2\ldots b_n$ is cyclically reduced). Hence, maybe after permuting, we may assume that $\gamma_{2k+2}$ belong to $G_j\setminus A$ and each $\gamma_{2k+1}$ admits a reduced form with respect to $(*_A\{G_i\}_{i\in J})$. Now, $c$ is obtained by a cyclic permutation of $\gamma_1\gamma_2\ldots\gamma_m$ followed by a conjugation by an element of $A$, i.e. $c=a\gamma_{\sigma(1)}\gamma_{\sigma(2)}\ldots\gamma_{\sigma(m)}a^{-1}$. Unfolding the $\gamma_{\sigma(2k+1)}$ to their reduced forms, we get that $c$ is a cyclic permutation of $b$ followed by conjugation by $a$ in $G=*_A\{G_i\}_{i\in I}$.   
\end{proof}

\subsection{Permuting the rays}

For this subsection we fix a star of groups $G=*_A\{G_i\}_{i\leq n}$ of infinite finitely generated groups, where the image of $A$, the common subgroup, is a proper subgroup of each $G_i$. For any pair $G_i, G_j$ of factor subgroups we consider an  isomorphism of sets $f_{ij}:G_i\rightarrow G_j$ that extends the isomorphism between the images of $A$. In addition, $f_{ii}=Id$ and $f_{ik}=f_{jk}\circ f_{ij}$ for all $i,j,k\leq n$. 

We will first study the orbits of cyclically reduced words under the action induced by permuting the rays of a star of groups. Let $X$ be the set of cyclically reduced words of length $\geq 1$ of $G$. Then the group of permutations $S_n$ acts on $X$ as follows. We consider the natural action of $S_n$ on the 
set of $n$ rays, each element $\sigma \in S_n$ induces a permutation of the factor groups $\{G_i\}_{i\leq n}$ and sends a cyclically reduced word $ag_1g_2\ldots g_m$ to a cyclically reduced word $ah_1h_2\ldots h_m$ 
in the following way: if $g_i$ belongs to the factor $G_j$, then $h_i=f_{j\sigma(j)}(g_i)$ and, thus,  belongs to the factor $G_{\sigma(j)}$. Lemmas \ref{OrbitsConjugacy} and \ref{OrbitsCosets} do not depend on the the particular choices of $f_{ij}$. It 
is immediate, by properties of permutation groups, that the word $ah_1h_2\ldots h_m$ is cyclically reduced (and of length $m\geq 1$). The action is free on the subset $X_f$ of $X$ that consists of cyclically 
reduced words that every of the $n$ factor subgroups has a representative in them. As a matter of fact a stronger property holds for $g\in X_f$, if $\sigma(g)=aga^{-1}$ for some $a\in A$, then $\sigma$ 
must be the trivial permutation.     

\begin{lemma}\label{OrbitsConjugacy}
Let $g=ag'_1g_2\ldots g_m=g_1g_2\ldots g_m$ be a cyclically reduced word of length $\geq 1$. Then the orbit $S_n.g$ 
contains at least $\lceil\frac{\lfloor n/2\rfloor}{2}\rceil$ conjugacy classes.   
\end{lemma}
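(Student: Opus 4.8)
The plan is to count conjugacy classes inside the orbit $S_n.g$ by tracking, for a cyclically reduced word, the \emph{sequence of factor-subgroup labels} read off its syllables. By Fact~\ref{ConjugacyNormalForm}, if $L(g)>1$ then two cyclically reduced words are conjugate precisely when one is obtained from the other by a cyclic permutation of syllables followed by conjugation by an element of $A$; in particular the cyclic word of labels $(j_1,j_2,\ldots,j_m)\in(\mathbb{Z}/m\mathbb{Z}\text{-indexed tuple over }I)$ is a conjugacy invariant, and applying $\sigma\in S_n$ replaces this label word by $(\sigma(j_1),\ldots,\sigma(j_m))$. So the number of conjugacy classes in $S_n.g$ is at least the number of distinct cyclic-equivalence-classes of label words of the form $(\sigma(j_1),\ldots,\sigma(j_m))$ as $\sigma$ ranges over $S_n$ — and this is a purely combinatorial quantity depending only on the label word $w=(j_1,\ldots,j_m)$ of $g$. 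I would handle the degenerate cases ($L(g)=1$, i.e. $g$ lies in a single factor or in $A$) separately: there the orbit is easily seen to contain at least $\lfloor n/2\rfloor$ (or at least $n$ up to the $A$-conjugacy subtlety on $A$) classes, which dominates the claimed bound.

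The combinatorial core is then: given a word $w$ over an alphabet whose letters are drawn from an $n$-element set, with \emph{no two cyclically-adjacent letters equal} (cyclic reducedness), lower-bound the number of cyclic classes of $\{\sigma \cdot w : \sigma\in S_n\}$ by $\lceil \lfloor n/2\rfloor / 2\rceil$. The key observation is that $\sigma\cdot w$ and $\tau\cdot w$ are cyclically equivalent only when $\sigma^{-1}\tau$ lies in a small "symmetry" subgroup: concretely, if $w$ actually uses $k$ of the $n$ letters, then permuting the $n-k$ unused rays, or applying a cyclic symmetry of the pattern of $w$, can identify orbit elements, but nothing else can. So the count of cyclic classes is at least $\frac{1}{c}\binom{n}{k}\cdot\frac{k!}{(\text{cyclic automorphisms of }w)}$ for the appropriate normalization — in any case a quantity that grows with $n$. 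To get the explicit bound $\lceil\lfloor n/2\rfloor/2\rceil$ with no hypothesis on $k$, I would argue directly: pick two consecutive syllables of $g$ in factors $G_{j}, G_{j'}$ with $j\neq j'$, and for each of the $\lfloor n/2\rfloor$ "matchings" sending the pair $\{j,j'\}$ to a disjoint pair of rays, produce an element of $S_n.g$ whose label word contains that pair of adjacent labels; two such have cyclically equivalent label words only if the pairs coincide or are interchanged, giving the division by $2$ and the ceiling.

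The step I expect to be the main obstacle is making the "only if" direction of the combinatorial identification airtight — i.e. showing that distinct choices genuinely give non-conjugate words rather than merely heuristically different ones. This is where the remark in the text before the lemma is doing work: on the subset $X_f$ of words that hit \emph{every} ray, the $S_n$-action is not only free but satisfies "$\sigma(g)=aga^{-1}$ with $a\in A$ forces $\sigma=\mathrm{id}, a=1$," which is exactly the rigidity needed to convert orbit-size into conjugacy-class-count. For general $g$ (not in $X_f$) one must quotient by the stabilizer of the support of $w$ inside $S_n$, and carefully check that the cyclic-permutation ambiguity from Fact~\ref{ConjugacyNormalForm} interacts with the ray-permutation ambiguity only in the expected way (they commute appropriately, and a combined symmetry still fixes the adjacent pair chosen above unless it swaps its two entries). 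Once that bookkeeping is pinned down, the inequality $|S_n.g / \text{conjugacy}| \geq \lceil\lfloor n/2\rfloor/2\rceil$ follows by the pairing argument, and letting $n\to\infty$ this is what later feeds the noncommutativity argument for interpretable sets.
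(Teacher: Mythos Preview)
Your reduction to counting cyclic-equivalence classes of label words via Fact~\ref{ConjugacyNormalForm} is exactly right and matches the paper's framework. Where your proposal diverges is in the combinatorial core, and the gap you anticipate is real rather than merely bookkeeping: the ``matching'' argument does not work as stated. If the label word $w$ uses more than two factors, then sending the chosen adjacent pair $\{j,j'\}$ to a new pair $P$ while leaving the other labels' images unspecified means two of your $\lfloor n/2\rfloor$ outputs can be cyclically equivalent even when their designated pairs are distinct and not swaps of one another --- a cyclic rotation of $w$ can carry your designated adjacent position to a different position whose labels come from elsewhere in $w$. So the ``only if'' direction genuinely fails for the invariant you chose.

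The paper takes a sharper combinatorial route. It fixes the set $\Sigma=\{(1\,2),(3\,4),\ldots\}$ of $\lfloor n/2\rfloor$ pairwise disjoint transpositions and proves that among any \emph{three} of them the images $\sigma.g,\tau.g,\delta.g$ represent at least two conjugacy classes; since no conjugacy class then contains more than two of the $\Sigma$-images, the bound $\lceil\lfloor n/2\rfloor/2\rceil$ follows. The three-to-two step is the nontrivial one: after translating by $\sigma$, one must rule out that both $\mu_1.g$ and $\mu_2.g$ are conjugate to $g$, where $\mu_1=\sigma\tau$ and $\mu_2=\sigma\delta$ are order-$2$ permutations with distinct supports. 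If they were, Fact~\ref{ConjugacyNormalForm} gives cyclic rotations $\sigma_1,\sigma_2\in\langle(1\,2\cdots m)\rangle$ realising the conjugacies; freeness of the action on $X_f$ forces $o(\sigma_i)$ to be even (since $\mu_i^{o(\sigma_i)}$ fixes $g$ and $\mu_i$ has order $2$). But any two elements of a cyclic group satisfy $\sigma_1^{k}=\sigma_2^{l}$ with $k,l$ not both even, and comparing the factor-subgroup of a syllable moved by $\mu_1$ but fixed by $\mu_2$ under $\mu_1^{k}$ versus $\mu_2^{l}$ yields a contradiction. This parity trick on the rotation orders is the idea missing from your sketch. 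The case where some factor is absent from $g$ is handled separately and is easier, as you suspected.
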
   
\begin{proof}
We may assume that $n$ is large enough.

We first prove the result for cyclically reduced words in which all factor subgroups have a representative in them.
We abstract the element $g=g_1g_2\ldots g_m$ by only remembering the sequence of its factor subgroups, i.e. if $g_j\in G_{i_j}$, then we consider the surjective map $u:m\rightarrow n$, where $u(j)=i_j$, for $j\leq m$. The group generated by the cyclic permutation $\tau=(1 2 \ldots m)$ acts on the set consisting of these maps $u$ by pre-composition and the symmetric group $S_n$ acts on the same set by post-composition. In addition, the two actions are compatible. We observe that the orbit of $u$ under the action of $\langle \tau\rangle$ depends only on the conjugacy class of $g$. Furthermore, there exists a smallest natural number $d>1$ such that $u.\tau^d=u$. We want to understand the permutations of $S_n$ that fix the orbit of $u$ by $\langle \tau\rangle$, that is the permutations that fix the conjugacy class of $g$. We will show that the stabilizer of the $u.\langle \tau\rangle$ in $S_n$ embeds in $\mathbb Z/d\mathbb Z$, hence it is cyclic. 
Indeed, let $\mu\in Stab_{S_n}(u.\langle \tau\rangle)$. Then $\mu.u=u.\tau^k$, where $k$ is well defined $mod \ d$. Thus, there exists a morphism $h$ from  $Stab_{S_n}(u.\langle \tau\rangle)$ to $\mathbb Z/d\mathbb Z$. Now, if 
$\mu.u=u$ we must have, since $g\in X_f$, that $\mu$ is the identity permutation, hence $h$ is injective. 

Finally, we consider the product $\Pi_2^{\lfloor n/2\rfloor}:=(\mathbb Z/2\mathbb Z)\times\ldots\times (\mathbb Z/2\mathbb Z)$ generated by the permutations $\{(1,2), (3,4), \ldots, (n-1,n)\}$. It is a subgroup of $S_n$ all of whose elements have order $2$. 
Since a cyclic group can have at most one element of order $2$ we get that the intersection of $Stab_{S_n}(u.\langle \tau\rangle)$ with $\Pi_2^{\lfloor n/2\rfloor}$ contains at most two elements. Now an easy counting argument gives  the result.  

When a factor subgroup is not represented in the cyclically reduced word $g_1g_2\ldots g_m$ we can do better. Assume, that $k$ out of $n$ factor subgroups are not represented. 
Without loss of generality, we may assume that $G_1, \ldots, G_k$ are not represented. Then, we may consider the $(n-k)\cdot k$-many transpositions, $\{ (i, k+j) \ | \ 1\leq i \leq k, \ 1\leq j \leq n-k\}$. 
Any cyclically reduced word of the above type has as many images (up to conjugation) under those transpositions and the result follows.   
\end{proof} 

We next prove similar results for basic equivalence relations. We will assume some familiarity with the theory of actions by isometries on trees. In particular, we will freely use results from \cite[Chapter 3]{MR1851337}.    


\begin{lemma}\label{OrbitsCosets}
Let $h=dh'_1h_2\ldots h_m=h_1h_2\ldots h_m\neq 1$ be a reduced word of length $\geq 1$. Suppose for each nontrivial element, $a\in A$, its centralizer, $C_{G}(a)$, is contained in a factor subgroup $G_i$, for some $i\leq n$.  
 
\begin{itemize}
    \item the orbit $S_n.h$ contains at least $\lfloor n/2\rfloor$ non-commuting elements.
    \item the orbit $S_n.h$ contains at least $\lceil\frac{\lfloor (n-1)/2\rfloor}{2}\rceil$ non-commuting conjugation elements.
    \item assume $G$ is a limit group, then for any nontrivial $a\in A$, the set $\{(\sigma.h,a) \ | \ \sigma \in S_n \}$ contains at least $n-1$ equivalence classes, $_cE_2$, of cosets with conjugation.
    \item assume $G$ is a limit group, then for any nontrivial $a,b\in A$, the set $\{(b,\sigma.h, a) \ | \ \sigma \in S_n \}$ contains at least $n-2$ equivalence classes, $_cE_3$, of double cosets with conjugation.
\end{itemize}
\end{lemma}
\begin{proof}
We consider the action on the corresponding Bass-Serre tree $X$. Let $*$ be the (unique) vertex stabilised by $A$, and $x_i$, for $i\leq n$, the (unique) vertices stabilised by $G_i$ respectively. A fundamental domain of the action is the star consisting of the convex hull of $\{*, x_1, \ldots, x_n\}$, with $*$ in the center.  

In order to prove the first point we consider the characteristic set, $A_h$, of $h$. For any permutation $\sigma\in S_n$, if $h$ commutes with $\sigma.h$, then $A_h=(\sigma.h).A_h$.  If $h_1$ belongs to the factor subgroup $G_{i_1}$, then $h.*$ belongs to the connected component of $X\setminus\{*\}$ that contains $x_{i_1}$. Moreover, $A_h$ either belongs to the same component (in the case $*$ does not belong to $A_h$ - see figure \ref{CosetOrbit}) or is a line that contains the segment $[x_{i_1},*, x_j]$, for some $i_1\neq j\leq n$ and $h$ translates from $*$ towards $x_{i_1}$. In both cases, we can choose $\lfloor n/2 \rfloor$ transpositions moving $i_1$ that will give us pairwise noncommuting elements. 

\begin{figure}[ht!]
\centering
\includegraphics[width=.7\textwidth]{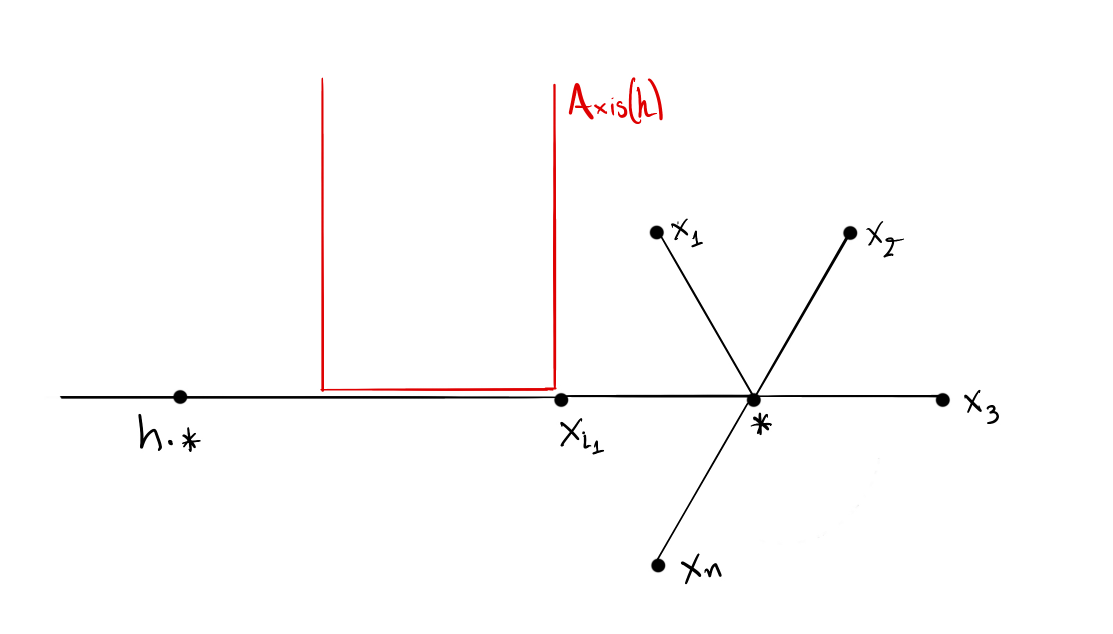}
\caption{The characteristic set of $h$ in the case $h$ is hyperbolic it does not contain $*$.}
\label{CosetOrbit}
\end{figure}


For the second point we split the proof in cases according to whether $h$ is elliptic or not. 
\begin{itemize}
\item Suppose $h$ is elliptic. Without loss of generality it fixes a translate of $x_1$. Then for any transposition, $\sigma$, of the $n-1$ transpositions from the set $\{(1, j) \ : \ 2\leq j \leq n\}$, the element $\sigma.h$ cannot be conjugated to the same vertex group as $h$. Hence, by Lemma \ref{Commutation} we have $n$ pairwise non-commuting conjugation elements   
\item Suppose $h$ is hyperbolic. We may assume that $h_1h_2\ldots h_m$ is cyclically reduced and $m>1$. We first tackle the case where all factor subgroups are represented in $h$. Let $u:m\rightarrow n$ be the function that assigns the factor subgroup to the $j$-th element of $h_1h_2\ldots h_m$, i.e. if $h_j\in G_{i_j}$, then $u(j)=i_j$, for $j\leq m$. Then the characteristic set, $A_h$, of $h$, that we will call the axis of $h$, is a line that contains the following segment $I:=[x_{u(m)}, *, x_{u(1)}, h_1.*, h_1.x_{u(2)}, h_1h_2.*, \ldots, h_1h_2\ldots h_m.*]$. It is surprisingly easy to find that $x_{(u(m)}$ belongs to the axis  since an easy calculation shows that $d(x_{u(m)}, h^2.x_{u(m)})=2d(x_{u(m)},h.x_{u(m)})$, where $d$ is the path metric on the Bass-Serre tree. Mroeover, translates of $I$ by powers of $h$ cover the whole axis. If $\sigma$ is a permutation in $S_n$, then $\sigma.h$ (that by abusing notation we will decompose as $\sigma.h_1\sigma.h_2\ldots \sigma.h_m$) is still hyperbolic (and cyclically reduced) moreover its axis contains the segment $[x_{\sigma.u(m)}, *, x_{\sigma.u(1)}, (\sigma.h_1).*,$ $ (\sigma.h_1).x_{\sigma.u(2)}, (\sigma.h_1\sigma.h_2).*, \ldots, (\sigma.h).*]$. A conjugate of a hyperbolic element is still hyperbolic and the axis of $\gamma h\gamma^{-1}$ is a translate of $A_h$ by $\gamma$, i.e. $A_{\gamma h\gamma^{-1}}=\gamma.A_h$. In addition, if two hyperbolic elements commute then their axes coincide.     

Without loss of generality we assume that $u(m)=n$ and we consider the subgroup, $S_{n-1}$, of permutations that fix $n$. Now, if $\sigma\in S_{n-1}$ and a conjugate, $\gamma h \gamma^{-1}$, of $h$ commutes with $\sigma.h$, we must have, since $\gamma.A_h=A_{\sigma.h}$, that $\gamma.*=(\sigma.h)^k\sigma.h_1\sigma.h_2\ldots\sigma.h_i.*$ for some $k\in\Z$ and $0\leq i<m$. Hence $\gamma$ is equal to $(\sigma.h)^k\sigma.h_1\sigma.h_2\ldots\sigma.h_i\cdot a$ for some $a\in Stab(*)$. Since $\sigma.h_i\cdot a$ belongs to the same factor subgroup as $\sigma.h_i$ we will take, for convenience of notation, $\gamma$ to be equal to $(\sigma.h)^k\sigma.h_1\sigma.h_2\ldots\sigma.h_i$. Moreover, we must have $$\gamma.(h_1.*)= (\sigma.h)^k\sigma.h_1\sigma.h_2\ldots\sigma.h_i\sigma.h_{i+1}.*$$ 
$$\gamma.(h_1h_2.*)= (\sigma.h)^k\sigma.h_1\sigma.h_2\ldots\sigma.h_i\sigma.h_{i+1}\sigma.h_{i+2}.*$$
$$\vdots$$
$$\gamma.(h_1h_2\ldots h_m.*)=(\sigma.h)^{k+1}\sigma.h_1\sigma.h_2\ldots\sigma.h_i.*$$
Indeed, we can eliminate the alternative that, for example, $\gamma.(h_1.*)=(\sigma.h)^k\sigma.h_1\sigma.h_2\ldots$ $\sigma.h_{i-1}.*$ (i.e. $\gamma$ inverts the direction), because the segment $[x_n, *]$ belongs to both axes $A_h$ and $A_{\sigma.h}$ (this is a consequence of $\sigma$ fixing $n$). In particular, $\gamma$ moves the segment $[x_n, *]$ along the axis of $\sigma.h$ and must move towards the same direction the segment $[*, x_{u(1)}, h_1.*, h_1.x_{u(2)}, h_1h_2.*, \ldots,$ $h_1h_2\ldots h_m.*]$. Finally, substituting $\gamma$ by 
$(\sigma.h)^k\sigma.h_1\sigma.h_2\ldots\sigma.h_i$ we get the following equalities:
$$(\sigma.h)^k\sigma.h_1\sigma.h_2\ldots\sigma.h_i.(h_1.*)= (\sigma.h)^k\sigma.h_1\sigma.h_2\ldots\sigma.h_i\sigma.h_{i+1}.*$$ 
$$(\sigma.h)^k\sigma.h_1\sigma.h_2\ldots\sigma.h_i.(h_1h_2.*)= (\sigma.h)^k\sigma.h_1\sigma.h_2\ldots\sigma.h_i\sigma.h_{i+1}\sigma.h_{i+2}.*$$
$$\vdots$$
$$(\sigma.h)^k\sigma.h_1\sigma.h_2\ldots\sigma.h_i.(h_1h_2\ldots h_m.*)=(\sigma.h)^{k+1}\sigma.h_1\sigma.h_2\ldots\sigma.h_i.*$$

The latter equalities imply that $\sigma.u=u.\tau^k$ for $\tau=(1 \ 2 \ \ldots \ m)$. Thus, we can use the same argument as in Lemma \ref{OrbitsConjugacy} to conclude that we have $\lceil\frac{\lfloor (n-1)/2\rfloor}{2}\rceil$ pairwise non-commuting conjugation elements in the orbit of $h$ by $\Pi_2^{\lfloor (n-1)/2\rfloor}:=\{(1, 2), (3, 4), \ldots,$ $(n-2, n-1)\}$. 

The easier case where at least one factor group is not represented in $h$ is left to the reader. 
\end{itemize}

For the third point, observe that $C_{G}(a)$ is assumed to be contained in a factor subgroup and without loss of generality it is contained in $G_1$. If $(\sigma.h C_{G}(a))^g=\tau.h  C_{G}(a)$, for some $g\in G$, then $[a^g,a]=1$ and in particular, since limit groups are CSA, we get that $g\in C_{G}(a)\subset G_1$. Consequently, we get $g(\sigma.h)g^{-1}\gamma=\tau.h$, for some $\gamma\in G_1$. If $h_m$ belongs to $G_{i_m}$, the transpositions $(i_m,j)$, for $1<j\leq n$, give $n-1$ distinct $_cE_2$-classes.   

Finally, for the fourth point, assume without loss of generality that $C_G(a)\subseteq G_1$ and $C_G(b)\subseteq G_2$. If $(C_{G}(b) \sigma.h C_{G}(a))^g= C_{G}(b) \tau.h C_{G}(a)$, for some $g\in G$, then $[b^g,b]=1$ and $[a^g,a]=1$, in particular, since limit groups are CSA we get that $g\in G_1\cap G_2$. Consequently, we get  $\gamma g \sigma.h g^{-1}\delta = \tau.h$, for some $\gamma\in G_2$, $\delta\in G_1$ and $g\in G_1\cap G_2$. Thus, if $h_m$ belongs to $G_{i_m}$, the transpositions $(i_m,j)$, for $2<j\leq n$, give $n-2$ distinct $_cE_3$-classes.

\end{proof}

%


\section{Towers}\label{Towers}

A {\em limit group} $L$ is a finitely generated group for which there exists a sequence of morphisms, $(h_n)_{n<\omega}:L\rightarrow \F$, such that for every nontrivial $g\in L$, $h_n(g)\neq 1$ for all but finitely many $n$. 
We call a sequence such as $(h_n)_{n<\omega}$, a stably injective sequence. We give more details on the definition and connections with the class of $\omega$-residually free groups in subsection \ref{RealTrees}.

In this section we define a special subclass of limit groups namely groups that have the structure of a {\em tower}. Towers played an important role in the proof of the elementary 
equivalence of nonabelian free groups. Notably they have been used in order to generalize Merzlyakov's  
theorem \cite{MR1972179} \cite{MR2154989}, which is the conceptual basis of the proof of the quantifier elimination (down to boolean combinations of $\forall\exists$-formulas).

A tower is built recursively by adding {\em floors} to a given ground floor, that consists of a nonabelian free group. There are two types of floors, {\em surface floors} and {\em Abelian floors}.  
The corresponding notion in the work of Kharlampovich-Myasnikov is the notion of an {\em NTQ group}, 
i.e. the coordinate group of a nondegenerate triangular quasiquadratic system of equations (see \cite[Definition 9]{MR1610660}).   

Towers can be thought of as groups equipped with construction instructions. The instructions consist of the nonabelian free group of the ground floor,  the additional floors, and finally the way and order each floor is added to the already constructed tower. 
Towers also admit closures. The closure of a tower is obtained by augmenting the Abelian floors of the original tower in a way that the original floor sits as a finite index subgroup in the augmented floor. 
It is still a tower with the same number, type and order of floors,  
moreover it contains the original tower as a subgroup. When no Abelian floors take part in the construction of a tower, then we call it hyperbolic. As we will see in the sequel, hyperbolic towers are easier to handle exactly because they coincide with any of their closures.   

In addition, we will define multiplets of a tower, namely we will add identical floors on the same basis multiple times. The end product is still a tower and in addition it can be seen as a star of groups.

Finally, we will "symmetrize" the closure of a multiplet of a tower in a way the original Abelian floors embed identically as finite index subgroups in all copies of the multiplet.

\subsection{The construction of a tower}

We assume some familiarity with Bass-Serre theory \cite{MR1954121}. We start by defining the notion of a surface floor. 

\begin{definition} [Surface-type vertex]\label{stype}
A vertex $v$ of a graph of groups $\Gamma$  
is called a \emph{surface-type vertex} if the following conditions hold:
\begin{itemize}
 \item the group $G_v$ carried by $v$ is the fundamental group of a compact surface $\Sigma$ (usually with boundary), with Euler characteristic $\chi(\Sigma)<0$;
 
  \item incident edge groups are maximal  boundary subgroups of $\pi_1(\Sigma)$, and this induces a bijection
 between the set of incident edges and the set of boundary components of $\Sigma$.
\end{itemize}
\end{definition}

\begin{definition}[Exceptional surfaces] \label{except}
Four  hyperbolic surfaces with $\chi(\Sigma)=-1$ are considered exceptional because their mapping class group is ``too small'' (they do not carry pseudo-Anosov diffeomorphisms):  the thrice-punctured sphere, 
the twice-punctured projective plane, the once-punctured Klein bottle,  and  the closed non-orientable surface of genus 3.
\end{definition}

\begin{definition}[Centered splitting] \label{centspl}
A centered splitting of $G$ is a  graph of groups decomposition $G=\pi_1(\Gamma)$ such that the vertices of $\Gamma$
are $v,v_1,\dots,v_m$, with $m\ge1$, where $v$ is surface-type and every edge joins $v$ to some $v_i$ (see Figure \ref{HypFloor}).

The vertex $v$ is called the \emph{central vertex} of $\Gamma$. The vertices $v_1,\dots,v_n$ are the \emph{bottom vertices}, and we denote by $H_i$ 
 the {\em bottom group} carried by $v_i$. The base of $\Gamma$ is the abstract free product  $H=H_1*\dots* H_m$.

The centered splitting $\Gamma$ is simple if $i=1$, and non-exceptional if the surface $\Sigma$ is non-exceptional. 

 \begin{figure}[ht!] 
\centering
\includegraphics[width=.6\textwidth]{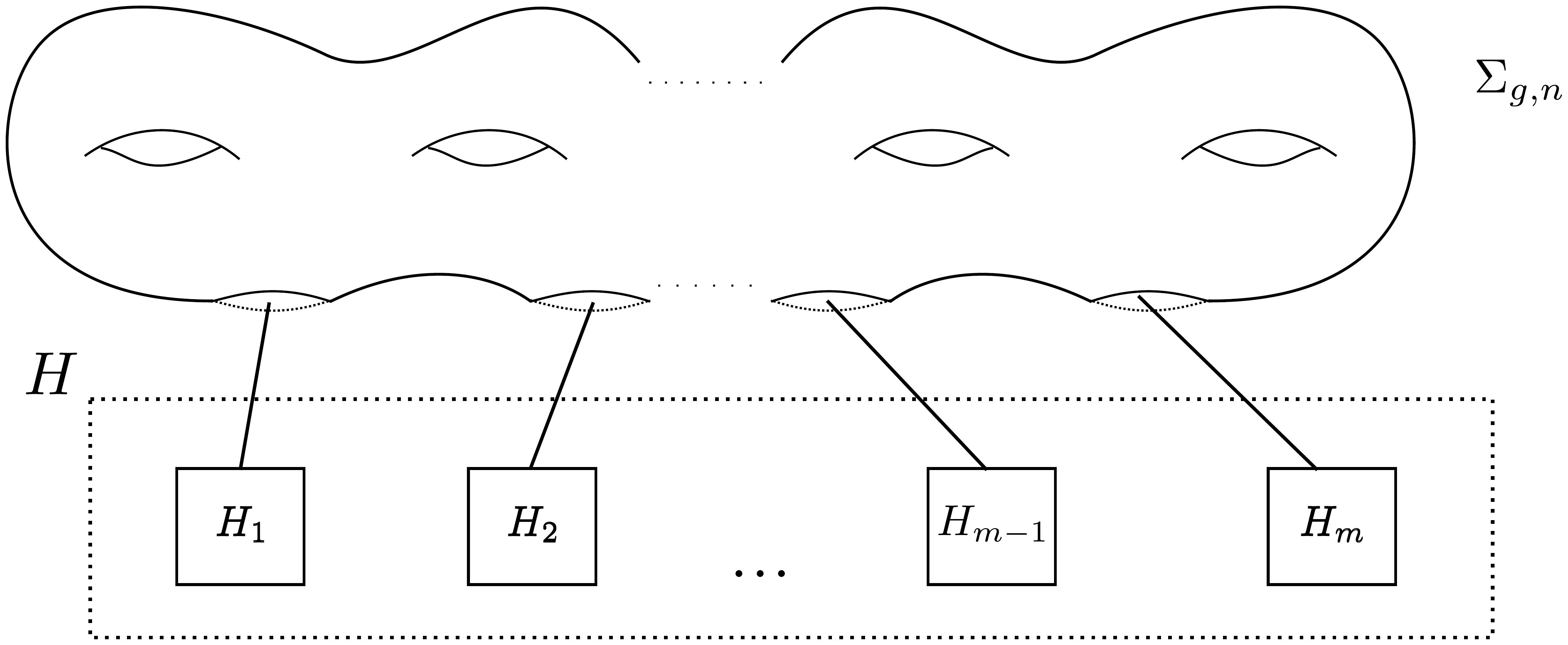}
\caption{A centered splitting}
\label{HypFloor}
\end{figure}

\end{definition}


\begin{definition}[Surface floor]\label{SurfaceFloor}
Let $G$ be a group and $H$ be a nonabelian subgroup of $G$. Then $G$ has the structure of a surface floor over $H$,  
if the following conditions hold:  
\begin{itemize}
\item the group $G$ admits a simple non-exceptional centered splitting with bottom group $H$; and 
\item there exists a retraction $r:G\to H$ that sends the group carried by the central vertex of $\Gamma$ to a non Abelian image.
\end{itemize}

\end{definition}

An Abelian floor is defined in a similar way. 

\begin{definition}
Let $G$ be a group and $H$ be a subgroup of $G$. Then $G$ has the structure of an Abelian floor over $H$,  
if $G$ admits a splitting as an amalgamated free product $H*_E(E\oplus \Z^m)$, where (the image of) $E$ is a maximal Abelian subgroup of $H$ and $\Z^m$ is a free Abelian group of rank $m$ (see Figure \ref{AbFloor}). 

We call the image of $E$ in $H$ the peg of the Abelian floor.
\end{definition}

\begin{figure}[ht!]
\centering
\includegraphics[width=.3\textwidth]{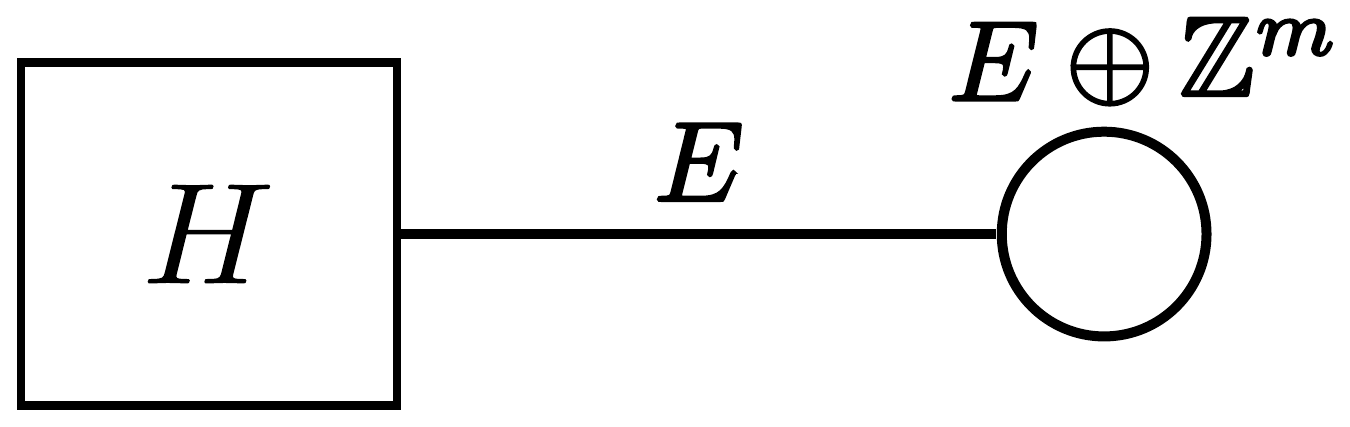}
\caption{An Abelian floor}
\label{AbFloor}
\end{figure}

We can now define towers.

\begin{definition}\label{Tower}
A group $G$ has the structure of a tower (of height $m$) over a nonabelian subgroup $\F$ if there exists a (possibly trivial) free group $\F_n$ and a sequence $G=G^m>G^{m-1}>\ldots>G^0=\F*\F_n$ such that for each $i$, $0\leq i<m$, 
either $G^{i+1}$ is a surface floor or an Abelian floor over $G^i$. 
\end{definition}

 \begin{figure}[ht!] 
\centering
\includegraphics[width=.3\textwidth]{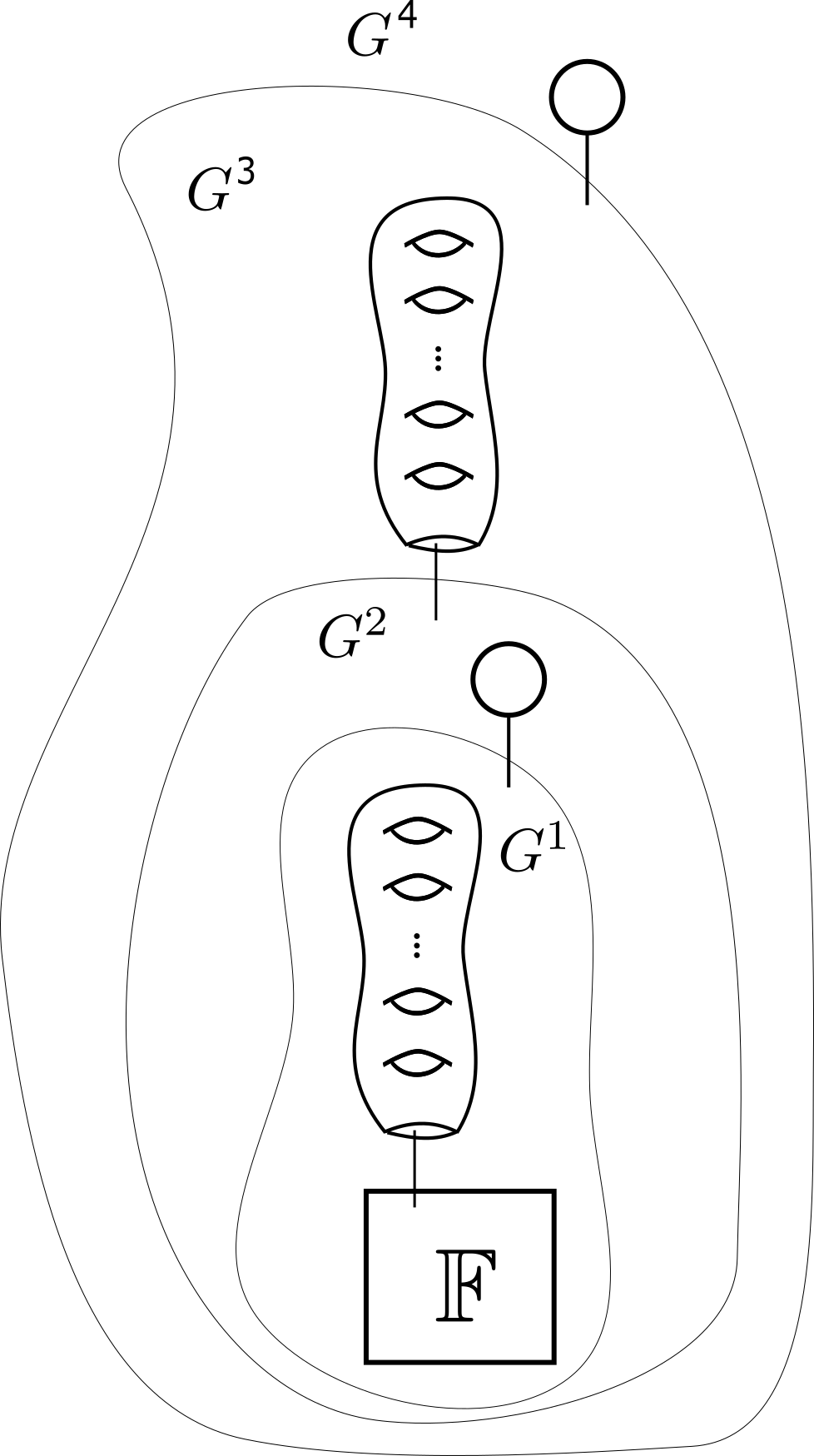}
\caption{A tower of height 4.}
\label{TowerPic}
\end{figure}

\begin{remark}\label{SimplevsExtended}
In this paper we will use simple towers instead of "extended" ones, i.e. towers for which the base of surface floors may be disconnected (for a thorough discussion and connections between the different types of towers we refer the reader to Section 4.8 of \cite{Towers}). 

We note that Definition \ref{SurfaceFloor} works for a surface floor with disconnected base after choosing a particular subgroup of $G$ which is the free product of vertex stabilizers of (vertices in the orbits of) the distinct bottom vertices in the corresponding Bass-Serre tree. 

This is without loss of generality since, by \cite[Proposition 5.2]{Towers}, if $G$ is an extended surface floor over the subgroup $H:=H_1*\ldots *H_m$, and $r:G\rightarrow H$ is the retraction witnessing it, then $G*\F_{m-1}$, with $\F_{m-1}:=\langle t_2,\ldots, t_m\rangle$, is a simple surface floor (i.e. with a single base vertex) over $\tilde H:=H_1*(H_2)^{t_2}*\ldots *(H_m)^{t_m}$. The particular construction will be useful in the sequel. 

We consider the map $f:H\rightarrow G*\F_{m-1}$, which is the identity on $H_1$ and conjugation by $t_i$ on $H_i$, for $2<i\leq m$. The image of $f$ in $G*\F_r$ is $\tilde H$. The group $G*\F_{m-1}$ has a structure of a (simple) surface floor with the same surface type vertex and amalgamation embeddings as the original (disconnected) centered splitting only conjugated by the corresponding $t_i$'s on the side of the base vertex group. The retraction from $G*\F_{m-1}$ onto $\tilde H$ is given by the map that agrees with $f\circ r$ on $G$ and kills $\F_{m-1}$.

We will further justify our choice of simple towers over extended ones after defining test sequences and the Diophantine envelope of a first-order formula in Section \ref{TestSequences}.  
\end{remark}

In \cite{MR2605174} (or rather in \cite{MR2605175}) it was proved that the class of limit groups coincides with the class of constructive limit groups \cite[Definition 1.14]{MR2605174}. Thus, one can easily deduce the following fact. 

\begin{fact}
If $G$ has the structure of a tower, then $G$ is a limit group.
\end{fact}

It will be useful to collect the information witnessing that a group $G$ has the structure of a tower. Thus we define:

\begin{definition}
Suppose $G$ has the structure of a tower (of height $m$). Then the tower $\mathcal{T}(G,\F)$ is the group $G=G^m$ together with the following collection of data:
$$((\mathcal{G}(G^1,G^0),r_1),(\mathcal{G}(G^2,G^1),r_2),\ldots,(\mathcal{G}(G^m,G^{m-1}),r_m))$$
where the splitting $\mathcal{G}(G^{i+1},G^i)$ is the splitting that witnesses that $G^{i+1}$ has the structure of a surface or Abelian floor over $G^i$ and $r_i$ is the corresponding retraction. 
In addition, for each floor $\mathcal{G}(G^{i+1},G^{i})$ the data includes the choice of a maximal subtree of the graph of groups, that contains the vertices stabilized by $G^{i+1}$ and $G^i$.

Finally, if $\mathcal{T}$ is a tower (of height $m$), then we denote by $\mathcal{T}_i$, for $i\leq m$, the tower that consists of the first $i$ floors of $\mathcal{T}$.
\end{definition}

\begin{remark}\label{AbelRetract}
When a floor $(\mathcal{G}(G^i,G^{i-1}),r_i)$ is Abelian with free Abelian vertex group $\Z^r\oplus\Z^k:=\langle a_1, \ldots a_r, z_1, \ldots, z_k\rangle$, the retraction $r_i:G^i\rightarrow G^{i-1}$, is the map induced by fixing $a_1, \ldots, a_r$ and sending each $z_i$ to $a_1$.  
\end{remark}

\begin{definition}
Let $\mathcal{T}(G,\F)$ be a tower of height $m$ and $G^m>G^{m-1}>\ldots>G^1>G^0:=\F*\F_n$ the sequence of the groups of its floors. We define the height of $g\in G$ to be the least $i$ 
such that $g\in G^i\setminus G^{i-1}$. 

In addition, for each $g\in G$ we define a nested analysis of $g$ with respect to $\mathcal{T}$ recursively as follows:
\begin{itemize}
\item if $g$ has height $0$, then $g$ admits a canonical form $g_1g_2\ldots g_k$ with respect to $\F*\F_n$ and the elements $\{g_1, g_2, \ldots, g_k\}$ is the nested analysis of $g$;
\item suppose the nested analysis has been defined for all elements $g$ of height $i$; 
\item if $g$ has height $i+1$, then it has a reduced form, $g_1g_2\ldots g_k$ with respect to the splitting $\mathcal{G}(G^{i+1}, G^i)$ and the (given) maximal subtree. The nested analysis of $g$ is the union 
of the $g_i$'s in $G^{i+1}\setminus G^i$ together with nested analyses of each $g_{i_1}, \ldots, g_{i_\ell}$ for the elements $g_{i_j}$, for $j\leq \ell$ that belong to $G^i$.  
\end{itemize}

\end{definition}

Note that the nested analysis $An_{\mathcal{T}}(g)$ of an element $g$ contains elements only from vertex groups of the splittings $\mathcal{G}(G^{i+1}, G^i)$, for $i\geq 1$, Bass-Serre elements,  and also elements from $\F$ and $\F_n$. Furthermore, we assume that the nested analysis of an element remembers the information to reconstruct the element from its components. The latter will be useful when we may want to change some elements in the analysis of $g$ (respecting the rules of reduced forms) in order to obtain some element $g'$ that will differ from $g$ only in the particular places we changed.  

It will be convenient to collect the elements of a nested analysis divided in the various levels of the tower that they belong to. 

\begin{definition}
Let $\mathcal{T}(G,\F)$ be a tower of height $m$ and $G^m>G^{m-1}>\ldots>G^1>G^0:=\F*\F_n$ the sequence of the groups of its floors. Let $g\in G$ and $An_{\mathcal{T}}(g)$ be a nested analysis of $g$ with respect to $\mathcal{T}$. For each $-2\leq i< m$, we define: 

\[ 
An^i_\mathcal{T}(g)= \left\{
\begin{array}{ll}
      \textrm{the elements in $An_{\mathcal{T}}(g)$ that belong to $G^{i+1}\setminus G^i$}, & \ \textrm {if} \ 0\leq i<m  \\
      \textrm{the elements in $An_{\mathcal{T}}(g)$ that belong to $\F$,} & \ \textrm{if}  \ i=-1\\
      \textrm{the elements in $An_{\mathcal{T}}(g)$ that belong to $\F_n$,} & \ \textrm{if}  \ i=-2\\
\end{array} 
\right. 
\]
\end{definition}
 
Since reduced forms with respect to $\mathcal{G}(G^{i+1}, G^i)$ are not unique, an element  $g\in G$ does not have a unique nested analysis with respect to $\mathcal{T}(G, \F)$. This fact will be supressed in the sequel since we will always work with a particular reduced form.

A tower in which no Abelian floor occurs in its construction is called a {\em hyperbolic tower} (or {\em regular NTQ group} in the terminology of Kharlampovich-Myasnikov) and it is, by an application of \cite{MR1152226}, a hyperbolic group. 

For a tower $\mathcal{T}(G,\F)$, it will be convenient to assume some further properties on Abelian floors. These assumptions will make proofs in later sections technically less involved (see subsection \ref{TowersEquiv}), but are not essential for the results. We would like $\mathcal{T}$ to satisfy the following: 
\begin{itemize}
\item  if a peg can be conjugated into $\F$, then it is a subgroup of $\F$;
\item no peg embeds to a conjugate of a peg of a higher floor;
\item the Abelian floors that correspond to pegs that belong to $\F$ always come before any other floors in the construction of the tower.
\end{itemize}

Not all towers satisfy these properties, but we can always change a tower to satisfy them. Indeed, changing a peg of an Abelian floor to a conjugate of itself is equivalent to changing the fundamental domain of the Bass-Serre action on the tree corresponding to the splitting of the particular floor. Hence, we can always consider pegs up to conjugation. Moreover, if a (conjugate of a) peg belongs to a lower floor, we can change the order of the floors by moving the Abelian floor that contains this peg lower. Finally, if we have two consecutive Abelian floors, where the peg of the higher floor is (a conjugate of) the maximal Abelian group that contains the peg of the lower floor, we can "merge" the two floors into one. More concretely: 

\begin{lemma}\label{ChangeFloorOrder}
Let $\mathcal{T}(G, \F)$ be a tower and $(\mathcal{G}(G^{i+1}, G^i), r_{i+1})$ its $i$-th floor. Suppose $(\mathcal{G}(G^{i+2},$ $G^{i+1}), r_{i+2})$ is an 
Abelian floor and its peg, $A<G^{i+1}$, is a subgroup of $G^{i}$. Then we can exchange the $i$ and $i+1$ floors to obtain a new tower $\hat{\mathcal{T}}(G,\F)$. 
\end{lemma}
\begin{proof}
We consider the tower $\mathcal{T}_{i-1}$, i.e. the first $i-1$ floors of $\mathcal{T}$. Suppose $E_1\oplus \Z^m$ is the free Abelian group of $(\mathcal{G}(G^{i+2}, G^{i+1}), r_{i+2})$. Since $A$ (the image of $E_1$ in $G^{i+1}$ under, $f$, the amalgamation embedding) is a subgroup of $G^i$ and maximal Abelian in $G^{i+1}$ (hence in $G^i$), we can add the $i+1$-th floor of $\mathcal{T}$ to $\mathcal{T}_{i-1}$. We call this 
tower $\hat{\mathcal{T}}_i$. Its corresponding group is $\hat{G}^{i+1}=\langle G^i, E_1\oplus \Z^m \ | \ e=f(e), \ e\in E_1 \rangle$ and its last floor is $\hat{\mathcal{G}}(\hat{G}^{i+1}, G^i)$. We next add the $i+1$-th floor of $\mathcal{T}$ to $\hat{\mathcal{T}}_i$. We take cases according to whether $(\mathcal{G}(G^{i+1}, G^i), r_{i+1})$ is Abelian or surface type. 
\begin{itemize}
\item Suppose $(\mathcal{G}(G^{i+1}, G^i), r_{i+1})$ is of surface type. Suppose the surface type vertex carries the fundamental group of an (orientable) surface of genus $g$ with $b$ boundary components, i.e. $\langle a_1, \ldots, a_b, u_1, v_1, \ldots, u_g, v_g \ | \ a_1\cdots a_b=[u_1,v_1]\cdots[u_g,v_g] \rangle$ (the case of a nonorientable surface is similar). Then $G^{i+1}$ has the following presentation:
$$G^{i+1}=\left\langle
    \begin{matrix} G^i \\ 
    a_1,\dots,a_b\\
       t_1,\dots,t_b\\
      u_1,v_1,\dots,u_g,v_g
    \end{matrix}\ \middle|\ 
       \begin{matrix}
      a_1\cdots a_b=[u_1,v_1]\cdots[u_g,v_g]\\
      t_j h_j t_j^{-1}=a_j,\ h_j\in G^i \ \textrm{for}  \ j\leq b \\
      t_1=1 
  \end{matrix}
\right \rangle$$

We define $(\hat{\mathcal{G}}(\hat{G}^{i+2}, \hat{G}^{i+1}), \hat{r}_{i+2})$ to be a splitting of surface type. Its surface type vertex carries the same fundamental group as the fundamental group of the surface type vertex of $(\mathcal{G}(G^{i+1}, G^i), r_{i+1})$ and has the following presentation:

$$\hat{G}^{i+2}=\left\langle
    \begin{matrix} \hat{G}^{i+1} \\ 
    a_1,\dots,a_b\\
       t_1,\dots,t_b\\
      u_1,v_1,\dots,u_g,v_g
    \end{matrix}\ \middle|\ 
       \begin{matrix}
      a_1\cdots a_b=[u_1,v_1]\cdots[u_g,v_g]\\
      t_j h_j t_j^{-1}=a_j,\ h_j\in \hat{G}^{i+1} \ \textrm{for}  \ j\leq b \\
      t_1=1 
  \end{matrix}
\right \rangle$$

Note that $G^i$ is a subgroup of $\hat{G}^{i+1}$, hence it is possible to "glue" the surface type vertex along its boundary components on the same elements and in addition we can retract $\hat{G}^{i+2}$ to $\hat{G}^{i+1}$ by the morphism $\hat{r}_{i+1}:\hat{G}^{i+2}\rightarrow \hat{G}^{i+1}$ defined as the identity on $\hat{G}^{i+1}$ and equal to $r_i$ on $\langle a_1, \ldots, a_b, t_1, \ldots, t_b, u_1, v_1, \ldots, u_g, v_g\rangle$. The image of $\langle a_1, \ldots, a_b, u_1, v_1, \ldots, u_g, v_g\rangle$ under $\hat{r}_{i+1}$ is contained in $G^{i}$ and it is nonabelian, hence it is nonabelian in $\hat{G}^{i+1}$.
 
Finally, by comparing presentations, we get $\hat{G}^{i+2}=G^{i+2}$.

\item Suppose $(\mathcal{G}(G^{i+1}, G^i), r_{i+1})$ is of Abelian type. Let $E_2\oplus\Z^n$ be the free Abelian vertex group of the splitting and $B<G^i$ the peg. We first observe that since no nontrivial element of $B$ can be conjugated into $A$ by an element of $G^i$, no nontrivial element of $B$ commutes with a conjugate in $\hat{G}^{i+1}$ of a nontrivial element of $A$. Indeed, suppose, for a contradiction, that $\gamma\in \hat{G}^{i+1}$, is such that $b$ commutes with $a^\gamma$, for some $b\in B\setminus\{1\}$ and $a\in A\setminus\{1\}$. Then, since $A\oplus\Z^m$ is maximal Abelian in $\hat{G}^{i+1}$ we have that $b\in  (A\oplus\Z^m)^\gamma$. Now we have that $b$ fixes the path between $x$ and $\gamma.y$, where $x$ is the vertex stabilized by $G^i$ and $y$ the vertex stabilized by $E\oplus\Z^m$ in the tree that corresponds to the splitting $\hat{\mathcal{G}}(\hat{G}^{i+1}, G^i)$. But any such path contains an edge stabilized by $A^g$, for some $g\in G^i$. In particular, $b$ belongs to $A^g$, hence $b$ can be conjugated to $A$ in $G^i$, a contradiction. 
We next show that $B$ is maximal Abelian in $\hat{G}^{i+1}$. Suppose, for a contradiction, not. Let $g\in \hat{G}^{i+1}\setminus G^i$ be an element that commutes with $B$. Then, by our previous argument, neither $g$ nor any nontrivial element of $B$ can be conjugated in $A$ (in $\hat{G}^{i+1}$). Hence, by Lemma \ref{Commutation}, we get that $g$ belongs to $G^i$, the same factor subgroup as $B$, a contradiction.        

In this case, we define $(\hat{\mathcal{G}}(\hat{G}^{i+2}, \hat{G}^{i+1}), \hat{r}_{i+2})$ to be a splitting of Abelian type. Its free Abelian vertex is $E_2\oplus\Z^n$ and the amalgamation embedding, $g$, maps $E_2$ isomorphically onto $B$ in the same way $E_2$ is mapped into $G^i$ in the splitting $(\mathcal{G}(G^{i+1}, G^i), r_{i+1})$. The group $\hat{G}^{i+2}$ has the following presentation:
$$ 
\hat{G}^{i+2}=\left\langle
    \begin{matrix} \hat{G}^{i+1} \\ 
    E_2, z_1, \ldots, z_n \\
    \end{matrix}\ \middle|\ 
       \begin{matrix}
      [e_2,z_i]=1, \ i\leq n, \ e_2\in E_2 \\
      [z_i,z_j]=1, \ i\neq j \\
      e_2=g(e_2), \ e_2\in E_2 
  \end{matrix}
\right \rangle$$
Again, by comparing presentations we conclude that $\hat{G}^{i+2}=G^{i+2}$. 
\end{itemize}
Finally, to construct $\hat{\mathcal{T}}(\hat{G}, \F)$, since in each of the above cases the group $\hat{G}^{i+2}$ coincides with the group $G^{i+2}$, 
we add the floors $(\mathcal{G}(G_{j+1}, G_j), r_{j+1})$, for $i+1<j<m$ on top of $\hat{G}^{i+2}$. It is immediate that $\hat{G}$ is $G$.
\end{proof}

\begin{lemma}\label{MergeAbelianFloors}
Let $\mathcal{T}(\mathcal{G}, \F)$ be a tower of height $m$ and $(\mathcal{G}(G^{i+1}, G^i), r_{i+1})$, $(\mathcal{G}(G^{i+2},$ $G^{i+1}), r_{i+2})$ be consecutive Abelian floors for some $i<m-1$. If the peg of the $i$-th floor 
can be conjugated into the peg of the $i+1$-th floor, then there exists a tower $\hat{\mathcal{T}}(G, \F)$ of height $m-1$, which is obtained from $\mathcal{T}(G, \F)$ by merging the $i$-th and $i+1$-th floors together, while  
keeping the rest of the floors identical.    
\end{lemma}
\begin{proof}
Suppose $E\oplus \Z^m:=\langle e_1, \ldots, e_k, z_1, \ldots, z_m\rangle$,  is the free Abelian vertex group in the $i$-th floor and $D\oplus Y^n:=\langle d_1, \ldots, d_\ell, y_1, \ldots, y_n\rangle$, is the free Abelian vertex in the $i+1$-th floor. Let  $A<G^i$ be the peg of the $i$-th floor, i.e. the image of $E$ in $G^{i}$ by the amalgamation embedding $f$ and $B<G^{i+1}$ the peg of the $i+1$-th floor, i.e. the image of $D$ in $G^{i+1}$ by the amalgamation embedding $g$. 
By the hypothesis of the lemma there is $\gamma\in G^{i+1}$ such that $A^\gamma<B$. Since $B$ is maximal Abelian in $G^{i+1}$ we must have that $B$ is $(A\oplus \Z^m)^\gamma$. In particular, $\ell=k+m$, and $g$ sends $d_i$ to $e_i^\gamma$ for $i\leq k$ and to $z_{k-i}^\gamma$ for $k<i\leq \ell$. Hence, $G^{i+2}$ admits the following presentation: 

$$ 
G^{i+2}=\left \langle
    \begin{matrix} G^{i} \\ 
    e_1, \ldots, e_k, z_1, \ldots, z_m \\
    d_1, \ldots, d_\ell, y_1, \ldots, y_n\\
    \end{matrix}\ \middle| \ 
       \begin{matrix}
       [e_i,e_j]=1, \ \textrm{for} \ i, j\leq k, \ [z_i,z_j]=1, \ \textrm{for} \ i\neq j\leq m \  \\ 
      [e_i,z_j]=1, \ \textrm{for} \ i\leq k, \ j\leq m, 
      e_i=f(e_i), \textrm{for} \ \ i\leq k  \\ 
       [d_i,d_j]=1, \ \textrm{for} \ i, j\leq \ell, \ 
      [d_i,y_j]=1, \ \textrm{for} \ i\leq \ell, \ j\leq n \\
      [y_i,y_j]=1, \ \textrm{for} \ i\neq j\leq n, \ 
      d_i= e_i^\gamma, \ \textrm{for} \ i\leq k, \\
      d_i= z_{k-i}^\gamma,  \ \textrm{for} \ k< i\leq \ell
  \end{matrix}
\right \rangle$$

Now, using Tietze transformations (see \cite[Chapter 2, Section 6]{MR2396717}), we change the presentation to the following:

$$ 
G^{i+2}=\left \langle
    \begin{matrix} G^{i} \\ 
    e_1, \ldots, e_k, z_1, \ldots, z_m \\
    y_1, \ldots, y_n\\
    \end{matrix}\ \middle| \ 
       \begin{matrix}
       [e_i,e_j]=1, \ \textrm{for} \ i, j\leq k, \ [z_i,z_j]=1, \ \textrm{for} \ i\neq j\leq m \  \\ 
      [e_i,z_j]=1, \ \textrm{for} \ i\leq k, \ j\leq m, 
      e_i=f(e_i), \textrm{for} \ \ i\leq k  \\ 
      [y_i,y_j]=1, \ \textrm{for} \ i\neq j\leq n, \ [e_i,y_j^{\gamma^{-1}}]=1, \ \textrm{for} \ i\leq k, \ j\leq n \\ 
      [z_i,y_j^{\gamma^{-1}}]=1, \ \textrm{for} \  i\leq m, \ j\leq n  
    
  \end{matrix}
\right \rangle$$

We can now see $G^{i+2}$ as the amalgamated free product $$G^i*_{E}\langle e_1, \ldots, e_k, z_1, \ldots, z_m, y_1^{\gamma^{-1}}, \ldots, y_n^{\gamma^{-1}}\rangle$$ 
Since $\langle z_1, \ldots, z_m, y_1^{\gamma^{-1}}, \ldots, y_n^{\gamma^{-1}}\rangle$ is isomorphic to $\Z^{m+n}$ and (the image of) $E$ is maximal Abelian in $G^i$, we can 
give $G^{i+2}$ the structure of an Abelian floor over $G^i$. We call this floor $\hat{\mathcal{G}}(\hat{G}^{i+1}, G^i)$, where $\hat{G}^{i+1}=G^{i+2}$ and we continue the construction of $\hat{\mathcal{T}}(\hat{G}, \F)$ 
with adding the floors of $\mathcal{T}$, $(\mathcal{G}(G^{j+1}, G^j), r_{j+1})$, for $i+2\leq j<m$, on top of $\hat{G}^{i+1}$. It is immediate that $\hat{G}$ is $G$.

\end{proof}

In light of the above lemmas we, from now on, assume that all towers satisfy the properties listed after Remark \ref{AbelRetract}. When we want to emphasize the dependence of a result on these properties we will write that the tower $\mathcal{T}$ satisfies the convention. We will further justify this assumption when we define test sequences and their properties in Section \ref{TestSequences}.

We record the following easy consequence of the above convention for later use.

\begin{lemma}\label{CyclicEdges}
Let $\mathcal{T}(G,\F)$ be a tower that satisfies the convention. Then all edge groups in the floor splittings $(\mathcal{G}(G^{i+1}, G^i), r_{i+1})$, for $i<m$, are cyclic. 
\end{lemma}
\begin{proof}
The statement is trivially true for edge groups of surface floors since a maximal boundary subgroup is cyclic. We need to deal with Abelian floors. 
Since towers are limit groups and maximal Abelian subgroups in limit groups are centralizers of nontrivial elements, we need to prove that any nontrivial 
element of $G^i$ that does not commute with any conjugate of a peg that belongs to $G^{i-1}$ (as this is part of our convention) has cyclic centralizer in $G^i$. 

Indeed, for $i=0$, $G^0$ is a free group, hence any nontrivial element has cyclic centralizer. Assume the statement to be true for $i=n$, we show it is true for $n+1$. 
Consider, a nontrivial element $a\in G^{n+1}$ that does not commute with any conjugate of a peg introduced thus far. Without loss of generality $a$ is not a proper power in $G^{n+1}$, otherwise 
we can replace $a$ with its largest root (recall that the centralizer of $a$ is a priori a finite rank free Abelian group). We take cases according to whether $\mathcal{G}(G^{n+1}, G^n)$ is a surface or Abelian floor.

\begin{itemize}
\item Suppose $\mathcal{G}(G^{n+1}, G^n)$ is a surface floor, with surface vertex group $\pi_1(\Sigma_{g,k})$. Then no new peg is introduced in $G^n$. If $a$ acts hyperbolically in the tree that corresponds to the surface floor splitting, then any element, $c$, that commutes with $g$ share the same axis with $g$. Since translation lengths are integer valued and the spitting is 2-acylindrical we get $g^q=c^p$, for some integers $p,q$, and since $g$ does not have proper roots we have that $g^k=c$, hence the centralizer of $g$ in $G^{n+1}$ is cyclic. 

If $a$ fixes a single vertex in the tree (either a translate of the vertex, $x$, stabilized by $G^n$ or a translate of the vertex, $y$, stabilized by $\pi_1(\Sigma_{g,k})$), then any element, $c$, that commutes with $a$ fixes the same vertex. If this vertex is in the orbit of $y$, then the vertex group is a free group, hence the centralizer of $a$ is cyclic. If, on the other hand, this vertex is in the orbit of $x$, then, this time by the induction hypothesis, the centralizer of $a$ is cyclic again.

Finally, $a$ may fix an edge but not a segment of length greater than $2$. In addition, up to conjugating $a$, we may assume that it fixes $x$. For any $b$ that commutes with $a$ we get that $ab^{-1}x=b^{-1}x$. Therefore, 
either $b$ fixes $x$ or it sends $x$ to a neighboring $y$, but the orbits of $x$ and $y$ under the action of $G^{n+1}$ are disjoint, hence $b$ fixes $x$ as well and the result follows by the induction hypothesis as before.    

\item Suppose $\mathcal{G}(G^{n+1}, G^n)$ is an Abelian floor and $\Z\oplus\Z^r$ the free Abelian vertex group. In this case a new peg is introduced in $G^n$, but by our 
assumption $a$ does not commute with any conjugate of it. As before if $a$ acts hyperbolically in the tree corresponding to the splitting, then its centralizer is cyclic. 

If $a$ fixes a unique vertex then this cannot carry a vertex group which is a conjugate of $\Z\oplus \Z^r$, therefore it necessarily fixes a vertex in the orbit of 
the vertex stabilized by $G^n$. In this case, any element of $G^{n+1}$ that commutes with $a$ also fixes the same vertex, hence, by the inductive hypothesis, the centralizer of $a$ is cyclic. 

Finally, if $a$ fixes an edge, then it commutes with a conjugate of the newly introduced peg, which is impossible by the hypothesis.    
  
\end{itemize}
\end{proof}

\begin{definition}[Abelian Pouch]
Let $\mathcal{T}(G, \F)$ be a tower and $G^m>G^{m-1}>\ldots>G^1>G^0:=\F*\F_n$ the sequence of the groups of its floors. Let $j<m$ be the least number such that $(\mathcal{G}(G^{j+1}, G^j), r_{j+1})$ is a surface floor or an Abelian floor whose peg nested analysis contains an element of $\F_n$. Then $\tilde{G}^j$ is the group obtained from $\F$ by adding on top of it the first $j-1$ (Abelian) floors of $\mathcal{T}$ and is called the Abelian pouch of $\mathcal{T}$.  
\end{definition} 

Note that, in the above definition, $\tilde{G}^j$ differs from $G^j$ by a free factor, i.e. $G^j=\tilde{G}^j*\F_n$.

\begin{figure}[ht!]
\centering
\includegraphics[width=.43\textwidth]{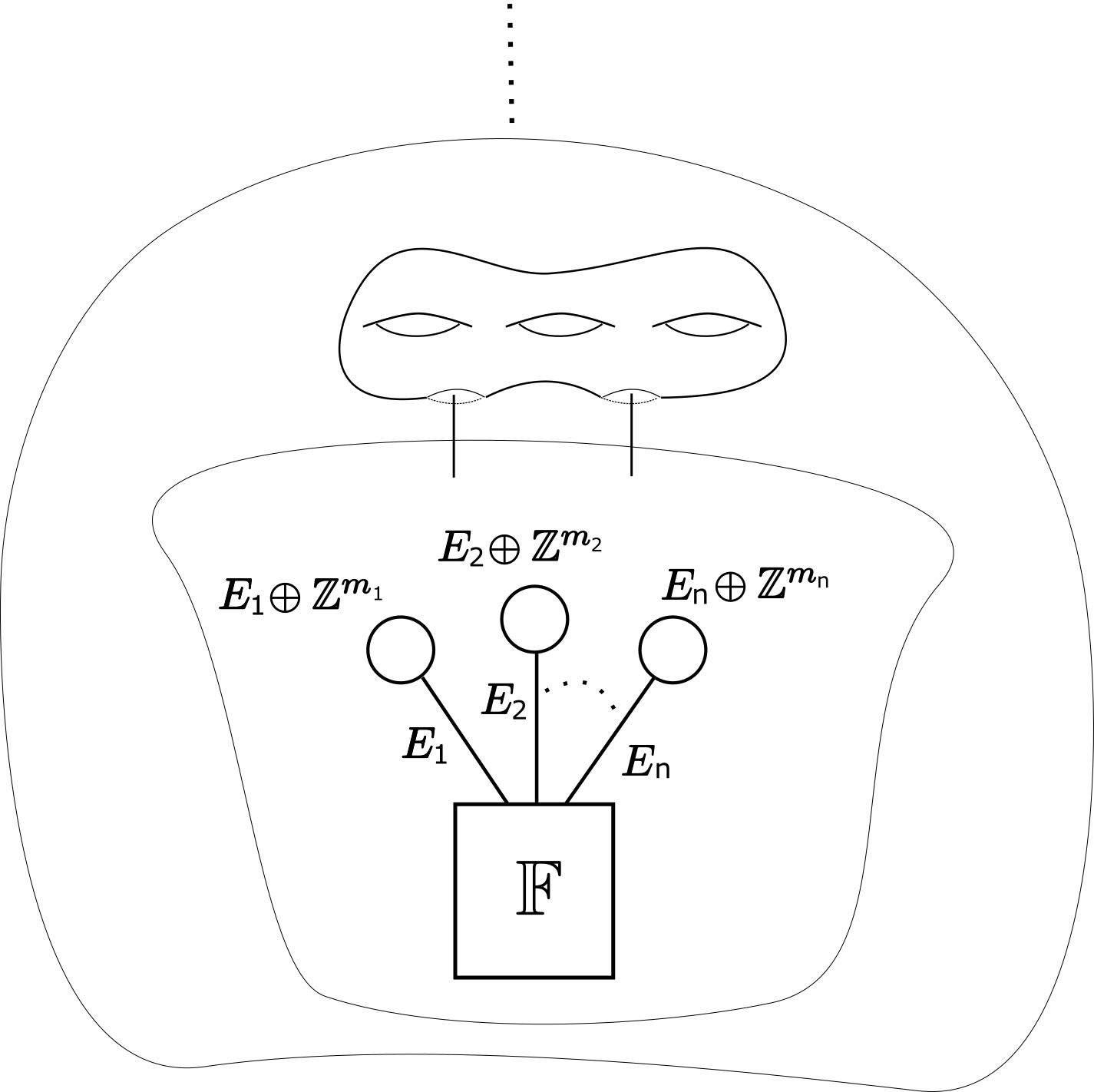}
\caption{The Abelian pouch}
\label{AbPouch}
\end{figure}

\subsection{Multiplets of towers}

We fix a tower $\mathcal{T}(G, \F)$ and we construct a new one by adding multiple times the floors of $\mathcal{T}$ on the ground floor $\F$. 
The group that corresponds to the end product of this construction admits a graph of groups decomposition as a star of groups where the common subgroup is the Abelian 
pouch of $\mathcal{T}$.  

\begin{definition}\label{MultiAbF}
Let $\mathcal{T}(G, \F)$ be a tower of height $m$ and $G^m>G^{m-1}>\ldots>G^1>G^0:=\F*\F_n$ the sequence of the groups of its floors. Suppose the Abelian pouch of  $\mathcal{T}$ is $\F$. 
Then the $N$-multiplet of $\mathcal{T}$, $\mathcal{T}^N$, is the tower with $m\cdot N$ floors, constructed recursively as follows:
\begin{itemize}
\item The ground floor of $\mathcal{T}^N$ is $\F*\F_n*\F_n*\ldots*\F_n$, where $\F_n$ is added $N-1$ times to $\F*\F_n$;
\item the first $m$-floors of $\mathcal{T}^N$ are identical to the floors of $\mathcal{T}$, only added on top of $(\F*\F_n)*\F_n*\ldots*\F_n$;
\item suppose for $i<N$, the $i$-multiplet of $\mathcal{T}$, $\mathcal{T}^i$, has been constructed;  
\item the floors from $im+1$ to $(i+1)m$, for $1\leq i<N$, are identical to the floors of $\mathcal{T}$, only added on top of $\mathcal{T}^i$ and with the following amalgamation embedding modifications: 
\begin{enumerate}
\item Let $g$ be the element that generates the image of an amalgamation embedding in the original tower;
\item We replace each element in the nested analysis of $g$ with the corresponding element in the $i$-th copy of $\F_n$, and the $i$-th copies of the floors constructed thus far within the range $im+1$ to $(i+1)m$. We use these elements and the memory of how $g$ was built in order to get a new element $g'$ which will be the image of the generator of the edge group under the new amalgamation embedding.
\end{enumerate} 
\end{itemize}
\end{definition}

The group that corresponds to the $N$-multiplet of a tower $\mathcal{T}(G,\F)$ with Abelian pouch equal to $\F$, is the star of groups $G_N=*_{\F}\{G_i\}_{i\leq N}$ with each $G_i$ isomorphic to $G$.

\begin{figure}[ht!]
\centering
\includegraphics[width=.5\textwidth]{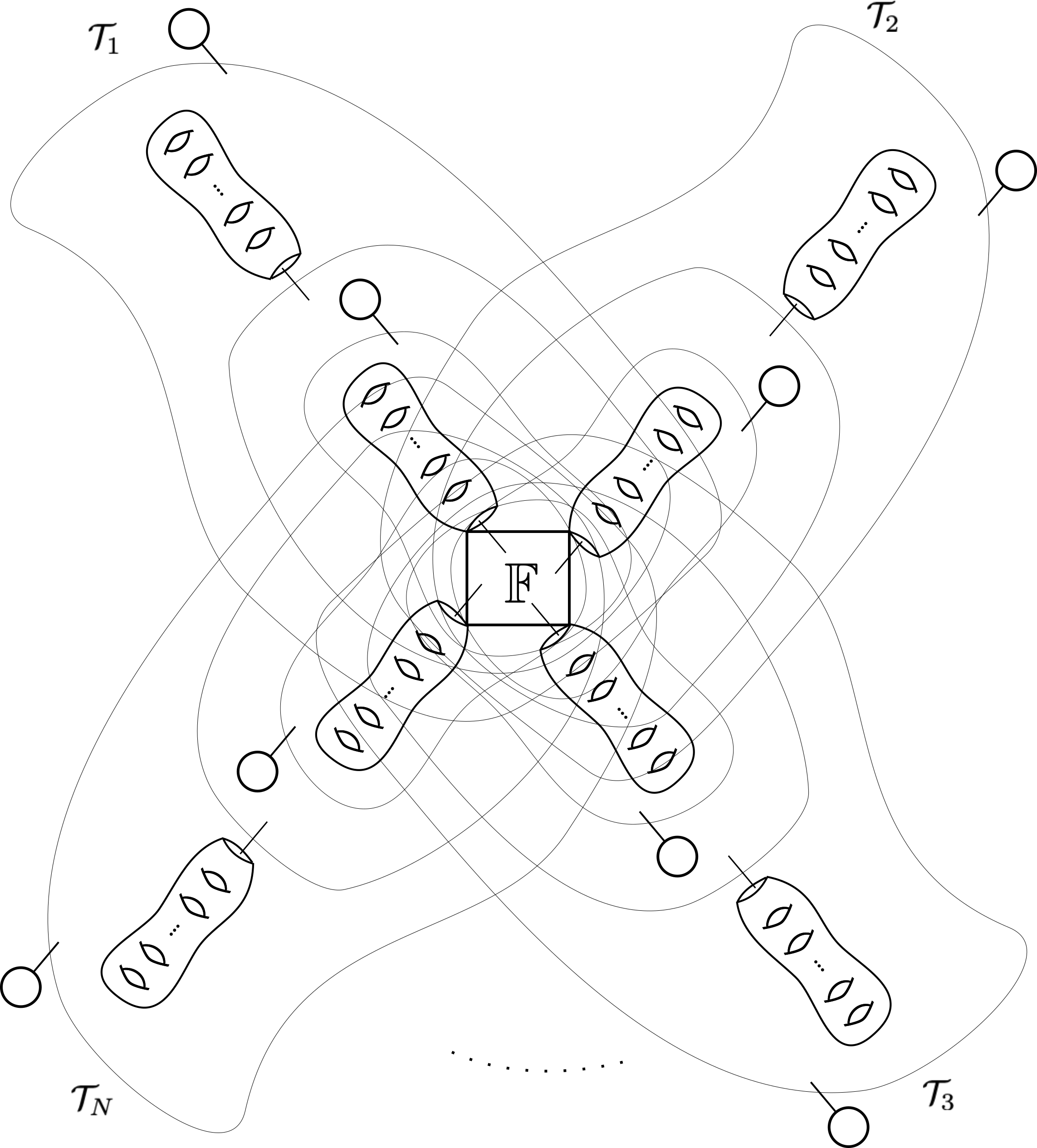}
\caption{The $N$-multiplet of $\mathcal{T}$}
\label{MultipletTower}
\end{figure}

When the Abelian pouch of the tower is not just $\F$, one has to 
take into account that when we attach the pegs of the Abelian pouch for the second, third or $N$-th time the edge groups are not maximal Abelian subgroups anymore, hence we attach them all together in the beginning. This is  
similar to the merging lemma \ref{MergeAbelianFloors}. 

\begin{definition}[Multiplets of Towers]\label{MultiAb}
Let $\mathcal{T}(G, \F)$ be a tower of height $m$ and $G^m>G^{m-1}>\ldots>G^1>G^0:=\F*\F_n$ the sequence of the groups of its floors. Suppose the Abelian pouch of $\mathcal{T}$ consists of $k$ Abelian floors, with 
free Abelian vertex groups $E_1\oplus\Z^{m_1}, E_2\oplus\Z^{m_2}, \ldots, E_k\oplus\Z^{m_k}$ . 
Then the $N$-multiplet of $\mathcal{T}$, $\mathcal{T}^N$, is the tower with $m + (m-k)\cdot (N-1)$ many floors constructed recursively as follows:
\begin{itemize}
\item The ground floor of $\mathcal{T}^N$ is $\F*\F_n*\ldots*\F_n$, where $\F_n$ is added $N-1$ times to $\F*\F_n$;
\item the first $k$ floors are Abelian with free Abelian vertex groups $E_i\oplus \Z^{m_i}\oplus \Z^{m_i}\oplus\ldots\oplus \Z^{m_i}$ ( adding $N-1$-times the summand $\Z^{m_i}$ to $E_i\oplus\Z^{m_i}$), for $i\leq k$. Moreover, each $E_i$ embeds in $\F$ as it embeds in the original tower;
\item the floors from $k+1$ to $m$ are identical to the floors from $k+1$ to $m$ of $\mathcal{T}$, only added on top of the $k$ floors as described in the previous point; 
\item suppose, for $i<N$, the $i$-multiplet of $\mathcal{T}$, $\mathcal{T}^i$, has been constructed; 
\item the floors $im +1$ to $(i+1)m-k$, for $1\leq i <N$ are identical to the floors $k+1$ to $m$ of $\mathcal{T}$, only added on top of $\mathcal{T}^i$ and with the following amalgamation embeddings modifications: 
\begin{enumerate}
\item Let $g$ be the element that generates the image of an amalgamation embedding in the original tower between the floors $k+1$ to $m$;
\item We replace each element in the nested analysis of $g$ with the corresponding element in the $i$-th copy of $\F_n$, the $i$-th copies of $\Z^{m_j}$, for $j\leq k$, and the $i$-th copies of the floors constructed thus far within the range $im+1$ to $(i+1)m-k$. We use these elements and the memory of how $g$ was built in order to get a new element $g'$ which will be the image of the generator of the edge group under the new amalgamation embedding.
\end{enumerate} 
\end{itemize}
\end{definition}

The group that corresponds to the $N$-multiplet of $\mathcal{T}(G,\F)$ is still a star of groups, but now the common subgroup is the Abelian pouch of $\mathcal{T}$ and each factor is isomorphic to the tower of the first $m$ floors of $\mathcal{T}^N$.

\begin{example}\label{ExDouble}
Let $\mathcal{T}(G, \F)$ be the tower of height $3$, that consists of the following floors:
\begin{itemize}
\item The ground floor $G^0$ is the free product of free groups $\F*\F_2:=\langle a_1, a_2\rangle*\langle x_1, x_2\rangle$;
\item The first floor $\mathcal{G}(G^1, G^0)$ is an Abelian floor with free Abelian vertex group $\Z\oplus \Z:=\langle z, z_1\rangle$ that is attached on $G^0$ through the amalgamation map to the maximal Abelian subgroup of $G^0$ generated by $a_1^2a_2^3$. Hence, $G^1$ admits the following presentation: 
$$\langle a_1, a_2, x_1, x_2, z, z_1 \ | \ [z,z_1]=1, \ z=a_1^2a_2^3\rangle$$ 
\item The second floor $\mathcal{G}(G^2, G^1)$ is a surface floor with surface type vertex an orientable surface $\Sigma_{1, 2}$ of genus $1$ with two boundary components, its fundamental group is $\langle e_1, e_2, b_1, b_2 \ |$ $ \ b_1b_2=[e_1, e_2] \rangle$ and it is attached through the amalgamation maps sending $b_1$ to the element of $G^1$ with canonical form (with respect to $\mathcal{G}(G^1, G^0)$) $\beta:=w_1(a_1,a_2) z_1^3w_2(a_1, a_2)$ $z_1^2w_3(a_1, a_2, x_1, x_2)$, and sending $b_2$ to a conjugate $(\beta^{-1})^\gamma$ of $\beta^{-1}$,  by an element $\gamma\in \F_2$ that does not commute with $\beta$. Furthermore, the element $w_3(a_1, a_2, x_1, x_2)$ has canonical form,  $w_3^1(a_1, a_2)w_3^2(x_1,x_2)$ with respect to $\F*\F_2$. We choose the edge whose edge group embeds to $b_1$ to be the maximal subtree of $\mathcal{G}(G^1, G^0)$. Hence, $G^2$ admits a presentation:
$$\langle G^1, e_1, e_2, b_1, b_2, t \ | \ b_1b_2=[e_1, e_2], \ b_1=\beta, \ t^{-1}b_2 t=(\beta^{-1})^\gamma \rangle $$
There is an obvious retraction $r_1:G^2\rightarrow G^1$ that sends $b_1$ to $\beta$, $b_2$ to $(\beta^{-1})^\gamma $, $e_1$ to $\beta$, $e_2$ to $\gamma$ and $t$ to $1$. Since $\beta$ does not commute with $\gamma$, $r_1(\pi_1(\Sigma_{1,2})$ is nonabelian.
\item The third floor $\mathcal{G}(G^3, G^2)$ is an Abelian floor with free Abelian vertex group $\Z\oplus \Z^2:=\langle y, z_2, z_3\rangle$ that is attached through the amalgamation map to the maximal Abelian subgroup of $G^2$ generated by the element with reduced form $tg_1ts_1g_2t^{-1}s_2g_3s_3t^{-2}$, where $s_i\in \pi_1(\Sigma_{1,2})$ and $g_j\in G^1\setminus G^0$. We further analyze $g_1, g_2$ and $g_3$, their normal forms (with respect to $\mathcal{G}(G^1, G^0)$) are  $z_1^2g_1^1(a_1,a_2)z_1^4$, $z_1^5$ and $g_3^1(a_1,a_2)z_1^7g_3^2(x_1, x_2)$ respectively.  Hence, $G^3$ admits the following presentation: 
$$\langle G^2, y, z_2, z_3 \ | \ [y, z_2]=[y,z_3]=[z_2,z_3]=1, \ y=tg_1ts_1g_2t^{-1}s_2g_3s_3t^{-2}\rangle$$
\end{itemize}
The double $\mathcal{T}^2$ of the above tower is a tower of height $5$ with the following floors:
\begin{itemize}
\item The ground floor $\tilde{G}^0$ is the free product $\F*\F_2*\F_2=\langle a_1, a_2 \rangle*\langle x_1, x_2\rangle*\langle x_1', x_2'\rangle$;
\item The first floor $\tilde{\mathcal{G}}(\tilde{G}^1, \tilde{G}^0)$ is an Abelian floor with free Abelian vertex group $\Z\oplus \Z\oplus\Z:=\langle z, z_1, z_1'\rangle$ that is attached on $\tilde{G}^0$ through the amalgamation map to the maximal Abelian subgroup of $\tilde{G}^0$ generated by $a_1^2a_2^3$. Hence, $\tilde{G}^1$ admits the following presentation: 
$$\langle a_1, a_2, x_1, x_2, x_1', x_2', z, z_1, z_1' \ | \ [z,z_1]=[z,z_1']=[z_1, z_1']=1, \ z=a_1^2a_2^3\rangle$$ 
\item The second floor $(\tilde{\mathcal{G}}(\tilde{G}^2, \tilde{G}^1), \tilde{r}_1)$ is a surface floor identical to the second floor of $\mathcal{T}$ only added on top of $\tilde{G}^1$. Hence, $\tilde{G}^2$ admits a presentation:
$$\langle \tilde{G}^1, e_1, e_2, b_1, b_2, t \ | \ b_1b_2=[e_1, e_2], \ b_1=\beta, \ t^{-1}b_2 t=(\beta^{-1})^\gamma \rangle $$
We define $\tilde{r}_1$ to agree with $r_1$ on $\pi_1(\Sigma_{1,2})$ and $t$ and be the identity on $\tilde{G}^1$, it is immediate that it is a retraction with $\tilde{r}_1(\pi_1(\Sigma_{1,2}))$ nonabelian.
\item The third floor $\tilde{\mathcal{G}}(\tilde{G}^3, \tilde{G}^2)$ is an Abelian floor identical to the third floor of $\mathcal{T}$ only added on top of $\tilde{G}^2$. Hence, it admits a presentation: 
$$\langle \tilde{G}^2, y, z_2, z_3 \ | \ [y, z_2]=[y,z_3]=[z_2,z_3]=1, \ y=tg_1ts_1g_2t^{-1}s_2g_3s_3t^{-2}\rangle$$
\item The fourth floor $(\tilde{\mathcal{G}}(\tilde{G}^4, \tilde{G}^3), \tilde{r}_3)$ is a surface floor with identical surface type vertex group as the second floor, i.e. $\langle e'_1, e'_2, b'_1, b'_2 \ |$ $ \ b'_1b'_2=[e'_1, e'_2] \rangle$. In order to define the amalgamation embeddings of the boundary elements $b_1', b_2'$ we need to understand the nested analysis of $\beta:=w_1(a_1,a_2) z_1^3w_2(a_1, a_2)$ $z_1^2w_3(a_1, a_2, x_1, x_2)$ and $\beta^\gamma$. The nested analysis of $\beta$ contains at level $-2$, $An_{\mathcal{T}}^{-2}(\beta)$, the element $w_3^2(x_1,x_2)$ and at level $0$, $An_{\mathcal{T}}^{0}(\beta)$, the elements $z_1^3$ and $z_1^2$. Now since $\gamma$ is an element of $\F$, the nested analysis of $\beta$ and $\beta^\gamma$ agree at the particular levels. Thus, we attach $b_1'$ to the element $\beta':=w_1(a_1,a_2) z_1'^3w_2(a_1, a_2)$ $z_1'^2w_3(a_1, a_2, x_1', x_2')$, where $w_3(a_1, a_2, x_1', x_2'):=w_3^1(a_1,a_2)w_3^2(x_1', x_2')$,  and $b_2'$ to the element $\beta'^{\gamma}$. Hence, $\tilde{G}^4$ admits the following presentation:
$$\langle \tilde{G}^3, e_1', e_2', b_1', b_2', t' \ | \ b_1'b_2'=[e_1', e_2'], \ b_1'=\beta', \ t'^{-1} b_2' t'=(\beta'^{-1})^\gamma \rangle $$
We define $\tilde{r}_3:\tilde{G}^4\rightarrow \tilde{G}^3$ to agree with $r_1$ on $\pi_1(\Sigma_{1,2})'$ (mapping elements to the corresponding $'$-elements) and $t'$ (which is sent to $1$) and be the identity on $\tilde{G}^3$, it is immediate that it is a retraction with $\tilde{r}_3(\pi_1(\Sigma_{1,2})')$ nonabelian.
\item The fifth floor $\tilde{\mathcal{G}}(\tilde{G}^5, \tilde{G}^4)$ is an Abelian floor whose free Abelian vertex group $\Z\oplus\Z^2:=\langle y',z'_2, z'_3\rangle$ is identical to the free Abelian vertex group of the third floor of $\mathcal{T}$ only added on top of $\tilde{G}^4$. Again, in order to define the amalgamation embedding we need to understand the nested analysis of the element $tg_1ts_1g_2t^{-1}s_2g_3s_3t^{-2}$. Following the analysis given 
in the fourth point of this example we attach $y'$ to the element $t'g'_1s'_1g'_2t'^{-1}s'_2g'_3s'_3t'^{-2}$, where $g'_1:=z_1'^2g_1^1(a_1,a_2)z_1'^4$, $g'_2:=z_1'^5$ and $g'_3:=g_3^1(a_1,a_2)z_1'^7g_3^2(x'_1, x'_2)$, moreover, 
$s'_j$ is the element of $\pi_1(\Sigma_{1,2})'$ that corresponds to $s_j$ of $\pi_1(\Sigma_{1,2})$, for $j\leq 3$.
Hence, $\tilde{G}^5$ admits the presentation: 
$$\langle \tilde{G}^4, y', z_2', z_3' \ | \ [y', z_2']=[y',z_3']=[z_2',z_3']=1, \ y'=t'g_1't's_1'g_2't'^{-1}s_2'g_3's_3't'^{-2}\rangle$$
\end{itemize}
 
\end{example}

\subsection{Closures of towers}

In this subsection we define the notion of a closure of a tower. 

\begin{definition}[Abelian closure]
Let $\Z\oplus\Z^m:=\langle z, z_1, z_2, \ldots, z_m \rangle$ and  $\Z\oplus A^m:=\langle z, a_1, a_2,$ $\ldots, a_m\rangle$ be free Abelian groups of rank $m+1$. Then an embedding 
$f:\Z\oplus\Z^m\rightarrow \Z\oplus A^m$ such that $f(z)=z$ is called a closure embedding.
\end{definition}

Equivalently, we can see that $f:\Z\oplus\Z^m\rightarrow \Z\oplus A^m$ is a closure embedding if and only if $f(z)=z$ and $f(\Z\oplus\Z^m)$ has finite index in $\Z\oplus A^m$. 
We call the latter group the {\em group closure of $\Z\oplus\Z^m$} with respect to $f$.

We now explain how a group closure $\Z\oplus A^m$ of rank $m+1$ defines a coset of a finite index subgroup of the cartesian product of (the group of) integers $Z^m:=\langle (1, 0, \ldots, 0)^T, (0,1,\ldots, 0)^T,$ $\ldots, (0, 0, \ldots, 1)^T\rangle$ 
such that a morphism $h:\Z\oplus \Z^m\rightarrow \F$, with $z\mapsto a$ and $z_i\mapsto a^{k_i}$, for $i\leq m$ (where $a$ is nontrivial and has no roots), extends to a morphism $g:\Z\oplus A^m\rightarrow \F$ (where we identify $\Z\oplus\Z^m$ by its image in $\Z\oplus A^m$ under $f$) 
if and only if $(k_1, k_2, \ldots, k_m)$ belongs to this coset. Note that we will always assume that $h$ sends $z$ (the element attached to the lower floor through the amalgamation embedding) to a nontrivial element without roots. This will be justified later in Section \ref{TestSequences}, since test sequences will always have this property (see Fact \ref{PowerPegs}). 

Suppose $f(z_i)=\mu_{i0}z+\mu_{i1}a_1+\ldots+\mu_{im}a_m$ and consider the linear system of equations over $\Z$ given by: 

\begin{table}[!htpb]
\centering
\begin{tabular}{r c r c c c r c r }
$\mu_{10}$ & $+$ & $ \mu_{11}x_1$ & $+$ & $\ldots$ & $+$ & $\mu_{1m}x_m$ & $=$ & $k_1$ \\
$\mu_{20}$ & $+$ & $ \mu_{21}x_1$ & $+$ & $\ldots$ & $+$ & $\mu_{2m}x_m$ & $=$ & $k_2$ \\
 & && &\ldots  & & & & \\
$\mu_{m0}$ & $+$ & $ \mu_{m1}x_1$ & $+$ & $\ldots$ & $+$ & $\mu_{mm}x_m$ & $=$ & $k_m$ \\
\end{tabular}
\end{table}
\newpage
We define $M:=(\mu_{ij})_{1\leq i\leq m \atop{0\leq j\leq m}}$ to be the $m\times m+1$ coefficient matrix of the above system. We want to solve the Diophantine system $\exists  x_1, \ldots, x_m [M.(1, x_1, \ldots, x_m)^T=(k_1, \ldots, k_m)^T]$ in $\Z$. Equivalently, 
we seek the solutions of $\exists x_1, \ldots, x_m [M_m.(x_1, \ldots, x_m)^T=(k_1 - \mu_{10}, \ldots, k_m - \mu_{m0})^T]$, where $M_m$ is the $m\times m$ matrix that consists of the last $m$ columns of $M$. 
The "column space" of $M_m$, i.e. the $\Z$-linear combinations of its columns, generate a finite index subgroup $\Delta$ of $Z^m$ (since $f$ is injective) and the Diophantine system admits a solution if and only if 
$(k_1, \ldots, k_m)^T$ takes values in the coset $(\mu_{10}, \mu_{20}, \ldots, \mu_{m0})^T+\Delta$. 

A closure of a tower is obtained by "augmenting" the free Abelian vertex group of each Abelian floor by a group closure with respect to some closure embedding. It is still a tower with the same number, type and order of floors. 

\begin{definition}[Tower closure]
Let $\mathcal{T}(G, \F)$ be a tower of height $m$ and $\{E_i\oplus\Z^{m_i}\}_{i\leq k}$ be the collection of the free Abelian vertex groups of its Abelian floors. Let, for each $i\leq k$,   
$f_i:E_i\oplus \Z^{m_i}\rightarrow E_i\oplus A^{m_i}$ be a closure embedding. Then the closure of $\mathcal{T}(G, \F)$ with respect to $\{f_i\}_{i\leq k}$, denoted $Cl(\mathcal{T}(G,\F))$, 
is a tower of height $m$, defined recursively as follows:
\begin{itemize}
\item The ground floor of $Cl(\mathcal{T})$ is identical to the ground floor of $\mathcal{T}$, i.e. $Cl(G^0):=\F*\F_n$;
\item  Suppose the tower $Cl(\mathcal{T})_i$ up to the  $i$-th floor $(\hat{\mathcal{G}}(Cl(G^{i+1}), Cl(G^i)), \hat{r}_i)$ has been constructed and has the following properties: 
\begin{enumerate}
\item the order and the type of floors of $Cl(\mathcal{T})_i$ are identical to those of $\mathcal{T}_i$; 
\item $Cl(G^{i+1})\supseteq G^{i+1}$;
\item  The pegs of Abelian floors 
of $Cl(\mathcal{T})_i$ as subgroups of $Cl(G^i)$ are identical with the pegs of the corresponding Abelian floors of $\mathcal{T}_i$ as subgroups of $G^i$;
\item the surface type vertex groups of the surface floors of $Cl(T)_i$ are identical to the surface type vertex groups of the corresponding surface floors of $\mathcal{T}_i$ and 
the boundary components are attached through the amalgamation embeddings to the same elements as the boundary components of the corresponding surface floors of $\mathcal{T}_i$; 
\end{enumerate}
\item We construct the $i+1$-th floor of $Cl(\mathcal{T})$. 
\begin{enumerate}
\item If the $i+1$-th floor of $\mathcal{T}$ is a surface floor, then the $i+1$-th floor of $Cl(\mathcal{T})$ is identical except is added on top of $Cl(G^{i+1})$ which contains $G^{i+1}$. The retraction $\hat{r}_{i+1}$ 
coincides with $r_{i+1}$ on the surface type vertex and the Bass-Serre elements and is the identity on $Cl(G^{i+1})$. Points 1) to 4) in the recursive assumption hold by construction.
\item If the $i+1$-th floor of $\mathcal{T}$ is an Abelian floor, with free Abelian vertex group $E_j\oplus\Z^{m_j}$ for some $j\leq k$, then the $i+1$-th floor of $Cl(\mathcal{T})$ is an Abelian floor with free Abelian 
vertex group, $E_j\oplus A^{m_j}$, the group closure of $E_j\oplus\Z^{m_j}$ with respect to $f_j$, and it is attached (in $Cl(G^{i+1})$) to the same cyclic subgroup that $E_j\oplus\Z^{m_j}$ was attached (in $G^{i+1}$). 
This latter group remains maximal Abelian in $Cl(G^i)$, since the only maximal Abelian groups in $G^i$ that are not maximal Abelian in $Cl(G_i)$ are free Abelian of rank $>1$. Again points 1) to 4) in the recursive assumption hold by construction.  
\end{enumerate}
\end{itemize}
\end{definition}

The group, $Cl(G)$, that corresponds to a closure of a tower $\mathcal{T}(G, \F)$ contains $G$ as a subgroup. A {\em covering set of closures} of $\mathcal{T}$ is a finite set of towers, $\mathcal{T}_1, \ldots, \mathcal{T}_k$ which are closures of $\mathcal{T}$ and for any morphism $h:G\rightarrow \F$ which is the identity on $\F$, there exists $i\leq k$ such that $h$ extends to the group that corresponds to $\mathcal{T}_i$. Equivalently, for each Abelian floor, $\Z\oplus\Z^m$, of $\mathcal{T}$ the union of cosets, $\bar \mu_i+\Delta^i$, that each corresponds to the group closure of $\Z\oplus\Z^m$ in $\mathcal{T}_i$, cover $Z^m$. 

\begin{example}
Consider the tower of height $3$ of example \ref{ExDouble} together with the closure embeddings $f_1:\langle z, z_1\rangle \rightarrow \langle z, a_1\rangle$ given by the rule $z\mapsto z$, $z_1\mapsto a_1^2$ and $f_2:\langle y, z_2, z_3\rangle\rightarrow \langle y, a_2, a_3\rangle$ given by the rule $y\mapsto y$, $z_2\mapsto ya_2^4a_3^3$, $z_3\mapsto ya_2^3a_3^2$,  for the first and third floors respectively. The closure embeddings define a tower closure with corresponding group $Cl(G)$. Any morphism $h:G\rightarrow \F$ that is the identity on $\F$ and sends pegs to nontrivial elements that have no roots extends to $Cl(G)$ if and only if it  satisfies the following properties:
\begin{itemize}
\item $h$ sends $z_1$ to $h(z)^{2\ell}$, for some $\ell\in \Z$;
\item  $h$ sends $z_2$ to $h(y)^{k_2}$  and $z_3$ to $h(y)^{k_3}$ with $\begin{bmatrix} k_{2} \\  k_{3} \end{bmatrix}\in \begin{bmatrix} 1 \\  1 \end{bmatrix}+\langle \begin{bmatrix} 4 \\  3 \end{bmatrix}, \begin{bmatrix} 2 \\  2 \end{bmatrix}\rangle$.  
\end{itemize}
Finally, it is an easy exercise in linear algebra to check that the closures defined by the cross product $\{f_1, g_1\}\times\{f_2, g_2\}$, where $g_1$ is defined by the rule $z\mapsto z$, $z_1\mapsto za_1^2$ and $g_2$ by the rule $y\mapsto y$, $z_2\mapsto a_2^4a_3^3$,  $z_3\mapsto a_2^3a_3^2$ form a covering closure for the original tower.
\end{example}

Closures of towers were introduced for generalizing Merzlyakov's result about formal solutions in nonabelian free groups to groups that admit the structure of a tower (see \cite[Theorem 1.18]{MR1972179}). 

\begin{fact}\label{MerzTowers}
Let $G$ be a group and $\mathcal{T}(G,\F)$ be a tower structure on $G$. Let $\F=\langle \bar{a}\rangle$ and $G=\langle \bar{x}, \bar{a} \ | \ T(\bar{x}, \bar{a})\rangle$ a (finite) presentation of $G$.
 Let  
$$\F\models \forall\bar{x}\Bigl(T(\bar{x}, \bar{a})=1\rightarrow\exists\bar{y}\bigl(\Sigma(\bar{x}, \bar y, \bar a)=1\land \Psi(\bar x, \bar y, \bar a)\neq 1\bigr)\Bigr)$$ 
where $\Sigma$ and $\Psi$ are finite sets of words in the free group $\langle \bar x, \bar y, \bar a\rangle$.

Then there exist:
\begin{enumerate}
\item  a covering set of closures, $Cl_1(\mathcal{T}), \ldots, Cl_k(\mathcal{T})$,  of $\mathcal{T}$, with corresponding groups, $Cl_i(G)$, generated by $\langle \bar{s}_i, \bar{x}, \bar{a}\rangle$,  for $i\leq k$;
\item for each closure $Cl_i(\mathcal{T})$ a "formal solution" $\bar w(\bar{s}_i, \bar{x},\bar{a})$ with $|\bar{w}|=|\bar{y}|$.
\end{enumerate}
such that for each $i\leq k$, the set of words $\Sigma(\bar{x}, \bar w(\bar{s}_i, \bar{x},\bar{a}), \bar{a})$ is trivial in $Cl_i(G)$ and in addition there exists a morphism $h_i:Cl_i(G)\rightarrow\F$, which is the identity on $\F$,  
that does not kill any of the words in $\Psi(\bar{x}, \bar w(\bar{s}_i, \bar{x},\bar{a}), \bar{a})$.
\end{fact}  

In the last subsection we will bring the two constructions, closures and multiplets, together and prove, under some natural assumption, that closures of  multiplets can be "symmetrised" with respect to the corresponding Abelian floors. 

\subsection{Symmetrising closures of multiplets of towers}

We will use the following lemma in order to ``symmetrize" the closure of a multiplet of towers. 

\begin{lemma}\label{SymmetricAbFloors}
Let $\Z\oplus\Z^m$, $\Z\oplus A^m$ and $\Z\oplus B^m$ be free Abelian groups of rank $m+1$. Let $f_i:\Z\oplus\Z^m\rightarrow \Z\oplus A^m$, for $i\leq k$, be closure embeddings such that the intersection of the corresponding cosets, $(\mu_{i,1}, \ldots, \mu_{i,m})^T+\Delta_i$, is nonempty and equal to $(\nu_1, \nu_2, \ldots, \nu_m)^T+\Delta$, where $\Delta:=\cap_{i\leq k}\Delta_i$. 

Then there exist closure embeddings, $g_i:\Z\oplus A^m\rightarrow \Z\oplus B^m$, for $i\leq k$, such that $g_i\circ f_i:\Z\oplus\Z^m\rightarrow \Z\oplus B^m$ is a closure embedding with corresponding coset $(\nu_1, \nu_2, \ldots, \nu_m)^T
+\Delta$. 
\end{lemma}
\begin{proof}
The intersection of finite index subgroups, is a finite index subgroup. Hence, $\Delta$ is a finite index subgroup of $Z^m$. In particular, there exists a closure embedding $f:\Z\oplus\Z^m\rightarrow \Z\oplus B^m$ with corresponding coset $(\nu_1, \nu_2, \ldots, \nu_m)^T+\Delta$. Finally, since $(\nu_1, \nu_2, \ldots, \nu_m)^T+\Delta\subseteq (\mu_{i,1}, \ldots, \mu_{i,m})^T+\Delta_i$ for every $i\leq k$, it is straightforward to find $g_i:\Z\oplus A^m\rightarrow \Z\oplus B^m$, such that $g_i\circ f_i=f$, for every $i\leq k$. 
\end{proof}

\begin{proposition}\label{SymmetricTowers}
Let $\mathcal{T}^N(G_N,\F)$ be the $N$-multiplet of the tower $\mathcal{T}(G,\F)$. Let $Cl(\mathcal{T}^N)$ be a closure of 
$\mathcal{T}^N(G_N,\F)$ and $\mathcal{G}(G^{i+1}, G^i)$ an Abelian floor of $\mathcal{T}(G,\F)$, whose peg does not belong to $\F$ (i.e. is an Abelian floor that does not belong to the Abelian pouch) and with free Abelian vertex group $\Z\oplus\Z^{m}$. 

Suppose, for $j\leq N$, $(\mu_{j,1}, \ldots, \mu_{j, m})^T+\Delta_j$ is the coset that corresponds to the Abelian closure of the $j-th$ copy of $\mathcal{G}(G^{i+1}, G^i)$ in $\mathcal{T}^N$ and the closure embedding $f_j:\Z\oplus\Z^m_j\rightarrow \Z\oplus A_j^m$ of $Cl(\mathcal{T}^N)$. Moreover, assume that the intersection of these cosets is nonempty and equal to $(\nu_1, \ldots, \nu_m)^T+\Delta$, where $\Delta:=\cap_{j\leq N}\Delta_j$.  

Then, there exists a closure of $Cl(\mathcal{T}^N)$, i.e. $Cl(Cl(\mathcal{T}^N))$, such that for all $j\leq N$, the closure embeddings $g_j:\Z\oplus A_j^m\rightarrow \Z\oplus B_j^m$  are such that the corresponding cosets of $g_j\circ f_j$ are all the same and equal to, $(\nu_1, \ldots, \nu_m)^T+\Delta$, the common intersection.
\end{proposition}
\begin{proof}
Apply Lemma \ref{SymmetricAbFloors} to the collection of copies of the Abelian floor $\Z\oplus\Z^m$ and the corresponding closure embeddings $f_j:\Z\oplus\Z^m_j\rightarrow \Z\oplus A_j^m$.  
\end{proof}

\begin{remark}
We note in passing that the closure of a closure of a tower is straightforwardly a closure of the original tower, since the composition of closure embeddings is a closure embedding itself. In addition, we may apply the above proposition simultaneously to all Abelian floors for which the hypotheses hold.      
\end{remark}


\section{Test sequences, Envelopes and Implicit Function Theorems}\label{TestSequences}

\subsection{Actions on real trees}\label{RealTrees}

A {\em real tree} is a $0$-hyperbolic geodesic metric space. In this subsection we record some background on how one can obtain a minimal nontrivial action on a real tree from a sequence of actions on $\delta$-hyperbolic spaces. We will use these actions in the next subsection in order to define test sequences. The method we are going to describe is called the {\em Bestvina-Paulin method} (see \cite{MR932860}\cite{MR958589}). 

We fix a finitely generated group $G$ and we consider the set of non-trivial equivariant pseudometrics $d:G\times G\to \R^{\geq 0}$, denoted by $\mathcal{ED}(G)$. 
We equip $\mathcal{ED}(G)$ with the compact-open topology (where $G$ is given the discrete topology). Note 
that convergence in this topology is given by: 
$$(d_i)_{i<\omega}\to d\ \ \textrm{if and only if} \ \ d_i(1,g)\to d(1,g)\ \ (\textrm{in $\R$}) \ \ \textrm{for all}\ \ g\in G$$


We also note that any based $G$-space $(X,*)$ (i.e. a metric space with a distinguished point equipped with an action of 
$G$ by isometries) gives rise to an equivariant pseudometric on $G$ as follows: $d(g,h)=d_X(g\cdot *,h\cdot *)$. The pseudo-metric is $\delta$-hyperbolic if $d_X$ is $\delta$-hyperbolic.  
We say that a sequence of $G$-spaces $(X_i,*_i)_{i<\omega}$ converges to a $G$-space $(X,*)$, if the 
corresponding pseudometrics induced by $(X_i,*_i)$ converge to the pseudometric induced by $(X,*)$ in $\mathcal{ED}(G)$. 

\begin{fact}\label{ConvSpaces}
Let $G$ be a finitely generated group. Let $(X_i, *_i)$, for $i<\omega$, be a sequence of based $\delta_i$-hyperbolic $G$-spaces. Suppose the sequence of the corresponding induced pseudometrics $(d_i)_{i<\omega}$ converge in $\mathcal{ED}(G)$ to a pseudometric $d_{\infty}$ and $\delta_i$ goes to $0$ as $i$ goes to $\infty$. 

Then, there exists a based real $G$-tree $(T,*)$ such that $T$ is spanned by the orbit of the base point $*$ under the action of $G$ and $d_{\infty}$ is the corresponding pseudometric.
\end{fact}

We will obtain actions on hyperbolic spaces through group morphisms as follows: 
a morphism $h:G\to H$ where $H$ is a hyperbolic group induces an action of $G$ on  
$X_H$ (the geometric realization of the Cayley graph of $H$) in the obvious way, thus making $X_H$, which is a hyperbolic space, a $G$-space. 

For a finitely generated group $G$, endowed with a fixed generating set $S$, and $g\in G$ we denote by $\abs{g}_{S}$ the word length of $g$ with respect to $S$, i.e. the length of the shortest word in $S$ representing $g$. When there is no ambiguity we will omit the subscript $S$ and denote the length of an element simply by $\abs{g}$.

\begin{definition}\label{LengthMorph}
Let $G:=\langle g_1, g_2, \ldots, g_m\rangle$ be a finitely generated group. Let $h:G\rightarrow H$ be a morphism to a finitely generated group $H$. Then the length of $h$, denoted $\abs{h}$, is the sum $\abs{h(g_1)}+\abs{h(g_2)}+\ldots+\abs{h(g_m)}$.
\end{definition}

\begin{remark}\label{CareBase}
Let $G,H$ be finitely generated and $h:G\rightarrow H$ a morphism. If we choose the base point of the (geometric realization) of the Cayley graph of $H$ to be the identity element, then the length of the corresponding pseudometric coincides with the length of $h$.  
\end{remark}

\begin{definition}[Convergent sequence]
An infinite sequence of morphisms $(h_n)_{n<\omega}:G\rightarrow \F$, from a finitely generated group $G$ to a free group $\F$ is called convergent if the sequence of pseudometrics $(d_n)_{n<\omega}$, obtained by the actions of $G$ on $X_\F$ via $h_n$, converges in $\mathcal{ED}(G)$. 
\end{definition}  

Actually, formally, a limit group is a group that can be obtained as a quotient of a finitely generated group $G$ by the kernel of an action on a real tree obtained by a convergent sequence $(h_n)_{n<\omega}:G\rightarrow\F$, thus the name. In \cite[Theorem 4.6]{MR1863735} Sela proved that the classes of limit groups and of finitely generated $\omega$-residually free groups coincide.    

Let $G$ be a finitely generated group. We say that a sequence of morphisms $(f_n)_{n<\omega}:G\rightarrow\F$ is {\em stably injective}, if for any $g\in G\setminus \{1\}$, there exists $n_g$, such that $h_n(g)\neq 1$ for all $n>n_g$. An application of equational Noetherianity of free groups yields the following result: a (f.g.) group admits a stably injective sequence if and only if it is a limit group. 

A finitely generated group $G$ that contains a nonabelian free group $\F$ as a (distinguished) subgroup is called a {\em restricted limit group} if there exists a stably injective sequence $(h_n)_{n<\omega}:G\rightarrow\F$, such that, for every $n<\omega$, $h_n\upharpoonright_\F=Id$. Groups that have the structure of a tower over $\F$ are examples of restricted limit groups. In addition, morphisms $h:G\rightarrow\F$ such that $h\upharpoonright_\F=Id$ are called {\em restricted morphisms}.

A convergent stably injective sequence from a nonabelian limit group $L$ induces a faithful action of $L$ on a real tree. The action can be further analyzed by, what is called, the Rips' machine (see \cite{MR1274119}, \cite{MR1465334}).

\begin{fact}
Let $G$ be a restricted limit group. Let $(h_n)_{n<\omega}:G\rightarrow\F$ be a convergent stably injective sequence of restricted morphisms and $G\curvearrowright T$ be the obtained action on a real tree $T$. Then either $G$ splits as a free product or $T$ has a decomposition as a graph of actions where each vertex action is either:
\begin{itemize}
    \item simplicial: a simplicial action on a simplicial tree.
    \item surface type: the vertex action $G_u\curvearrowright Y_u$ has kernel $N_u$ and the induced faithful action is dual to an arational measured foliation on a closed $2$-orbifold with boundary. 
    \item axial type: $Y_u$ is isometric to $\mathbb R$ and the image of $G_u$ in $Isom(Y_u)$ is a finitely generated group acting with dense orbits in $Y_u$.
\end{itemize}
\end{fact}

Since we will only tangentially touch on this, we refer the interested reader to \cite{MR2401220} for the notion of a graph of actions and for more details.      

For defining test sequences, in the next subsection, we will also be needing the notion of a {\em (relatively) short morphism}. For convenience of notation we denote by $G(\bar a)$ a group $G$ generated by the tuple $\bar a$.

\begin{definition}[Relatively short morphism]
Let $\F:=\langle \bar a\rangle$ be a free group. Let $L(\bar y, \bar x, \bar a)$ be a group that contains $G(\bar x, \bar a)$ and $\F$ as subgroups. A (restricted) morphism $h:L\rightarrow \F$ is short with respect to $G$, if for any (restricted) morphism $g:L\rightarrow \F$ that extends $h\upharpoonright_G$ we have $\abs h\leq \abs g$. 
\end{definition}

\subsection{Test sequences and their properties}

We fix a tower $\mathcal{T}(G,\F)$. A test sequence for $\mathcal{T}$ is a sequence of morphisms $(h_n)_{n<\omega}:G\rightarrow\F$ that are the identity on $\F$ and satisfy certain combinatorial conditions that depend on the structure of the tower. The values of test sequences are quite flexible as long as in the limit action one can still "see the tower structure". 

\begin{definition}[Test sequence]\label{TestSequence}
Let $\mathcal{T}(G, \F)$ be a tower and $G^m>G^{m-1}>\ldots>G_1>G_0:=\F*\F_n$ the sequence of the groups of its floors. Let $(h_n)_{n<\omega}:G\rightarrow \F$ be 
a sequence that witnesses that $G$ is a (restricted) limit group. 

We say that $(h_n)_{n<\omega}$ is a test sequence if for every $i\leq m$ and $L_i$, a restricted limit group freely indecomposable with respect to $G^i$, and every convergent, short with respect to $G^i$, sequence of morphisms $(f_n)_{n<\omega}:L_i\rightarrow\F$ that for each $n<\omega$, $f_n$ extends $h_n\upharpoonright_{G^i}$, the following hold:
\begin{itemize}
    \item The convergent sequence $(f_n)_{n<\omega}$ induces a faithful action of $L_i$ on a based real tree $(Y,*)$ and $L_i\curvearrowright Y$ decomposes as a graph of actions $(L_i\curvearrowright T, \{Y_u\}_{u\in V(T)}, \{ p_e\}_{e\in E(T)})$. A further analysis of this graph of actions for the different types of floors yields:
    \begin{enumerate}
    \item If $\mathcal{G}(G^i,G^{i-1})$ is a surface floor with surface type vertex group $\pi_1(\Sigma_{g,l})$, then $L_i$ admits a surface floor decomposition over a subgroup $B^{i-1}\supseteq G^{i-1}$ with the same surface type vertex group and the same edge groups.  
    
    \item If $\mathcal{G}(G^i,G^{i-1})$ is an Abelian floor with free Abelian vertex group $\Z\oplus\Z^m:=\langle z, z_1, z_2,$ $\ldots,z_m\rangle$, then $L_i$ admits an Abelian floor decomposition $B^{i-1}*_{C} C\oplus A$ over a subgroup $B^{i-1}\supseteq G^{i-1}$. Moreover, $z_m$ belongs to $C\oplus A\setminus C$, and  the edge group $C$ (which is maximal Abelian in $B^{i-1}$) contains $\Z\oplus\Z^{m-1}$.    
    \item For the ground floor, if $G^0=\F*\F_n$, then $L_0$ admits a retract onto $\F*\F_n$.
  
 \end{enumerate}
 
\end{itemize}
\end{definition}

A test sequence for $\mathcal{T}(G, \F)$ is constructed from ground floor to top floor using the retractions and the modular automorphism group of each floor.

\begin{definition}[Modular automorphism group]
Let $\mathcal{G}(G^{i+1}, G^i)$ be a floor in a tower $\mathcal{T}(G, \F)$. Then the modular group of $G^{i+1}$ with respect to $\mathcal{G}(G^{i+1}, G^i)$, denoted $Mod_{\mathcal{G}(G^{i+1}, G^i)}(G^{i+1})$,  is generated by the following automorphisms of $G^{i+1}$:
\begin{itemize}
    \item Dehn twists along edges of the splitting $\mathcal{G}(G^{i+1}, G^i)$ that fix $G^i$ pointwise.
    \item If $\mathcal{G}(G^{i+1}, G^i)$ is a surface floor, then Dehn twists along essential simple closed curves of corresponding surface that fix $G^i$ pointwise.
    \item If $\mathcal{G}(G^{i+1}, G^i)$ is a an Abelian floor with $\Z\oplus\Z^k$ the free Abelian vertex group, then the natural extension, that fixes $G^i$ pointwise, of an automorphism of $\Z\oplus\Z^k$ that fixes $\Z$. 
\end{itemize}
\end{definition}

When no confusion may arise we will denote the modular group of $G^{i+1}$ with respect to $\mathcal{G}(G^{i+1}, G^i)$, simply by  $Mod(G^{i+1})$.

After defining the modular automorphism group and before constructing a test sequence we add some pieces of definition. 

\begin{definition}\label{FactorsThrough}
Let $\mathcal{T}(G, \F)$ be a tower and $G^m>G^{m-1}>\ldots>G_1>G_0:=\F*\F_k$ the sequence of the groups of its floors. We say that a morphism $h:G\rightarrow \F$ factors though the tower $\mathcal{T}$ if there exist modular automorphisms $\alpha_i\in Mod(G^i)$, for $1\leq i\leq m$, and a (restricted) morphism $h_0:G_0\rightarrow\F$ such that $h$ has the form: 
$$h_0\circ r_1\circ \alpha_1\circ r_2\circ\alpha_2\circ\ldots\circ r_m\circ\alpha_m$$
\end{definition}

\begin{definition}
Let $(h_n)_{n<\omega}:G\rightarrow\F$ be a sequence of morphisms. For $a,b\in G$ we say that $a$ dominates the growth of $b$ under $(h_n)_{n<\omega}$ if  
$$ \frac{|h_n(b)|_{\F}}{|h_n(a)|_{\F}}\rightarrow 0 \ \textrm{as} \ n \  \textrm{goes to} \ \infty $$
And for $A, B$ subgroups of $G$, we say that $A$ dominates the growth of $B$ under $(h_n)_{n<\omega}$ if every nontrivial element of $A$ dominates the growth of any element of $B$ under $(h_n)_{n<\omega}$.   
\end{definition}

Finally, we give an informal constructive account of test sequences and rely on the proofs of \cite[Propositions 1.3, 1.8, 1.9 \& Theorem 1.18]{MR1972179} for the justification that they indeed satisfy Definition \ref{TestSequence}.

\begin{definition}
Let $\mathcal{T}(G,\F)$ be a tower and $G^m>G^{m-1}>\ldots>G_1>G_0:=\F*\F_k$ the sequence of the groups of its floors. We construct a test sequence, $(h_n)_{n<\omega}:G\rightarrow\F$, recursively: 
\begin{itemize}
    \item For $G_0:=\F*\F_k$, with $\F_k:=\langle x_1, \ldots, x_k\rangle$ we define  a sequence of (restricted) morphisms $(f_n)_{n<\omega}:\F*\F_k\rightarrow\F$ such that, for each $n<\omega$,  $f_n(\bar{x})$ satisfies the small cancellation property $C'(1/n)$. Moreover, for each $i,j<k$ there are $c_{i,j},c'_{i,j}\in \R^+$ such that $c_{i,j}<\frac{\abs{f_n(x_i)}_{\F}}{\abs{f_n(x_j)}_{\F}}<c'_{i,j}$, for all but finitely many $n$.
    \item Suppose a test sequence has been defined for the $i$-th group of the tower $G^i$. We define a test sequence for the $i+1$-th floor.
    \begin{enumerate}
        \item Suppose $(\mathcal{G}(G^{i+1}, G^i), r_{i+1})$ is a surface floor, with surface type vertex group $\pi_1(\Sigma_{g,b})$ and $(g_n)_{n<\omega}:G^i\rightarrow\F$ is a test sequence. Then, for each $n<\omega$, $h_n:G^{i+1}\rightarrow\F$ is defined by $g_n \circ r_{i+1}\circ\alpha_n$, where $\alpha_n$ is a modular automorphism in $Mod(G^{n+1})$. Moreover, $\alpha_n$ satisfies the combinatorial conditions $(i)-(ix)$ in pages 222-223 of \cite{MR1972179} and every element nonconjugate into a maximal boundary subgroup in $\pi_1(\Sigma_{g,b})$ dominates the growth of any element in $G^i$ under $(h_n)_{n<\omega}$.
        \item Suppose $(\mathcal{G}(G^{i+1}, G^i), r_{i+1})$ is an Abelian floor, with free Abelian vertex group $\Z\oplus\Z^p:=\langle z, z_1, \ldots, z_p\rangle$ $(g_n)_{n<\omega}:G^i\rightarrow\F$ is a test sequence. Then, for each $n<\omega$, $h_n:G^{i+1}\rightarrow\F$ is defined by $g_n \circ r_{i+1}\circ\alpha_n$, where $\alpha_n$ is a modular automorphism in $Mod(G^{n+1})$. Moreover,  $z^j$ dominates the growth $z_{j-1}$, for $1<j\leq p$, and $z_1$ dominates the growth of $G^i$ under $(h_n)_{n<\omega}$.
    \end{enumerate}
\end{itemize}
\end{definition}

In pages 219-226 of \cite{MR1972179} Sela proves the existence of test sequences and records some of their properties. In particular property (xiv) in page 225 states:

\begin{fact}\label{PowerPegs}
Let $\mathcal{T}(G,\F)$ be a tower and $(h_n)_{n<\omega}:G\rightarrow\F$ be a test sequence for $\mathcal{T}$. Let $g$ be the generator of the peg that corresponds to some Abelian floor $\mathcal{G}(G^{i+1}, G^i)$. 
Then, for all $n<\omega$, $h_n(g)$ is not a proper power in $\F$. 
\end{fact} 

In addition, values of elements in a chosen basis of a free Abelian vertex group of an Abelian floor are flexible to choose in the following sense (see \cite[Lemma 1.21]{MR1972179}). 

\begin{fact}\label{BasisFlex}
Let $\mathcal{T}(G,\F)$ be a tower. Let $\Z\oplus\Z^{m}=\langle z, z_1, \ldots, z_m\rangle$ be the free Abelian vertex group of an Abelian floor in 
$\mathcal{T}$ and $k_1<k_2$ be integers. Then, there exists a test sequence, $(h_n)_{n<\omega}:G\rightarrow\F$, for $\mathcal{T}$ 
such that, for any chosen $i\leq m$,  the $n$-th value, $h_n(z_i)$, of $z_i$ is of the form $h_n(z)^{rk_2+k_1}$, for some integer $r$. 
\end{fact}

Although the following fact is not formally stated in \cite{MR1972179} one can easily deduce it by tweaking the powers of Dehn twists or free Abelian automorphisms that appear in the modular automorphisms used to define a test sequence. It, informally, says that given a floor in a tower and a test sequence, we can always find a new test sequence with higher growth rate in the vertex group (surface type or Abelian type) of the floor.       

\begin{fact}\label{TweakGrowth}
Let $\mathcal{T}(G,\F)$ be a tower and $G^m>G^{m-1}>\ldots>G_1>G_0:=\F*\F_k$ the sequence of the groups of its floors and $(h_n)_{n<\omega}:G\rightarrow\F$ a test sequence for $\mathcal{T}$. Fix $i<m$ and let $\mathcal{G}(G^{i+1}, G^i)$ be the $i$-th floor of $\mathcal{T}$. Then, there exists a test sequence $(f_n)_{n<\omega}:G\rightarrow\F$ with the following properties:
\begin{itemize}
    \item $f_n\upharpoonright_{G^i}=h_n\upharpoonright_{G^i}$, for all $n<\omega$.
    \item Suppose $\mathcal{G}(G^{i+1}, G^i)$ is a surface floor, with surface type vertex group $\pi_1(\Sigma_{g,b})$. Then the  growth rate of every element in $\pi_1(\Sigma_{g,b})$ non-conjugate into a maximal boundary subgroup of $\pi_1(\Sigma_{g,b})$, under $(f_n)_{n<\omega}$ dominates the growth rate of any element in $\pi_1(\Sigma_{g,b})$ under $(h_n)_{n<\omega}$, i.e. 
    $ \frac{|h_n(b)|_{\F}}{|f_n(a)|_{\F}}\rightarrow 0$ as $n$ goes to $\infty$, for given $a,b$ as previously defined.
    \item Suppose $\mathcal{G}(G^{i+1}, G^i)$ is an Abelian floor, with free Abelian vertex group $\Z\oplus\Z^p$. Then the growth rate of every nontrivial element in $\Z^p$ under $(f_n)_{n<\omega}$ dominates the growth rate of any element in $\Z^p$ under $(h_n)_{n<\omega}$.  
\end{itemize}
\end{fact}

\begin{remark}\label{OrderingTowers} 
This is a rather crucial observation about test sequences. If $\mathcal{T}(G,\F)$ is a tower and $G^m>G^{m-1}>\ldots>G_1>G_0:=\F*\F_k$ the sequence of the groups of its floors, then there might be many different orders in which its (fixed set of) floors may appear. For example, the peg of an Abelian floor $\mathcal{G}(G^{i+1}, G^i)$ might belong to $G^j$, for some $j<i$. In this case, we may change the order of floors by "moving" the particular Abelian floor or part of it (a direct factor of the free Abelian vertex group) lower (see Lemma \ref{ChangeFloorOrder}). This applies, in particular, to multiplets of towers, where the different copies of the original tower can be "glued" in different order and, hence, may be given different "priority" when defining a test sequence.

All statements about test sequences of towers implicitly assume that some (compatible) order of floors and an order of a fixed basis for each free Abelian vertex group have been chosen. Since the particular order is not important, for simplicity of notation in our statements, we suppress this in the sequel.       
\end{remark}

\subsection{Elementary equivalence relations in towers}\label{TowersEquiv}

We will next prove some propositions that connect the properties of an element or equivalence class in a tower with the properties of its images under a test sequence. 
Recall that since towers are limit groups all elementary equivalence relations make sense in them as well. Before proving the propositions we record a "local" version of Fact \ref{MerzTowers} (its proof is an easy adaptation of the proof 
of \cite[Theorem 1.18]{MR1972179}) and we prove an easy lemma. 

\begin{fact}\label{TestMerzTowers}
Let $\mathcal{T}(G,\F)$ be a tower with $G:=G(\bar u, \bar x,\bar a)$ and $\Sigma(\bar x, \bar y, \bar a)=1$ be a system of equations over $\F$. Let $(h_n)_{n<\omega}:G\rightarrow \F$ be a test sequence for $\mathcal{T}$. Suppose   for every $n$, there exists $\bar c_n$ in $\F$ such that $\F\models \Sigma(h_n(\bar x), \bar c_n, \bar a)=1$. 

Then there exists a closure, $Cl(\mathcal{T})$, with $Cl(G):=Cl(G)(\bar v,\bar u, \bar x, \bar a)$, and a tuple of words $\bar w(\bar v, \bar u, \bar x, \bar a)$ such that 
\begin{itemize}
\item a subsequence of $(h_n)_{n<\omega}$ extends to a test sequence of $Cl(\mathcal{T})$.  
\item every word in $\Sigma(\bar x, \bar w(\bar v, \bar u, \bar x, \bar a), \bar a)$ is trivial in $Cl(G)$.  
\end{itemize}
\end{fact}

\begin{lemma}\label{SameFloorConj}
Let  $\mathcal{T}(G,\F)$ be a tower and $G^m>G^{m-1}>\ldots>G_1>G_0:=\F*\F_n$ the sequence of the groups of its floors. Let $g, h$ be conjugates in $G$. Suppose $i=height(g)\geq height(h)$. Then any conjugating element $\gamma$ has height at most $i+1$. As a matter of fact, $\gamma$ has height at most $i$, except when the $height(\gamma)$-floor is Abelian and either $h$ or $g$ is conjugated to the edge group.  

In addition, there is always an element of height at most $i$ that conjugates $g$ to $h$.
\end{lemma}
\begin{proof}
We assume for a contradiction that $g^\gamma=h$ for some $\gamma$ of height $j$ with $j>i+1$. We take cases according to whether the floor $\mathcal{G}(G^{j}, G^{j-1})$ is surface or Abelian.
\begin{itemize}
\item Suppose $\mathcal{G}(G^{j}, G^{j-1})$ is a surface floor. Recall that since the edge groups embed onto boundary subgroups which are maximal Abelian (and pairwise non-conjugate) in the fundamental group of the surface the action of $G^j$ that corresponds to the splitting is $1$-acylindrical near the surface type vertices (i.e. the vertices in the orbit of the surface type vertex). Now since $\gamma$ does not fix the vertex stabilized by $G^{j-1}$, say $x$, we have that the segment $[x, \gamma.x]$ must be fixed by $h$, but this is a path of length strictly greater than $1$ containing a surface type vertex, hence $h$ is trivial, a contradiction. 
\item Suppose $\mathcal{G}(G^{j}, G^{j-1})$ is an Abelian floor with free Abelian vertex group $\Z\oplus\Z^m$. The action of $G^j$ that corresponds to this splitting is $1$-acylindrical near the vertices which are not fixed by conjugates of the free Abelian group. Now, since $\gamma$ does not fix the vertex stabilized by $G^{j-1}$, say $x$, we must have that the length of the segment $[x,\gamma.x]$ is not greater than $2$. In particular, $\gamma$ admits a normal form $z_1g_1$, where $g_1$ might be trivial, up to conjugation by an element of $G^{j-1}$ (that we may omit since we can conjugate $g$ and $h$ by the inverse of the same element). In addition, $h$, since it fixes $[x,\gamma.x]$, must belong to the (image of the) edge group $\Z$ (in $G^{j-1}$). Hence, we may assume that $j-1=i$, again a contradiction. Notice that $h$ in this case commutes with any element of the free Abelian vertex group $\Z\oplus\Z^m$.    
\end{itemize}  

Finally, for the last claim of the lemma, we argue as follows. We only need to check the case where the $height(\gamma)$-floor is Abelian. Since $g^\gamma=h$, we get $z_1g_1gg_1^{-1}z_1^{-1}=h$, that yields, that  $g^{g_1}=h$. Since $g_1$ belongs to $G^i$ we get what we want. 
\end{proof}

\begin{proposition}\label{ConjDiff}
Let $\mathcal{T}(G,\F)$ be a tower and $(f_n)_{n<\omega}:G\rightarrow\F$ be a test sequence for $\mathcal{T}$. Let $g, h$ be non-commuting conjugation elements in $G$. Then $f_n(g), f_n(h)$ are non-commuting conjugation elements in $\F$ for all but finitely many $n<\omega$.
\end{proposition}
\begin{proof}
For the sake of clarity, we first give a proof for a hyperbolic tower $\mathcal{T}$. Suppose, for a contradiction, that $f_n(g), f_n(h)$ are commuting conjugation elements in $\F$ for infinitely many $n<\omega$. The refined sequence consisting of these indices is still 
a test sequence, which we still denote by $(f_n)_{n<\omega}$. Hence we have $\F\models \exists z ([f_n(g),f_n(h)^z]=1)$. In particular, by Fact \ref{TestMerzTowers}, there exists a formal solution $w$ in $G$ such that $[g,h^w]=1$ in $G$, a contradiction. 

If $\mathcal{T}$ is not hyperbolic, then the previous argument gives that $g$ and $h$ are commuting conjugation elements in a closure, $Cl(\mathcal{T})$, of $\mathcal{T}$. We note in passing that if $g,h$ are commuting conjugation elements, then any conjugates of $g$ and $h$ are commuting conjugation elements. Without loss of generality we may assume that $i=height([g]^1_1)\geq height([h]^1_1)$ in $\mathcal{T}$, where by $height([g]^1_1)$ we take $min\{height(a) \ | \ a\in [g]^1_1\}$. As we may restrict the test sequence to $\mathcal{T}_i$, we can use Fact \ref{TestMerzTowers} on $\mathcal{T}_i$ and obtain $w\in Cl(G^i)$ such that $[g,h^w]=1$. If $i=0$, then $\mathcal{T}_0$ is hyperbolic and the result follows. If $i>0$, we take cases according to whether the floor $\mathcal{G}(G^i, G^{i-1})$ is Abelian or surface. 
\begin{itemize}
\item Suppose $\mathcal{G}(G^i, G^{i-1})$ is an Abelian floor of the form $G^{i-1}*_\Z (\Z\oplus \Z^{m_i})$. If $g$ is elliptic, then by the choice of $i$ it must be conjugated into the free Abelian vertex group. 
In particular, we may assume that $g$ belongs to $\Z\oplus \Z^{m_i}$. Since $h^w$ commutes with $g$ in $Cl(G^i)$ and the closure, $\Z\oplus A^{m_i}$, of $\Z\oplus \Z^{m_i}$ is maximal Abelian, we must have that $h^w$ belongs to $\Z\oplus A^{m_i}$. Hence $h$ is elliptic as well but in the splitting $Cl(G^{i-1})*_\Z(\Z\oplus A^{m_i})$. We observe that since the edge group in both splittings, $G^{i-1}*_\Z (\Z\oplus \Z^{m_i})$ and $Cl(G^{i-1})*_\Z(\Z\oplus A^{m_i})$, remains the same, ellipticity/hyperbolicity of elements of $G^i$ does not change when we see them as elements of $Cl(G^i)$. Now, since $h\in G^i$, it is elliptic in $G^{i-1}*_\Z (\Z\oplus \Z^{m_i})$. In particular, it can be conjugated into the free Abelian group that $g$ lives in by an element of $G^i$. As a matter of fact $w$ is a product $w_1\cdot w_2$, with $w_2\in G^i$ and $w_1$ in $\Z\oplus A^{m_i}$. Hence, $h^{w_2}$ commutes with $g$, a contradiction. 

Now assume that $g$ is hyperbolic. 
Since $h^w$ commutes with $g$ and cannot fix a subtree that contains a line (the action is acylindrical), it must be hyperbolic as well.
In addition, for the same reason,  $w.A_h=A_g$ in the tree that corresponds to the splitting $Cl(G^{i-1})*_\Z(\Z\oplus A^{m_i})$. Let $x$ be the vertex stabilized by $Cl(G^{i-1})$ and $y$ the vertex stabilized by $\Z\oplus A^{m_i}$. Note that the edge between $x$ and $y$ is stabilized by $\Z$. We may assume, up to conjugation by an element of $G$, that the edge $[x,y]$ belongs to both axes. Hence, $w.[x,y]=g.[x,y]$ for some $g\in G$. In particular, $g^{-1}w$ fixes $[x,y]$ and therefore it belongs to $\Z$, consequently $w$ belongs to $G$, a contradiction. 

\item Suppose $\mathcal{G}(G^i, G^{i-1})$ is a surface floor with surface type vertex group $\pi_1(\Sigma_{g,b})$. We first assume that $g$ is elliptic. By the choice of $i$ and $1$-acylindricity near the surface type vertex, we may assume, up to conjugation, that $g$ fixes the vertex fixed by $\pi_1(\Sigma_{g,b})$ and no other vertex. The same holds in the splitting of $Cl(G^i)$. In particular, the centralizer of $g$ in $Cl(G^i)$ is a subgroup of the surface group $\pi_1(\Sigma_{g,b})$. Hence, $h^w$, also belongs to $\pi_1(\Sigma_{g,b})$ and in particular it is elliptic in the splitting of $Cl(G^i)$. But then $h$ is also elliptic and since $h$ belongs to $G^i$, we get, as in the previous case, that $h$ is elliptic in $\mathcal{G}(G^i, G^{i-1})$. Therefore, we may assume that, up to conjugation, $h$ belongs to $\pi_1(\Sigma_{g,b})$. Now, since $h^w$ also belongs to the same vertex group, we get, by $1$-acylindricity, that $w$ belongs to $\pi_1(\Sigma_{g,b})$, a contradiction. 

We next assume that $g$ is hyperbolic. 
Thus, by the standard argument as in the previous case $h$ must be hyperbolic as well and in addition $w.A_h=A_g$. Both axes must contain an edge that is in the orbit of a common edge, say $e$, under the action of $G^i$. Hence, up to conjugation by elements of $G^i$, we may assume that $e$ belongs to both axes. Therefore, $w.e=\gamma e$, for some $\gamma$ in $G^i$. Now since $Stab(e)\subseteq G^i$, we have that $\gamma^{-1}w$ belongs to $G^i$, therefore, $w$ belongs to $G^i$, a contradiction.  

\end{itemize}
\end{proof}

We prove similar results for the families of cosets and double cosets with conjugation.

\begin{proposition}\label{CosetDiff}
Let $\mathcal{T}(G,\F)$ be a tower. Suppose $(g_1,h_1)$ and $(g_2,h_2)$ are not $_cE_2$ equivalent in $G$. Then, for any test sequence, $(f_n)_{n<\omega}:G\rightarrow\F$, $(f_n(g_1), f_n(h_1))$ and $(f_n(g_2), f_n(h_2))$ are not $_cE_2$ equivalent in $\F$, for all but finitely many $n$. 
\end{proposition}
\begin{proof}
Since $(g_1,h_1)$ and $(g_2,h_2)$ are not $_cE_2$ equivalent, either $h_1, h_2$ are not commuting conjugates or for every $\gamma$ such that $[h_1^\gamma,h_2]=1$ we have that  $g_2^{-1}g_1^\gamma$ does not belong to $C_G(h_2)$. In the first case, the result follows from Proposition \ref{ConjDiff}. For the latter case, we assume, for a contradiction, that for a test sequence $(f_n)_{n<\omega}$ and for each of infinitely many indices the following holds: 
$$\F\models\exists z,w \bigl([f_n(h_1)^w,f_n(h_2)]=1 \land [f_n(h_2),z]=1\land f_n(g_2^{-1})f_n(g_1)^w=z \bigr)$$
By Fact \ref{TestMerzTowers} applied to the refined sequence, still denoted $(f_n)_{n<\omega}$, there exists a closure $Cl(\mathcal{T})(Cl(G), \F)$, and elements $z,w$ in $Cl(G)$, such that $h_1^w$ and $z$ commute with $h_2$, and $g_2^{-1}g_1^w=z$. We follow the analysis of cases of the proof of Proposition \ref{ConjDiff}. We assume that $i=height([h_2]^1_1)\geq height([h_1]^1_1)$ in $\mathcal{T}$, where by $height([h]^1_1)$ we take $min\{height(a) \ | \ a\in [g]^1_1\}$. Note that if $(g_1,h_1)$ is $_cE_2$-equivalent to $(g_2,h_2)$ in $G$, then for any $\gamma,\delta\in G$, we have that $(g_1^\gamma,h_1^\gamma)$ is $_cE_2$-equivalent to $(g_2^\delta,h_2^\delta)$.

Now, since $h_1^w$ commutes with $h_2$, we get, from Lemma \ref{SameFloorConj}, that $w$ belongs to $Cl(G^i)$, unless $h_2$ is conjugated to the peg of an Abelian floor of a higher floor that we may assume it is the $(i+1)$ floor. In the latter case, assume for a contradiction that $h_1^w$ belongs to the free Abelian vertex group of the $(i+1)$-th floor and not to the edge group of this floor. But in this case $h_1$ as well must belong to a conjugate of the free Abelian vertex group and not to the conjugate of the edge group, so it must have height (up to conjugation) $i+1$, a contradiction to our assumption. In particular, again by Lemma \ref{SameFloorConj}, $w$ has height at most $i+1$.

We deal with the case where $h_2$ is not conjugated to a peg of an Abelian floor. From the analysis of the previous proposition, we have that either $w$ belongs to $G$ and consequently $g_2^{-1}g_1^w$ belongs to $G$ and therefore $z$ belongs to $G$, or the $i$-th floor is Abelian and $w=w_1w_2$, where $w_2$ belongs to $G$, $h_2$ belongs to the free abelian vertex group and $w_1$, $h_1^{w_2}$ commute with $h_2$. In the latter case, we have that $g_2^{-1}w_1g_1^{w_2}=zw_1$. We may assume that neither $g_2$ nor $g_1^{w_2}$ commutes with $w_1$, otherwise $g_2^{-1}g_1^{w_2}$ commutes with $h_2$ and we get a contradiction. Hence, arguing by normal forms, this is only possible if $w_1$ belongs to $G$. Indeed, suppose $w_1$ does not belong to $G$, we take cases according to the heights of $g_2, w_1$ and $g_1^{w_2}$. Assume first that $w_1$ has the largest height. Let $\gamma_1\gamma_2\ldots \gamma_m$ be the normal form of $g_2^{-1}$ and $\delta_1\delta_2\ldots \delta_k$ be the normal form of $g_1^{w_2}$ with respect to the $i$-th floor decomposition (which is Abelian). Then, since $\gamma_m w_1 \delta_1$ can at worst be an element of the free Abelian vertex group that does not belong to the edge group (this is because $w_1\in \Z\oplus A^{m_i}\setminus \Z\oplus \Z^{m_i}$) we get that $\gamma_1\gamma_2\ldots \gamma_m w_1 \delta_1\delta_2\ldots \delta_k$ will have a normal form with either $\gamma_1$ in $G^{i-1}$ or $m>1$ and $\gamma_2$ in $G^{i-1}$, but the normal form of $zw_1$ has length $1$ and belongs to $Z\oplus A^{m_i}$, a contradiction. Next assume that $g_2$ or $g_1^{w_2}$ has the largest height, say $j>i$. As before let $\gamma_1\gamma_2\ldots \gamma_m$ be the normal form of $g_2^{-1}$ and $\delta_1\delta_2\ldots \delta_k$ be the normal form of $g_1^{w_2}$ with respect to the $j$-th floor decomposition. In order to complete the proof we need to show  that $\gamma_m w_1 \delta_1$ does not belong to an edge group of the decomposition. This is indeed the case, since every edge group in the various decomposition of $Cl(G)$ belongs to $G$, while a recursive argument going deeper down to level $i$ if necessary for the normal forms of $\gamma_m$ and $\delta_1$ shows that $\gamma_m w_1 \delta_1$ does not belong to $G$. Hence, $w=w_1w_2$ belongs to $G$, therefore $z$ belongs to $G$ as well, again a contradiction. 

Now suppose that $h_2$ belongs to a peg of an Abelian floor of higher level and we may assume that it is of level $i+1$. Recall that in this case $w$ has height at most $i+1$. We will next show that if $height(w)=i+1$, then $w$ must belong to the free Abelian vertex group of the $(i+1)$-floor. We consider the $(i+1)$-th Abelian floor splitting, since $w$ has height $i+1$, it will move the vertex stabilized by $Cl(G^i)$, say $x$, to a vertex different than the vertex stabilized by the free Abelian vertex group of the splitting, say $y$. Since $h_1^w=g$ we have that $w.A_{h_1}=A_g$. The axis (characteristic set) of $g$ is either the vertex $y$, or the star around $y$. Similarly, the axis of $h_1$ is either the vertex $x$, or the star around $y$. Hence, the only possibility is that both $h_1, g$ belong to the edge group of the splitting and $w$ in the free Abelian vertex group. In this case, we have $g_2^{-1}g_1^w=z$, where both $z$ and $w$ commute with $h_2$. We may assume that neither $g_2$ nor $g_1$ commutes with $w$, otherwise $g_2^{-1}g_1$ commutes with $h_2$ and we are done (since also $h_1$ commutes with $h_2$). If $w$ does not belong to $G$, we argue, using normal forms, as in the previous paragraph to obtain a contradiction. Hence, $w$ must belong to $G$ and we are done. We are finally left with the case where $w$ has height at most $i$, but this case also reduces to the previous paragraph.

\end{proof}

One has to work slightly harder to get the following.

\begin{proposition}\label{DCosetDiff}
Let $\mathcal{T}(G,\F)$ be a tower. Suppose $(z_1,g_1,h_1)$ and $(z_2,g_2,h_2)$ are not $_cE_3$ equivalent in $G$. Then, for any test sequence, $(f_n)_{n<\omega}:G\rightarrow\F$, $(f_n(z_1),f_n(g_1), f_n(h_1))$ and $(f_n(z_2),f_n(g_2), f_n(h_2))$ are not $_cE_3$ equivalent in $\F$, for all but finitely many $n$. 
\end{proposition}
\begin{proof}
Since $(z_1,g_1,h_1)$ and $(z_2,g_2,h_2)$ are not $_cE_3$ equivalent, either $z_1, z_2$ and $h_1, h_2$ are not commuting conjugates (by the same element) or for every element $\gamma$ such that $z_1^\gamma$ commutes with $z_2$ and $h_1^\gamma$ commutes with $h_2$, we have that $a g_1^\gamma b\neq g_2$, for any $a$ that commutes with $z_2$ and any $b$ that commutes with $h_2$. 

For the former case, we assume for a contradiction, that there exists a closure $Cl(\mathcal{T})(Cl(G), \F)$ and $w\in Cl(G)$ such that $z_1^w$ commutes with $z_2$ and $h_1^w$ commutes with $h_2$. Without loss of generality $i=height([z_2]_1^1)$ is the largest and all elements belong to $Cl(G^i)$ (since we may apply the test sequence truncated at $G^i$). Following the proof of Proposition \ref{ConjDiff} the only case where $w$ does not belong to $G$, is when the $i$-th floor is of Abelian type and $z_2$ is elliptic (and necessarily belongs to the free abelian vertex group and not in the edge group). As a matter of fact, in this case, $w=w_1w_2$, where $w_2$ is in $G^i$ and conjugates $z_1$ to the free Abelian vertex group, while $w_1$ belongs to the free Abelian vertex group. Now, we takes cases for the couple $h_1, h_2$. If $h_2$ acts hyperbolicaly, then similarly to the proof of Proposition \ref{ConjDiff} we get that $w$ belongs to $G^i$ (one has to be careful as we are not allowed to conjugate $h_2$, but this does not cause any serious difficulty as we can conjugate $h_1$). If $h_2$ is elliptic and it belongs to a conjugate of the free Abelian vertex group, then it is not hard to see, arguing by normal forms, that either $w_1$ belongs to $G^i$ or $h_2$ must belong to the free Abelian vertex group itself and therefore $h_1^{w_2}$ commutes with $h_2$ as well. Finally, suppose $h_2$ is a conjugate of a nontrivial element of $G^{i-1}$ (which does not belong to the peg of the free Abelian vertex group), say $\alpha g \alpha^{-1}$. Then, $h_1^w$ belongs to the centralizer of that element and consequently $h_1^{w_2}$, which belongs to $G^i$, must belong to  $w_1^{-1}\alpha C(g)\alpha^{-1} w_1$ as well. But the centralizer of $g$ has trivial intersection with the free Abelian vertex group that $w_1$ belongs to, hence $w_1$ again must belong to $G^i$, and we can conclude the proof of this case.

For the latter case, we assume for a contradiction, that there exists a closure $Cl(\mathcal{T})(Cl(G), \F)$ and $w, c, d\in Cl(G)$ such that $z_1^w$ and $c$ commute with $z_2$, $h_1^w$ and $d$ commute with $h_2$, and $cg_1^wd=g_2$. 

We first assume that $w$ belongs to $Cl(G)\setminus G$. We take cases according to the various heights. We may assume that $i=height([z_2]_1^1)\geq height([g]_1^1)$, for $g$ any of the $z_1, h_1, h_2$. If $height(w)=j>i$, then we must have that the $j$-th floor is of Abelian type and $w=w_1w_2$, where $w_1$ belongs to the free Abelian vertex group, $\Z\oplus A^{m_j}\setminus \Z\oplus \Z^{m_j}$, and $w_2$ belongs to $G^{j-1}$. In addition, $z_2$ and $h_2$ belong to the edge group of the $j$-th floor and $z_1^{w_2}$, $h_1^{w_2}$ commute with that edge group. Hence, we get $w_1cw_2g_1w_2^{-1}dw_1^{-1}=g_2$. In this case, if both $w_1c$, $dw_1^{-1}$ belong to $G^j$, then we may replace $w$ with $w_2$, $c$ with $w_1c$ and $d$ with $dw_1^{-1}$ and get a contradiction. Similarly if $g_1^{w_2}$ commutes with the edge group and $w_1c\cdot dw_1^{-1}$ belongs to $G^j$. In any other case, arguing by normal forms with respect to the $j$-th floor splitting, we get that $w_1cw_2g_1w_2^{-1}dw_1^{-1}$ belongs to $Cl(G)\setminus G$, while $g_2$ belongs to $G$, a contradiction. We are left with the case where $height(w)=j\leq i$. But also in this case we must have that the $i$-th floor is of Abelian type, and $w=w_1w_2$, where $w_1$ belongs to the free Abelian vertex group, $\Z\oplus A^{m_i}\setminus \Z\oplus \Z^{m_i}$, and $w_2$ belongs to $G^{i-1}$. In addition, $z_2$ and $h_2$ belong to the edge group of the $j$-th floor and $z_1^{w_2}$, $h_1^{w_2}$ commute with that edge group. The proof now is identical to the previous case.




We next assume that $w$ belongs to $G$. 
We first observe that if $z_2$ (or symmetrically $h_2$) does not commute with a conjugate of a peg, then its centralizer is contained in $G$, hence $c$ belongs to $G$ and consequently, as $g_1, g_2, w$ belong to $G$, $d$ does as well, a contradiction. Hence, we may assume that both $z_2, h_2$ commute with a conjugate of a peg  and both $c, d$ do not belong to $G$. Say $z_2$ commutes with $\gamma p_1\gamma^{-1}$, for some $\gamma\in G$ and $h_2$ commutes with $\delta p_2 \delta^{-1}$, for some $\delta\in G$. Without loss of generality we may assume that $p_1$ is not lower than $p_2$ in $\mathcal{T}$. Using that $c$ is in $C_{Cl(G)}(p_1)^\gamma$ and $d$ in $C_{Cl(G)}(p_2)^\delta$, we get that $\gamma^{-1}g_1^w\delta=\hat{c}\gamma^{-1}g_2\delta \hat{d}$, where $\hat{c}$ commutes with $p_1$ and $\hat{d}$ commutes with $p_2$.  We next take cases according to which element in the last equality has maximum height. We denote by $ht(g)$ the height of the element $g$ in $\mathcal{T}$. We also set, for convenience of notation,  $\gamma^{-1}g_1^w\delta:=\hat{g}_1$ and $\gamma^{-1}g_2\delta:=\hat{g}_2$.   
\begin{itemize}
    \item Suppose $i=ht(\hat{c})\geq max\{ht(\hat{d}), ht(\hat{g}_1), ht(\hat{g}_2)\}$. Then, since $\hat{c}$ commutes with an Abelian peg, $\mathcal{G}(Cl(G)^i,Cl(G)^{i-1})$ must be an Abelian floor and $\hat{c}$ must belong to the free Abelian vertex group and not to $Cl(G)^{i-1}$. Let $\Z\oplus \Z^{m_i}$ be the free Abelian group in $\mathcal{T}$ and $\Z\oplus A^{m_i}$ its closure in $Cl(\mathcal{T})$. We now take cases according to whether $\hat{g}_1$ commutes with $p_1$ or not. 
    \begin{itemize}
        \item Suppose $\hat{g}_1$ commutes with $p_1$. Then $\hat{c}^{-1}\hat{g}_1$ belongs to $\Z\oplus A^{m_i}\setminus\Z\oplus\Z^{m_i}$ and consequently $\hat{g}_2\hat{d}$ must belong too. In particular, $\hat{d}$ must have an element in its reduced form with respect to $\mathcal{G}(Cl(G)^i,Cl(G)^{i-1})$ that belongs to $\Z\oplus A^{m_i}\setminus\Z\oplus\Z^{m_i}$. But since $\hat{d}$ commutes with a peg, it must commute with $p_1$ and belong to $\Z\oplus A^{m_i}\setminus\Z\oplus\Z^{m_i}$. Thus, $\hat{g}_2$ must belong to $\Z\oplus A^{m_i}$ as well, and because $\hat{g}_2$ belongs to $G$, it must belong to $\Z\oplus\Z^{m_i}$. In particular, $\hat{g}_1=g\hat{g}_2$ for some $g\in\Z\oplus\Z^{m_i}$ (take $g=\hat{g}_1\hat{g}_2^{-1}$). Therefore, $g_1^w=\gamma g \gamma^{-1} g_2$,  a contradiction. 
    
        \item Suppose $\hat{g}_1$ does not commute with $p_1$. We observe that the reduced form, with respect to $\mathcal{G}(Cl(G)^i,Cl(G)^{i-1})$, of  $\hat{c}^{-1}\hat{g}_1$ begins with an element of $\Z\oplus A^{m_i}\setminus\Z\oplus\Z^{m_i}$ ( $\hat{c}^{-1}\cdot \hat{g}_1$ may only partially cancel since $\hat{g}_1$ is in $G$). In particular, the reduced form of $\hat{d}$ must contain an element of $\Z\oplus A^{m_i}\setminus\Z\oplus\Z^{m_i}$ (as this is the only way an element of $\Z\oplus A^{m_i}\setminus\Z\oplus\Z^{m_i}$ can appear in $\hat{g}_2\hat{d}$). Consequently, since $\hat{d}$ commutes with a peg, it must commute with $p_1$ and therefore it belongs to $\Z\oplus A^{m_i}\setminus\Z\oplus\Z^{m_i}$. But this is a contradiction, since either $\hat{g}_2$ must commute with $p_1$ and then $\hat{g}_2\hat{d}$ has length $1$, or $\hat{g}_2$ does not commute with $p_1$ and the reduced forms $\hat{c}^{-1}\hat{g}_1$ and $\hat{g}_2\hat{d}$ begin with a different factor. 
    \end{itemize}    
    \item Suppose $i=ht(\hat{g}_1)\geq max\{ht(\hat{d}), ht(\hat{c}), ht(\hat{g}_2)\}$. We further take sub-cases according to the type of the $i$-th floor.
    \begin{enumerate}
        \item Suppose $\mathcal{G}(Cl(G)^i,Cl(G)^{i-1})$ is a surface floor with surface type vertex group $\pi_1(\Sigma_{g,n})$. In this case both $\hat{c}, \hat{d}$ belong to $Cl(G)^{i-1}$. Moreover, we may see $\mathcal{G}(Cl(G)^i,$ $Cl(G)^{i-1})$ as a sequence of an amalgamated free product and several HNN extensions, i.e. $Cl(G)^i_0:=Cl(G)^{i-1}*_\Z \pi_1(\Sigma_{g,n})$, $Cl(G)^i_1:=Cl(G)^i_0*_\Z$, $\ldots$, $Cl(G)^{i}=Cl(G)^i_k:=Cl(G)^i_{k-1}*_\Z$. If there are no HNN extensions, then the reduced form of $\hat{c}^{-1}\hat{g}_1$ starts with an element of $Cl(G)^{i-1}\setminus G^{i-1}$ and the reduced form of $\hat{g}_2\hat{d}$ starts with an element which lies either in $G^{i-1}$ or in $\pi_1(\Sigma_{g,n})$ (there can only be partial cancelation between $\hat{g}_2$ and $\hat{d}$ because the reduced form of $\hat{g}_2$ must contain an element in $\pi_1(\Sigma_{g,n})\setminus G^{i-1}$). One further notices that an element of $G^{i-1}$ does not belong to a coset of an element of $Cl(G)^{-1}\setminus G^{i-1}$ relative to the edge group, hence the reduced forms do not give the same element, a contradiction. So, we may assume that there is at least one HNN extension and, without loss of generality, we may also assume that $\hat{g}_1$ does not belong to any subgroup of $Cl(G)^i$ in the sequence of  $Cl(G)^i_j$, for $j<k$. Then $\hat{g}_1$ has a reduced form with respect to $Cl(G)^i_{k-1}*_\Z$, i.e. $\hat{g}_1=\hat{g}^0_1t^{\epsilon_1}\hat{g}^1_1\ldots \hat{g}^{m-1}_1t^{\epsilon_m}$, of length strictly greater than $0$. Again, $\hat{c}^{-1}\hat{g}_1$ has a reduced form that starts with an element in $Cl(G)^{i-1}\setminus G^{i-1}$ and the reduced form of $\hat{g}_2\hat{d}$ starts with an element which lies either in $G^{i-1}$ or in $\pi_1(\Sigma_{g,n})$ or is the Bass-Serre element $t^{\pm 1}$ (there can only be partial cancellation between $\hat{g}_2$ and $\hat{d}$ because the reduced form of $\hat{g}_2$ must contain an element in $\pi_1(\Sigma_{g,n})\setminus G^{i-1}$ and/or the Bass-Serre element $t^{\pm 1}$).
        \item Suppose $\mathcal{G}(Cl(G)^i,Cl(G)^{i-1})$ is an Abelian floor. If $\hat{c}$ has a reduced form that contains an element in $Cl(G)^i\setminus Cl(G)^{i-1}$, then $p_1$ is the peg of this Abelian floor and $ht(\hat{c})\geq max \{ht(\hat{g}_1),ht(\hat{g}_2), ht(\hat{d})\}$ and this case has been tackled before. Hence we may assume that both $\hat{c},\hat{d}$ belong to $Cl(G)^{i-1}$ and do not commute with the peg of this floor. In this case, the reduced form of $\hat{c}^{-1}\hat{g}_1$ starts with an element in $Cl(G)^{i-1}\setminus G^{i-1}$ and the reduced form of $\hat{g}_2\hat{d}$ starts with an element either in $G^{i-1}$ or in the free Abelian vertex group of the Abelian floor $\mathcal{G}(Cl(G)^i,Cl(G)^{i-1})$, a contradiction.    
    \end{enumerate}
    \item Suppose $ht(\hat{g}_2)\geq max\{ht(\hat{c}), ht(\hat{d}), ht(\hat{g}_1)\}$. This case is symmetric to the previous case. 
\end{itemize}
In all cases, we cannot have $\hat{g_1}=\hat{w}_1\hat{g}_2\hat{w}_2$, which implies that we cannot have $g_1=w_1g_2w_2$, and the Proposition is proved. 
\end{proof}

\subsection{Infinitude of images}

In this subsection we prove some weaker results but for every basic equivalence relation. 

\begin{lemma}\label{finiteConj}
Let $\mathcal{T}(G,\F)$ be a tower and $g\in G$. Let $(h_n)_{n<\omega}:G\rightarrow \F$ be a test sequence for $\mathcal{T}$. If $g$ cannot be conjugated into $\F$, then it has infinitely many images under $(h_n)_{n<\omega}$ up to conjugacy.  
\end{lemma}
\begin{proof}
Suppose, for a contradiction, that $\{h_n(g)\ | \ n<\omega\}$ is finite. Then, after refining $(h_n)_{n<\omega}$ (that, by abusing notation, is still denoted $(h_n)_{n<\omega}$), we may assume that it has cardinality $1$. In particular, there exists $a\in \F$, such that, for every $n<\omega$, $\F\models \exists y (h_n(g)^y=a)$. Therefore, there exists a closure, $Cl(G)$, of $G$ and an element $w\in Cl(G)$ such that $g^w=a$. Since $a\in\F$ we have that $i=height(g)\geq height(a)$. If $height(g)=0$, then, by Lemma \ref{SameFloorConj}, $g$ and $a$ are conjugates in $\F*\F_n$, and consequently, by \cite[Chapter IV, Theorem 1.4]{MR1812024}, $g$ can be conjugated in $\F$. Suppose the height of $g$ is $i+1$, for some natural number $i$. We consider the splitting $\mathcal{G}(G^{i+1}, G^{i})$. We note that $a$ is elliptic in this splitting as it fixes the vertex stabilized by $G^i$. If $g$ is hyperbolic, then it is also hyperbolic in $\mathcal{G}(Cl(G)^{i+1}, Cl(G)^{i})$ and consequently $g^w$ is, a contradiction. Therefore, we may assume that $g$ is elliptic. Now either a conjugate of $g$ belongs to $G^i$ and the result follows by induction or $g$ does not fix any vertex which is a translate of the vertex stabilized by $G^i$. The latter still holds in $\mathcal{G}(Cl(G)^{i+1}, Cl(G)^{i})$, i.e. $g$ does not fix any translate of the vertex stabilized by $Cl(G)^{i})$, and we get a contradiction.     
\end{proof}

\begin{lemma}\label{CentrPouch}
Let $\mathcal{T}(G,\F)$ be a tower and $H$ its Abelian pouch. Let $a$ be a nontrivial element in $\F$. Then $C_{G}(a)\subseteq H$. 
\end{lemma}
\begin{proof}
Let $c$ be an element that commutes with $a$. We may assume that $i=height(c)>height(H)$. We consider the splitting $\mathcal{G}(G^i, G^{i-1})$. We denote by $x$ the vertex stabilized by $G^{i-1}$ in the Bass-Serre tree that corresponds to this splitting. The element $a$ is elliptic in this splitting and if the splitting corresponds to an Abelian floor then $a$ fixes only the vertex $x$ (all pegs that belong to $\F$ are in lower floors). On the other hand, if it corresponds to a surface floor, then, by $1$-acylyndricity near the surface type vertex, the sub-tree $a$ fixes can only contain one vertex from the orbit of $x$. In both cases, the element $c$ cannot be hyperbolic in $\mathcal{G}(G^i, G^{i-1})$. Hence, it is elliptic. But then it either fixes $x$, which is a contradiction to the choice of $i$, or it moves $x$ outside the sub-tree fixed by $a$, which contradicts the assumption that $c$ commutes with $a$.         
\end{proof}

\begin{proposition}\label{finGenERPouch}
Let $\mathcal{T}(G,\F)$ be a tower. Let $H$ be the Abelian pouch of $\mathcal{T}$. Suppose $g_1, g_2, \ldots, g_n$ belong to $G$ and the $n$-tuple $(g_1, g_2, \ldots, g_n)$ does not depend on the Abelian pouch under some generalized equivalence relation $E_{\{g\}}$. Let $(h_n)_{n<\omega}:G\rightarrow \F$ be a test sequence for $\mathcal{T}$. Then $(g_1, g_2, \ldots, g_n)$ has infinitely many images, up to the $E_{\{g\}}$ equivalence relation, under $(h_n)_{n<\omega}$.
\end{proposition}

The proof of the above proposition is not hard but involved notation-wise. We tackle a special case below and leave the general case for the reader.

\begin{lemma}\label{finConjCosetsPouch}
Let $\mathcal{T}(G,\F)$ be a tower. Let $H$ be the Abelian pouch of $\mathcal{T}$. Suppose $g_1, g_2$ belong to $G$ and the couple $(g_1, g_2)$ does not depend on the Abelian pouch under the equivalence relation, $_c E_2^m$ for some natural number $m$. Let $(h_n)_{n<\omega}:G\rightarrow \F$ be a test sequence for $\mathcal{T}$. Then $(g_1, g_2)$ has infinitely many images, up to the $_c E_2^m$ equivalence relation, under $(h_n)_{n<\omega}$.  
\end{lemma}
\begin{proof}
Suppose, for a contradiction, that $\{_c [(h_n(g_1), h_n(g_2))]_2^m \ | \ n<\omega\}$ is finite. Then, after refining $(h_n)_{n<\omega}$ (that, by abusing notation, is still denoted $(h_n)_{n<\omega}$), we may assume that it has cardinality $1$. In particular, there are $a,b\in \F$ such that, for every $n<\omega$,  
$$\F\models \exists y,w \bigl([h_n(g_2)^w,b]=1 \land [y,b]=1 \land a=h_n(g_1)^wy^m\bigr)$$ 
Therefore, there exists a closure, $Cl(G)$, of $G$ and elements $y,w\in Cl(G)$ such that $g_2^w$ and $y$ commute with $b$ and $a=g_1^wy^m$. Since the arguments are similar to the arguments of the previous results in this section we will be fast. 

First assume that $b$ does not commute with a peg. In this case, the centralizer of $b$ in $Cl(G)$ is contained in $\F$. Then $g_2^w$ and $y$ belong to $\F$ and it can be easily seen that also $w$ belongs to $\F$. In particular, the couple $(g_1, g_2)$ is $_c E_2^m$ equivalent to $(a,b)$ in $G$. 

We now assume that $b$ commutes with a peg. In this case, $w=w_1w_2$, where $w_2$ belongs to $G$, $w_1$ belongs to the free Abelian vertex group of the same peg and $g_2^{w_2}$ commutes with $b$. We may assume that $w_1$ belongs to $\Z\oplus A^m\setminus \Z\oplus \Z^m$, otherwise $(g_1^{w}, g_2^{w})$ belongs to $H$, and consequently $(g_1, g_2)$ depends on $H$, a contradiction. Therefore, $g_1^{w_2}=a^{w_1^{-1}}y^m$ and either $a$ also commutes with $b$ and $(g_1^{w_2}, g_2^{w_2})$ belongs to $H$ (hence $(g_1, g_2)$ depends on $H$, again a contradiction) or $g_1^{w_2}$ and $a^{w_1^{-1}}y^m$ have different normal forms.      
\end{proof}

We next prove a similar result for the cases of cosets and double cosets of powers (without conjugation). 

\begin{lemma}\label{finCosetsPouch}
Let $\mathcal{T}(G,\F)$ be a tower. Let $H$ be the Abelian pouch of $\mathcal{T}$. Suppose $g_1, g_2$ belong to $G$ and the couple $(g_1, g_2)$ does not depend on the Abelian pouch under the equivalence relation, $E_2^m$, of powers of cosets, for some natural number $m$. Let $(h_n)_{n<\omega}:G\rightarrow \F$ be a test sequence for $\mathcal{T}$. Then $(g_1, g_2)$ has infinitely many images, up to the $E_2^m$ equivalence relation, under $(h_n)_{n<\omega}$.  
\end{lemma}
\begin{proof}
Suppose, for a contradiction, that $\{[(h_n(g_1), h_n(g_2))]_2^m \ | \ n<\omega\}$ is finite. Then, after refining $(h_n)_{n<\omega}$ (that, by abusing notation, is still denoted $(h_n)_{n<\omega}$), we may assume that it has cardinality $1$. In particular, there are $a,b\in \F$ such that, for every $n<\omega$,  
$$\F\models \exists y \bigl([h_n(g_2),b]=1 \land [y,b]=1 \land a=h_n(g_1)y^m\bigr)$$ 
Therefore, there exists a closure, $Cl(G)$, of $G$ and an element $w\in Cl(G)$ such that $g_2$ and $w$ commute with $b$ and $a=g_1w^m$. Both $g_2$ and $w^m$ belong to $G$ (the latter because $w^m=g_1^{-1}a$). Hence, both $g_2$ and $w^m$ belong to $C_G(b)=C_{Cl(G)}(b)\cap G$. But $C_G(b)$, by Lemma \ref{CentrPouch}, is a subgroup of the Abelian pouch, $H$, of $\mathcal{T}$. Therefore, since $g_1=aw^{-m}$, we get that $g_1$ as well belongs to $H$, a contradiction.     
\end{proof}

\begin{lemma}\label{finDCosetsPouch}
Let $\mathcal{T}(G,\F)$ be a tower. Let $H$ be the Abelian pouch of $\mathcal{T}$. Suppose $g_1, g_2, g_3$ belong to $G$ and the triple $(g_1, g_2, g_3)$ does not depend on the Abelian pouch under the equivalence relation, $^kE_3^m$, of powers of double cosets, for some natural numbers $k, m$. Let $(h_n)_{n<\omega}:G\rightarrow \F$ be a test sequence for $\mathcal{T}$. Then $(g_1, g_2, g_3)$ has infinitely many images, up to the $^kE_3^m$ equivalence relation, under $(h_n)_{n<\omega}$.  
\end{lemma}
\begin{proof}
Suppose, for a contradiction, that $\{^k[(h_n(g_1), h_n(g_2), h_n(g_3))]_3^m \ | \ n<\omega\}$ is finite. Then, after refining $(h_n)_{n<\omega}$ (that, by abusing notation, is still denoted $(h_n)_{n<\omega}$), we may assume that it has cardinality $1$. In particular, there are $a,b,c\in \F$ such that, for every $n<\omega$,  
$$\F\models \exists y_1,y_2 \bigl([h_n(g_1),a]=1 \land [y_1,a]=1 \land [h_n(g_3),c]=1 \land [y_2,c]=1 \land b=y_1^kh_n(g_2)y_3^m\bigr)$$ 
Therefore, there exists a closure, $Cl(G)$, of $G$ and elements $w_1, w_2\in Cl(G)$ such that $g_1$ and $w_1$ commute with $a$, moreover $g_3, w_2$ commute with $c$ and finally $b=w_1^kg_2w_2^m$. By the same argument as in Lemma \ref{finCosetsPouch} we get that $g_1$ and $g_3$ belong to the Abelian pouch, $H$, of $\mathcal{T}$. If $w_1^k$ belongs to $H$, then the proof is identical to the proof of Lemma \ref{finCosetsPouch}. Thus, we may assume that $w_1^k$ belongs to the closure of the free Abelian group pegged on $a$, but not to the free Abelian group itself, i.e. $w_1^k\in\Z\oplus A^r\setminus\Z\oplus\Z^r$. Without loss of generality we assume that the free Abelian floor pegged on $a$ is the first floor of the tower, i.e. $\mathcal{G}(G^1, G^0)$, and the free Abelian floor pegged on $c$ (if it exists) is the second, i.e. $\mathcal{G}(G^2, G^1)$. We remark that it may also happen that $a=c$. If the height of $g_2$ is less or equal to $2$ (or less or equal to $1$ if $c$ is not a peg of an Abelian floor), then we have that $g_2$ belongs to the Abelian pouch, and we are done. Hence $i=height(g_2)>2$ (or respectively $i=height(g_2)>1$). We consider the reduced form of the element $w_1^kg_2w_2^m$ in the splitting $\mathcal{G}(Cl(G)^i, Cl(G)^{-1})$. Since $g_2\in Cl(G)^i\setminus Cl(G)^{i-1}$, and $w_1^k\in Cl(G)^{i-1}\setminus G^{i-1}$ the reduced form of $w_1^kg_2w_2^m$ has length at least $2$. Indeed, the elements that belong to $Cl(G^{i-1})$, in the reduced form of $g_2$, belong to $G^{i-1}$ and consequently they cannot fully cancel $w_1^k$. But then $w_1^kg_2w_2^m=b$ and $b$ has reduced form of length at most $1$, a contradiction.          
\end{proof}

Similarly we can prove. 

\begin{lemma}\label{finDTranslatesPouch}
Let $\mathcal{T}(G,\F)$ be a tower. Let $H$ be the Abelian pouch of $\mathcal{T}$. Suppose $g_1, g_2, g_3$ belong to $G$ and the triple $(g_1, g_2, g_3)$ does not depend on the Abelian pouch under the equivalence relation, $^kE_4^m$, of double translates, for some integers $k, m$. Let $(h_n)_{n<\omega}:G\rightarrow \F$ be a test sequence for $\mathcal{T}$. Then $(g_1, g_2, g_3)$ has infinitely many images, up to the $^kE_4^m$ equivalence relation, under $(h_n)_{n<\omega}$.  
\end{lemma}

As an easy corollary we get.

\begin{corollary}\label{CorrectNFCP}
Let $\mathcal{T}(G,\F)$ be a tower. Suppose $g_1, g_2, g_3$ belong to $G$ and $(h_n)_{n<\omega}:G\rightarrow \F$ is a test sequence for $\mathcal{T}$. If $(g_1, g_2, g_3)$ has finitely many images, up to the $^kE_4^m$ equivalence relation (for some fixed integers $k,m$), under $(h_n)_{n<\omega}$, then it has boundedly many images under the family of all morphisms that factor through $\mathcal{T}$. As a matter of fact it has at most $k\cdot m$ many images. 
\end{corollary}
\begin{proof}
By Lemma \ref{finDTranslatesPouch} we get that the triple $(g_1, g_2, g_3)$ depends on the Abelian pouch, $H$, of $\mathcal{T}$. Hence, there exist $a,b,c$ in $H$ such that $[g_1,a]=1$, $[g_3,c]=1$, $[g_1,c]=1$ and there is $\gamma$ with $[\gamma,a]=1$ and $g_2=\gamma^kb\gamma^m$. The Abelian pouch, $H$, has the structure of a tower, which is a subtower of $\mathcal{T}$. Thus, we may re-apply the argument of the previous lemmas on the triple $(a,b,c)$ and the restriction of $(h_n)_{n<\omega}$ on $H$. Therefore, we get that there exists a closure, $Cl(H)$, of $H$ and $w\in Cl(H)$ such that $a, c$ and $w$ commute with some element $\alpha\in\F$ and $b=w^k\beta w^m$ for some element $\beta\in\F$. If $\alpha$ (which we may assume to be rootless) does not generate a peg of an Abelian floor, then $w\in H$. Hence, any (restricted) morphism $h:G\rightarrow\F$, that factors through the tower $\mathcal{T}$, sends $(a,b,c)$ (and consequently $(g_1, g_2, g_3)$) to a fixed $^kE_4^m$-class, i.e. the class $^k[(\alpha, \beta, \alpha)]_4^m$. On the other hand, if $\alpha$ generates a peg, and $w$ belongs to the closure of the Abelian floor, $\Z\oplus A^r$ that is pegged on $\langle\alpha\rangle$, but not to the Abelian floor, $\Z\oplus\Z^r$, itself, then, arguing by normal forms, we must have that $w^k$ and $w^m$ belong to $\Z\oplus\Z^r$ (except when $\beta$ commutes with $\alpha$, where, in this case, we must have that $w^{k+m}$ belongs to $\Z\oplus\Z^r$). In this case, any (restricted) morphism, $h:G\rightarrow\F$, that factors through the tower $\mathcal{T}$, sends $b$ to an element that belongs to $C_{\F}(\alpha)\beta C_{\F}(\alpha)$. Therefore, it sends the triple $(a,b,c)$ to at most $k\cdot m$ many $^kE_4^m$-classes, i.e. to one of the classes in the set $\{^k[(\alpha, \alpha^i\beta\alpha^j,\alpha)]_4^m \ | \ 0\leq i<k, \ 0\leq j<m\}$.     





\end{proof}

It is not hard to prove the analogous result for the family of generalized equivalence relations. 

\begin{proposition}\label{CorrectNFCP2}
Let $\mathcal{T}(G,\F)$ be a tower. Let $H$ be the Abelian pouch of $\mathcal{T}$. Suppose $g_1, g_2, \ldots, g_n$ belong to $G$ and the $n$-tuple $(g_1, g_2, \ldots, g_n)$ does not depend on the Abelian pouch under some generalized equivalence relation $E_{\{g\}}$. Let $(h_n)_{n<\omega}:G\rightarrow \F$ be a test sequence for $\mathcal{T}$. If $(g_1, g_2, \ldots, g_n)$ has finitely many images, up to the $E_{\{g\}}$ equivalence relation, under $(h_n)_{n<\omega}$, then it has boundedly many images under the family of all morphisms that factor through $\mathcal{T}$.
\end{proposition}

The above corollary and proposition are not directly needed for the purposes of the current paper. But, in the light of the new equivalence relations, they are required for correcting the results in \cite{MR3846335}.

\subsection{Diophantine Envelopes}

We will use towers and test sequences in order to understand definable sets in nonabelian free groups. Although, any formula (in the language of groups) $\phi(\bar{x})$ is equivalent to 
a boolean combination of $\forall\exists$-formulas, modulo the first-order theory of nonabelian free groups, i.e. 

$$T_{fg}\models \phi(\bar{x})\leftrightarrow \bigwedge\limits_{i=1}^k\bigl(\psi_i^{1}(\bar{x})\lor\ldots \lor\psi_i^{m_i}(\bar{x})\bigvee \lnot \theta_i^1(\bar{x})\lor\ldots\lor\lnot \theta_i^{n_i}(\bar x)\bigr)$$
where $\psi_i^j$ and $\theta_i^l$, are $\forall\exists$ formulas for each $i\leq k$, all $j\leq m_i$ and all $l\leq n_i$,
it is hard to understand the solution sets of the basic blocks (i.e. $\forall\exists$-formulas) in nonabelian free groups. A remedy for that, 
since in many cases we do not need to know the exact cut between $\phi(\F)$ and $\lnot\phi(\F)$, is to replace a formula by a finite set of towers. Each tower $\mathcal{T}(G, \F)$ 
will be endowed with a generating set for the corresponding group $G$, which is split in three tuples, a tuple of elements identical with a basis for $\F:=\langle\bar{a}\rangle$, a tuple of elements $\bar{x}$ identified 
with the variables of the formula, and a tuple $\bar{u}$ that together with $\bar{a}$ and $\bar{x}$ generate $G$. In this fashion, $G$ admits a presentation 
$\langle \bar{u}, \bar{x}, \bar{a} \ | \ \Sigma_{G}(\bar{u}, \bar{x}, \bar{a})\rangle$ and at the same time defines a Diophantine set $\exists \bar{u} \bigl(\Sigma_G(\bar{u}, \bar{x}, \bar{a})=1\bigr)$, which is 
a subset of $\F^{|\bar{x}|}$.

The principal idea is that for a nonempty first-order formula $\phi(\bar{x})$, i.e. a formula for which $\F\models\exists \bar{x}\phi(\bar{x})$, there are  
always a tower $\mathcal{T}(G,\F)$, a distinguished tuple (denoted by the same letters $\bar{x}$) in $G$, and a test sequence $(h_n)_{n<\omega}:G\rightarrow\F$
such that $\F\models\phi(h_n(\bar{x}))$. A consequence of Sela's \cite[Theorem 1.3]{SelaIm} is that for every nonempty first-order formula,  
there exist finitely many such towers, the {\em Diophantine envelope} of the formula, that the union of 
the corresponding Diophantine sets contains the solution set of the formula. To be more precise we record the ungraded version of Sela's result (one may obtain it by applying Sela's Theorem 1.3 to the formula $\psi(\bar x, \bar y, \bar a):=\phi(\bar x, \bar a)\land \bar y=\bar a)$, i.e. by adding $\bar y$ as dummy variables and make them equal to the coefficients):

\begin{fact}[Diophantine envelope]\label{Envelope}
Let $\phi(\bar{x},\bar{a})$ be a nonempty first-order formula over $\F$. Then there exist finitely many towers, 
$\{(\mathcal{T}_i(G_i,\F))_{i\leq k}\}$, with $G_i:=\langle \bar{u}_i,\bar{x},\bar{a}\ | \ \Sigma_i(\bar{u}_i,\bar{x},\bar{a})\rangle$, that we call the Diophantine Envelope of $\phi(\bar{x},\bar{a})$, with the following properties:
\begin{itemize}
\item[(i)] for each $i\leq k$, there exists a test sequence, $(h_n)_{n<\omega}:G_i\rightarrow\F$, for $\mathcal{T}_i$  
 such that $\F\models \phi(h_n(\bar{x}),\bar{a})$.
\item[(ii)] If $\F\models \phi(\bar b, \bar a)$, then there exist $i\leq k$ and a morphism $h:G_i\rightarrow \F$, that factors through $\mathcal{T}_i$ that sends $\bar x$ to $\bar b$.   
\end{itemize}

\end{fact}

The above family of towers is called the Diophantine envelope of $\phi(\bar x, \bar a)$, because $\F\models\phi(\bar{x},\bar{a})\rightarrow \exists \bar{u}_1,\ldots,\bar{u}_k(\bigvee_{i=1}^{k}\Sigma_i(\bar{u}_i,\bar{x},\bar{a})=1)$, i.e. the corresponding union of Diophantine sets (that can be proved to be equal to a Diophantine set when working over parameters) contains $\phi(\bar x, \bar a)$.

Some comments on the towers that take part in the Diophantine envelope of a formula are in order.

\begin{remark}
We note that by the construction in Remark \ref{SimplevsExtended} we may transform  any "extended" tower, $\mathcal{T}(G,\F)$ in the Diophantine envelope of $\phi(\bar x, \bar a)$ to a simple tower, $\hat{\mathcal{T}}(\hat{G},\F)$, where $\hat{G}=G*\F_k$, for some $k<\omega$. The floor vertex group of each floor of $\hat{\mathcal{T}}$ is identical to the corresponding floor vertex group of $\mathcal{T}$ and the new retractions kill $\F_k$, hence any test sequence of $\mathcal{T}$ extends to a test sequence of $\hat{\mathcal{T}}$ and does not change the values of $\bar x$.

Finally, the other assumptions, e.g. moving pegs and thus changing the order of the floors of a tower, had been made for convenience and are not essential for the results obtained.

\end{remark}

\begin{remark}\label{NonDegenenaration}
A morphism $h$ that factors through a tower, $\mathcal{T}$, has the property that if $h$ kills a peg of an Abelian floor, then it kills the whole free Abelian vertex group. 

Indeed, if $G:=G^m>G^{m-1}>\ldots>G^1>G^0:=\F*\F_n$ is the sequence of the groups of the floors of $\mathcal{T}$, a morphism that factors through $\mathcal{T}$ has the form $$h_0\circ r_1\circ \alpha_1\circ r_2\circ\alpha_2\circ\ldots\circ r_m\circ\alpha_m$$

for some $\alpha_i\in Mod(G^i)$ and some (restricted) morphism $h_0:G^0\rightarrow\F$. 

In particular, if $(\mathcal{G}(G^{i+1}, G^i), r_{i+1})$ is an Abelian floor with a free Abelian vertex group $\Z\oplus\Z^k:=\langle z, z_1, \ldots, z_k\rangle$, and $h$ kills the peg in $G^i$, then, since $r_{i+1}$ sends $z_i$ to $z$, for $i\leq k$, and $\alpha_{i+1}$ fixes $z$, we get that $h$ must kill all $z, z_1, \ldots, z_k$. 
\end{remark}

\begin{remark}\label{MultipletEnvelope}
It is easy to construct the envelope of the conjunction $\phi(x_1)\land \phi(x_2)\land\ldots \phi(x_n)$ once we have an envelope for $\phi(x)$. We can take for each tower in the Diophantine envelope of $\phi(x)$ its $n$-multiplet. In addition, test sequences for these multiplets may give different priority to the copies of the original tower, e.g. the growth rate given to the $i$-th copy may dominate the growth rate of the $j$-th copy, under the test sequence, for an arbitrary choice of $i$ and $j$.   
\end{remark}

\begin{example}\label{ExDio}

\begin{enumerate}
\item We consider the formula $\phi(x):=\exists x_1,x_2\bigl(x=x_1^2x_2^3 \ \land \ [x_1,x_2]\neq 1\bigr)$. Its envelope is a single tower  consisting only of a  ground floor $G(x,x_1, x_2):=\langle x,x_1,x_2 \ | \ x=x_1^2x_2^3 \rangle *\F$. Indeed, any test sequence of this tower will eventually give values to $x_1, x_2$ that do not commute and, moreover, any solution of $\phi(x)$ in $\F$ extends to a morphism that factors through this tower. 

\item More abstractly, let $Sld:=Sld(\bar y, \bar x, \bar a)$ be a solid limit group, with $\bar x, \bar a$ the grading parameters (for the definition of solid limit groups see \cite[Definition 10.2]{MR1863735}). Let $Res$ be a graded resolution of a graded limit group that terminates in $Sld$. By \cite[Proposition 3.8(i)]{MR2166355}, the set $$\{ \ \bar b \in \F^{|\bar x|} \ \ |\  \textrm{there exists a strictly solid morphism}\ \ h:Sld\rightarrow\F \ \ \textrm{that extends}\ \ h(\bar x)= \bar b\}$$ is definable by an $\exists\forall$ first-order formula, say $\phi(\bar x, \bar a)$ (see \cite[Definition 1.5]{MR2166355} for the notion of a strictly solid morphism). 

We claim that the Diophantine envelope of $\phi(\bar x, \bar a)$ is a sub-family of the (finite) family of towers obtained as completions of the well-structured resolutions of the (ungraded) strict Makanin-Razborov diagram of $Sld$ (for the definition of well-structured resolutions and completions see \cite[Definition 1.11]{MR1972179} and \cite[Definition 1.12]{MR1972179} respectively). Note that by \cite[Lemma 1.13]{MR1972179} $Sld$ (or a quotient of it) embeds in any of its completed well-structured resolution and by abusing notation we will keep denoting its generating set in the towers that correspond to the completions by $\bar y, \bar x, \bar a$.  A tower $\mathcal{T}(G,\F)$, with  $G:=G(\bar u, \bar y, \bar x, \bar a)$, is in the sub-family of the Diophantine envelope of $\phi(\bar x, \bar a)$, if for at least one morphism $h:G\rightarrow\F$, its restriction to (the image of) $Sld$, is a strictly solid morphism. Indeed, since a strictly solid morphism exists for the particular solid limit group $Sld$ (otherwise the set is empty) and any morphism in $Hom_\F(Sld,\F)$ extends to a morphism from at least one completion, the chosen sub-family of towers is non-empty and defines a (union of) Diophantine sets, i.e. the projections of $Hom_\F(G,\F)$ to $\bar x$, that contains $\phi(\F, \bar a)$. To prove that there exists a test sequence for each tower in the chosen sub-family whose values when restricted to $\bar x$ realize $\phi(\bar x, \bar a)$ in $\F$ we argue as follows. By the definition of strictly solid morphisms and \cite[Theorem 11.2 \& Definition 11.4]{MR1863735}, there exists a finite disjunction of Diophantine sets that expresses that a morphism is not strictly solid. By a variant of Fact \ref{MerzTowers}, applied to all test sequences of $\mathcal{T}$ (instead of to all tuples satisfying some fixed defining relations of $G$), there exists a finite set of closures of $\mathcal{T}$ such that any test sequence of $\mathcal{T}$ that extends to satisfy one of those systems, has a subsequence that extends to one of these closures. This set of closures cannot form a covering closure since then every morphism from $G$ to $\F$ would extend to satisfy at least one system of equations and hence it would be non strictly solid. Therefore, by choosing the values of the bases of the free Abelian vertex groups in a way that no subsequence extends to any of the finitely many closures we may find a test sequence such that for any $n$ it does not satisfy any of the systems of equations and thus the values it gives to $\bar x$ belong to the definable set.       
\end{enumerate}
\end{example}

We next expand on the second example to show that if a test sequence for an arbitrary tower gives "correct" values (i.e. values that realize the first-order formula in $\F$) to a distinguished tuple of the group corresponding to the tower, then we may construct test sequences, by choosing the values of the bases of free Abelian vertex groups, so that eventually they will give as well "correct" values to the distinguished tuple. 

\begin{example}[Example 2 continued]\label{ExDio2}
Consider the definable set in Example \ref{ExDio} (2) and an arbitrary tower $\mathcal{T}(G,\F)$, where $G:=G(\bar u, \bar x, \bar a)$. Suppose for a test sequence, $(h_n)_{n<\omega}:G\rightarrow\F$, of $\mathcal{T}$, we have $\F\models\phi(h_n(\bar x),\bar a)$. By applying (a variant of) Fact \ref{MerzTowers} to all test sequences of $\mathcal{T}$ that (each of their elements) extend to  morphisms of $Sld(\bar y, \bar x, \bar a)$ we get a finite set of closures, $\mathcal{T}_1(G_1, \F), \ldots, \mathcal{T}_k(G_k,\F)$, of $\mathcal{T}$, and for each $i\leq k$ a formal solution $\bar y_i(\bar v, \bar u, \bar x, \bar a)$ with the property that the words in $\Sigma_{Sld}(y_i(\bar v, \bar u, \bar x, \bar a),\bar x,\bar a)$, where $\Sigma_{Sld}(\bar y, \bar x, \bar a)$ are the defining relations of $Sld$, are trivial in $G_i$. Therefore, any morphism $h:G\rightarrow\F$ that extends to one of the closures it also extends to a morphism from $Sld$ to $\F$. 
In addition, whenever each of the morphisms of a test sequence of $\mathcal{T}$ extends to a morphism from $Sld$ to $\F$, it, then, has a subsequence that extends to one of the (finitely many) closures. In particular, $(h_n)_{n<\omega}$ has a subsequence that extends through one of the closures. 
Moreover, since for a (graded) modular automorphism $\alpha$ of $Sld(\bar y, \bar x, \bar a)$ and a morphism $h:Sld\rightarrow\F$, $h\circ\alpha$ is strictly solid if and only if $h$ is strictly solid, we get that the extension of each $h_n$, in the subsequence, to $G_i$ extends to a strictly solid morphism for the particular values it gives to $y_i(\bar v, \bar u, \bar x, \bar a), \bar x, \bar a$. 

For each closure $\mathcal{T}_i$ we construct finitely many closures 
by applying (a variant of) Fact \ref{MerzTowers} to all test sequences of $\mathcal{T}_i$ that extend to satisfy one of the Diophantine first-order formulas expressing that a morphism is not strictly solid. The second level of closures has the property that if a morphism $h:G_i\rightarrow\F$, extends to one of them, then it is not strictly solid when restricted to $\bar y_i(\bar v,\bar u, \bar x, \bar a), \bar x, \bar a$, where $\bar y_i(\bar v,\bar u, \bar x, \bar a)$ is the formal solution obtained in the first level of closures. In addition, whenever each of the morphisms of a test sequence of $\mathcal{T}_i$ is not strictly solid (for the values it gives to $\bar y_i(\bar v,\bar u, \bar x, \bar a), \bar x, \bar a$), it, then, has a subsequence that extends to one of the closures.

These two levels of closures of $\mathcal{T}$ define, for each Abelian floor, a family of cosets as follows: for the Abelian vertex group of each floor of $\mathcal{T}$ we consider the (finite index) subgroup that is defined as the intersection of all subgroups, that each of which corresponds to some closure (we also include the second level of closures). The different cosets with respect to this finite index subgroup at each Abelian floor give us finitely many combinations. Each such combination of cosets partition test sequences of $\mathcal{T}$ into classes. If for a certain combination the test sequence factors through some closures in the first level and for each such closure it factors through it does not factor through none of the corresponding closures in the second level, then it will eventually give values to $\bar x$ that belong to $\phi(\bar x, \bar a)$. Indeed, each element of such test sequence extends to satisfy $Sld(\bar y, \bar x, \bar a)$ by the values it gives to  $\bar y_i(\bar z,\bar u, \bar x, \bar a), \bar x, \bar a$, if these values were not strictly solid then a subsequence of said test sequence would factor through a closure in the second level, but then its values would belong to the class that correspond to such closure, a contradiction. 

Finally, the fact that $(h_n)_{n<\omega}$ is such that some subsequences of it extends to a closure from the first level, but none of the elements of the subsequence extends to a closure in the second level, implies that there exists a combination of cosets with the desired property.  
\end{example}

The ideas in the above example can be generalized to any first-order formula, but, of course, a proof is much harder in the general case. Moreover, the general case requires three levels of closures. We now state the precise result.

\begin{fact}[Three levels of towers]\label{FactSela}
Let $\phi(\bar{x},\bar{a})$ be a first order formula over $\F$. Let $\mathcal{T}(G,\F)$, where $G:=G(\bar{u},\bar{x},\bar{a})$, be a tower over $\F$. 
Suppose there exists a test sequence, $(h_n)_{n<\omega}:G\rightarrow\F$ for $\mathcal{T}(G,\F)$, such that $\F\models\phi(h_n(\bar{x}),\bar{a})$. 

Then there exist: 
\begin{itemize}
 \item (Level 1) finitely many closures (at least one), $\mathcal{T}_1:=Cl_1(\mathcal{T}), \ldots, \mathcal{T}_k:=Cl_k(\mathcal{T})$, of $\mathcal{T}$;
 \item (Level 2) for each $i\leq k$, finitely many closures (possibly none), $\mathcal{R}^i_1:=Cl_1(\mathcal{T}_i),\ldots, \mathcal{R}^i_{m_i}:=Cl_{m_i}(\mathcal{T}_i)$. 
 \item (Level 3) for each $j\leq m_i$, for $i\leq k$, finitely many closures (possibly none), 
 $Cl_1(\mathcal{R}_j^i),\ldots,$ $Cl_{q_{i,j}}(\mathcal{R}_j^i)$.
\end{itemize}
   
The three levels of closures define, for each Abelian floor of $\mathcal{T}$, a finite family of cosets in a natural way. Considering combinations of these finite families of cosets, choosing one coset for each floor, we may partition test sequences according to the combination of cosets they satisfy. A test sequence $(g_n)_{n<\omega}$ in some class of the above partition satisfies eventually $\phi(\bar{x},\bar{a})$, i.e. $\F\models\phi(g_n(\bar{x}),\bar{a})$, for all $n>n_0$, if one of the following holds:
\begin{enumerate}
\item it extends to one of the closures in the first level (possibly more than one) and for each such closure that it extends to, it does not extend to any of its closures in the second level.
\item it extends to one of the closures in the first level (possibly more than one), and for some of the closures it extends to, it also extends to at least one of its closures in the second level, and for each such closure in the second level, it again extends to some closure in the third level.   
\end{enumerate}
Finally, such a test sequence exists. 
\end{fact}

The above fact is a consequence of the (relative) quantifier elimination \cite{MR2238944}, \cite{MR2249582} and the form the basic first-order definable sets have \cite[Propositions 3.7, 3.8 \& 3.9]{MR2166355}. The final claim of the existence of a test sequence in the above fact straightforwardly gives. 

\begin{corollary}\label{FixCosetTest}
Let $\phi(\bar{x},\bar{a})$ be a first order formula over $\F$. Let $\mathcal{T}(G,\F)$, where $G:=G(\bar{u},\bar{x},\bar{a})$, be a tower over $\F$. 
Suppose there exists a test sequence, $(h_n)_{n<\omega}:G\rightarrow\F$ for $\mathcal{T}(G,\F)$, such that $\F\models\phi(h_n(\bar{x}),\bar{a})$.

Then, there exists a closure of $\mathcal{T}(G,\F)$ such that for any test sequence $(g_n)_{n<\omega}:G\rightarrow\F$ that extends to a test sequence 
of the closure we have:
$$\F\models\phi(g_n(\bar{x}),\bar{a}),\ \ \ \textrm{for all but finitely many}\ \ n.$$
In particular, if $\mathcal{T}(G,\F)$ is hyperbolic any of its test sequences has the above property.  
\end{corollary}

\subsection{Implicit function theorems}

Implicit function theorems (see \cite{MR2154989}) or extended Merzlyakov theorems (see \cite{MR1972179}) form the basis for the validation procedure that a $\forall\exists$ sentence is true in a nonabelian free group, but also for the (relative) quantifier elimination. An example of such a type of theorem was given in Fact \ref{TestMerzTowers}.

%
%
%
%
Implicit function theorems can be generalized allowing at the place of a system of equations $\Sigma(\bar{x},\bar{y},\bar a)=1$ a first-order formula $\phi(\bar{x},\bar{y})$ as long as $\F\models \forall \bar{x} \exists^{<\infty} \bar{y} \phi(\bar{x}, \bar{y})$ (see \cite[Theorem 6.34]{MR4030181}).  

Although Theorem 6.34 in \cite{MR4030181} has only been proved for real elements, an analogue for imaginary sorts also holds. We will sketch the idea of the proof and how one can modify it to prove the following statement. 

\begin{theorem}\label{Implicit}
Let $\phi(x_1, \ldots, x_m, y_1,\ldots, y_k, \bar a)$ be a first-order formula over $\F$ in the multi-sorted language $\mathcal{L}^{eq}$ that corresponds to the theory of the free group. Let $(x_1, \ldots, x_m)$ be a tuple of variables, where $x_j$ is a variable of sort $S_{E_{i_j}}$, and $\bar{y}=(y_1, y_2, \ldots, y_k)$ be a tuple of variables in the basic sorts. Let $\mathcal{T}(G, \F)$ be a tower with $G:=G(\bar u, \bar z, \bar a)$, such that $\bar z=(\bar z_1, \bar z_2, \ldots, \bar z_m)$, where the length of $\bar z_j$ is equal to the length of tuples in the sort $E_{i_j}$, for $j\leq m$. 

Let $\F^{eq}\models \forall \bar{x}\exists^{<\infty}\bar{y}\phi(\bar{x},\bar{y},\bar a)$ and 
suppose there exists a test sequence $(h_n)_{n<\omega}:G\rightarrow \F$, for the tower $\mathcal{T}:=\mathcal{T}(G,\F)$, and a sequence of tuples in the basic sorts $(\bar{c}_n)_{n<\omega}$ 
such that  $$\F^{eq}\models\phi([h_n(\bar z_1)]_{E_{i_1}}, [h_n(\bar z_2)]_{E_{i_2}},\ldots,[h_n(\bar z_m)]_{E_{i_m}} ,\bar{c}_n,\bar a) \ \ \textrm{ for all}\ \ n<\omega.$$

Then there exists a closure, $Cl(\mathcal{T})(Cl(G), \F)$, of $\mathcal{T}$, with $Cl(G):=Cl(G)(\bar v, \bar u, \bar z, \bar a)$ and a tuple of elements $\bar{w}=(\bar{w}_1, \bar{w}_2, \ldots, \bar{w}_k)$, where the length of $\bar w_i$ is equal to the length of tuples in the sort $S_{E_{j_i}}$ that corresponds to the variable $y_i$, for $i\leq k$, such that:
\begin{itemize}
\item a subsequence of $(h_n)_{n<\omega}$, extends to a test sequence $(H_n)_{n<\omega}:Cl(G)\rightarrow\F$ for $Cl(\mathcal{T})$. 
\item $\F^{eq}\models \phi([H_n(\bar z_1)]_{E_{i_1}}, [H_n(\bar z_2)]_{E_{i_2}},\ldots,[H_n(\bar z_m)]_{E_{i_m}},[H_n(\bar{w}_1)]_{E_{j_1}},\ldots, [H_n(\bar{w}_k)]_{E_{j_k}})$ for all $n<\omega$.
\end{itemize}
\end{theorem}
\begin{proof}[Sketch]
The idea of the proof when $\phi(\bar x, \bar y, \bar a)$ is a first-order formula in the language $\mathcal{L}$ is as follows. 
\begin{itemize}
    \item[Step I] We apply Theorem \cite[Theorem 1.3]{SelaIm} to $\phi(\bar x, \bar y,\bar a)$, considering $\bar x, \bar a$ to be the grading parameters, in order to obtain a finite family of graded towers based over solid or rigid limit groups with respect to the fixed grading. 
    \item[Step II] We prove (\cite[Theorem 6.25]{MR4030181}) that for every graded tower in the family obtained in step I, the tuple $\bar y$ belongs to a subgroup $\Gamma$ of the solid group in the base of the tower such that for any two morphisms, $h_1, h_2:Sld\rightarrow\F$, in the same strictly solid family the restrictions $h_1\upharpoonright_\Gamma, h_2\upharpoonright_\Gamma $ are identical.    
    \item[Step III] For any test sequence $(h_n)_{n<\omega}:G\rightarrow\F$ as in the hypothesis of the statement, there exists, by a pigeonhole argument, a subsequence that extends to satisfy the defining relations of one of the solid limit groups that lies in the base of a (graded) tower (of the finitely many obtained in Step I). After possibly changing the values of the extension, but within the same striclty solid family, this sequence of morphisms gives values to $\bar x$ and $\bar y$ that satisfy $\phi(\bar x, \bar y, \bar a)$ in $\F$. Note that in the case of a rigid group there is no change in the values of the extension. 
    \item[Step IV] Since $\bar y$ belongs to $\Gamma$, its values by the extension of (the subsequence of) $(h_n)_{n<\omega}$ do not change and in particular by applying (extended) Merzlyakov's theorem to $\mathcal{T}$ and the system of equations given by the defining relations of the solid group, we obtain a tuple of formal solutions in some closure of $\mathcal{T}$ as in the conclusion of the statement (\cite[Theorem 6.32]{MR4030181}).  
\end{itemize}
In order to generalize the above proof to formulas in the language $\mathcal{L}^{eq}$ we do the following. We first replace $\phi(\bar x, \bar y, \bar a)$ by the real formula $\psi(\bar X, \bar Y, \bar a)$ corresponding to $\phi(\bar x, \bar y, \bar a)$. We, then, apply Theorem \cite[Theorem 1.3]{SelaIm} to the formula $\psi(\bar X, \bar Y, \bar a)$ considering $\bar X, \bar a$ as the grading parameters. The main point is to prove step II, i.e. that for every graded tower in the (graded) Diophantine envelope of $\psi(\bar X, \bar Y, \bar a)$, the tuple $\bar Y$ depends on a subgroup $\Gamma$ of the solid group in the base of the tower that is invariant under morphisms in the same strictly solid family. This is indeed true and claimed in the proofs of \cite[Theorem 2.1]{SelaIm} for conjugacy classes (and with a small modification it works for commuting conjugation classes), in \cite[Theorem 2.2]{SelaIm} for cosets and \cite[Theorem 2.3]{SelaIm} for double cosets (again with a small modification the proofs work for cosets with conjugation and double cosets with conjugation). The claim in these proofs works under the assumption that $\bar Y$ is unique, but what is really used is boundedness. The rest of the proof is identical to the proof in the real case. 
\end{proof}

Since a closure of a hyperbolic tower coincides with the tower itself we get the following corollary.

\begin{corollary}\label{HyperImplicit}
Suppose the hypotheses of Theorem \ref{Implicit} hold. Suppose, moreover, that the tower $\mathcal{T}:=\mathcal{T}(G,\F)$ is hyperbolic.

Then there exists a tuple of elements $\bar{w}=(\bar{w}_1, \bar{w}_2, \ldots, \bar{w}_k)$ in $G$ such that for any test sequence for $\mathcal{T}$, $(g_n)_{n<\omega}:G\rightarrow \F$, for all but finitely many $n<\omega$, the following holds : 

$$\F^{eq}\models\phi([g_n(\bar z_1)]_{E_{i_1}}, [g_n(\bar z_2)]_{E_{i_2}},\ldots,[g_n(\bar z_m)]_{E_{i_m}} ,[g_n(\bar{w}_1)]_{E_{j_1}},\ldots, [g_n(\bar{w}_k)]_{E_{j_k}},\bar a)$$ 

\end{corollary}


\section{Main Proof}\label{MainProof}
This section contains the proof of the main result, namely no infinite field is interpretable in the first-order theory of the free group. We tackle some special cases before moving to the full proof. In particular, 
we consider the Abelian case and the hyperbolic case.   

In the Abelian case we prove that when an infinite interpretable set is contained in the Abelian pouch of an envelope, it cannot be the domain of a field. This case is actually an essential part of the main proof, as it deals with the situation where the noncommutativity argument (see Proposition \ref{NonCommut}) cannot be applied. 

The hyperbolic case is when a Diophantine envelope of an interpretable set contains a hyperbolic tower and a test sequence of this tower gives infinitely many tuples in the set. This case is not essential, but added for clarity. The proof in this case uses the same argument as the proof of the general case, but it is free of certain technicalities. 

\subsection{Abelian case}
We first tackle the special case where the interpretable set is contained in the images of the Abelian pouches of the  Diophantine Envelope.

In order to conclude that such a set cannot be the domain of an infinite field, we need to know that centralizers of non-trivial elements in any model of the theory of nonabelian free groups are one-based. 

\subsubsection{Centralizers of nontrivial elements}

We recall that centralizers of nontrivial elements in nonabelian free groups are pure groups \cite[Corollary 6.28]{MR4030181}. In particular:

\begin{fact}\label{CentraBased}
Let $b$ be a nontrivial element of $\F$. Then, its centralizer, $C_{\F}(b)$, is one-based.
\end{fact}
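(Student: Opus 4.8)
The statement to prove is Fact \ref{CentraBased}: for a nontrivial $a\in\F$, the centralizer $C_{\F}(a)$ is one-based. Since $\F$ is free and $a\neq 1$, the centralizer $C_{\F}(a)$ is the maximal cyclic subgroup containing $a$, hence infinite cyclic, say $C_{\F}(a)=\langle b\rangle\cong\Z$. The plan is to show that $\langle b\rangle$, equipped with whatever structure it inherits as a subset of $\F^{\mathrm{eq}}$ (equivalently, as a definable set in the ambient stable theory $T=\mathrm{Th}(\F)$), is one-based. The key input, already cited in the excerpt, is that centralizers of nontrivial elements are \emph{pure groups} \cite{BySk}: every subset of $C_{\F}(a)^k$ definable in $\F$ (with parameters) is definable in the pure group language of multiplication alone. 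Thus the induced structure on $C_{\F}(a)\cong\Z$ is exactly that of the pure group $(\Z,+)$.

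\textbf{Key steps.} First I would record that $C_{\F}(a)$ is infinite cyclic and invoke the purity result to reduce the problem to: the pure abelian group $(\Z,+)$ is one-based. Second, I would recall the model-theoretic characterization: a stable group is one-based if and only if every definable (in $(\Z,+)^{\mathrm{eq}}$) subset of $(\Z,+)^n$ is, up to a Boolean combination, a finite union of cosets of acl$(\emptyset)$-definable subgroups — equivalently, every definable set is a Boolean combination of cosets of definable subgroups. Third, I would cite (or sketch) the classical fact that $(\Z,+)$ — and more generally any module, by a theorem of Baur--Monk--pp-elimination, or for $\Z$ directly by the structure theory of Presburger-style abelian group quantifier elimination — has precisely this property: the definable subsets of $\Z^n$ in the pure group language are exactly the Boolean combinations of cosets of subgroups of the form $\{\bar x : \sum m_i x_i \equiv r \pmod k\}$ and $\{\bar x:\sum m_i x_i=0\}$, hence finite unions of cosets of definable subgroups. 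This is exactly one-basedness (modules are one-based, a standard result — see e.g. Pillay's \emph{Geometric Stability Theory}). Therefore $(\Z,+)$ is one-based, and since one-basedness is preserved under passing to a definable set with its full induced structure, $C_{\F}(a)$ is one-based as required.

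\textbf{Main obstacle.} The genuinely substantial content — that centralizers are pure groups — is not reproved here; it is imported wholesale from \cite{BySk}, and the excerpt explicitly signals this (``we recall that centralizers of nontrivial elements are pure groups''). So the only thing to be careful about in this Fact is the passage from ``pure group $(\Z,+)$'' to ``one-based as a definable set in $\F^{\mathrm{eq}}$''. The subtlety is that one-basedness is a property of the induced structure, and the induced structure on $C_{\F}(a)$ from $\F^{\mathrm{eq}}$ could a priori be richer than from $\F$ alone; but weak elimination of imaginaries after adding Sela's sorts (Theorem \ref{Elim}) together with the purity statement handles this — any imaginary relation on tuples from $C_{\F}(a)$ pulls back to a real definable relation, which is pure-group definable. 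Once that is in place, the argument is just the citation that modules (in particular $\Z$) are one-based. I expect this to be a short proof, essentially a pointer to \cite{BySk} plus the standard module fact, with the only real care needed in confirming the induced structure is the pure one.
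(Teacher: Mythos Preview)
Your proposal is correct and matches the paper's treatment: the paper does not prove Fact~\ref{CentraBased} at all but simply imports it from \cite{BySk}, prefacing it with ``We recall that centralizers of nontrivial elements are pure groups \cite{BySk}. In particular:''. Your reconstruction --- purity reduces the induced structure to $(\Z,+)$, and modules are one-based --- is exactly the intended deduction, spelled out in more detail than the paper bothers with.
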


We will prove that the same holds for any group $G$ elementarily equivalent to a free group. Recall that in a stable group "genericity" is first-order expressible (see \cite[Lemma 5.4]{MR1827833}).

\begin{fact}
Let $G$ be a stable group and $\phi(x,\bar{y})$ a first-order formula over $\emptyset$. Then there exists a natural number $n$, such that for any $\bar{a}$ in $G$, either $n$ translates of $\phi(G,\bar{a})$ cover $G$, or no finite number of translates of $\phi(G,\bar{a})$ cover $G$, where by $\phi(G,\bar{a})$ we denote the solution set of $\phi(x,\bar{a})$ in $G$.
\end{fact}

\begin{theorem}\label{MonsterOneBased}
Let $G$ be elementarily equivalent to a nonabelian free group and $b$ a nontrivial element of $G$. Then its centralizer, $C_{G}(b)$, is one-based.
\end{theorem}
\begin{proof}
By \cite[Chapter 2, Proposition 5.8]{MR1429864} it will be enough to prove that $C_G(b)$ is superstable of finite $U$-rank and all minimal types (i.e. stationary types of $U$-rank $1$) are locally modular. By Buechler's dichotomy lemma (see \cite[Chapter 2, Proposition 3.2]{MR1429864} or \cite[Theorem 1]{MR820131}) it will be enough to prove that $C_{G}(b)$ has $U$-rank $1$. In particular, since the first-order theory of the free group does not have the finite cover property (nfcp), it is enough to prove that any infinite definable subset $\phi(G,\bar{c}):=X$ of $C_{G}(b)$ in $G$ is generic, i.e. finitely many translates, $a.X$, of $X$ cover $C_{G}(b)$. If, for a contradiction, this is not true, and $\phi(G,\bar{c})$ witnesses it, we can express it with a first-order sentence as follows:
$$\exists x,\bar y \bigl(x\neq 1 \ \land \ ``\phi(z,\bar y)\land [x,z]=1 \ \textrm{is infinite}" \ \land \ ``\phi(z,\bar{y}) \ \textrm{is not generic in} \ [x,z]=1"\bigr)$$ 
Infinitude and genericity can be expressed by nfcp (actually genericity by stability alone). Hence, the above sentence is true in a nonabelian free group. This is a contradiction, since any centralizer of a nontrivial element in a nonabelian free group is one-based, hence it has $U$-rank $1$. 
\end{proof}

\begin{corollary}\label{InvariantOneBased}
Let $\mathbb{M}$ be the monster model of the theory of the free group. Then the family of centralizers of nontrivial elements $\Sigma:=\{C_{\mathbb{M}}(b) \ | \ b\in\mathbb{M}\setminus\{1\}\}$ is a $\emptyset$-invariant family of one-based definable sets.
\end{corollary}

\subsubsection{Internality to the family of centralizers}

%

\begin{definition}\label{Dependance}
Let $G$ be a group, $E$ an equivalence relation in $G^m$ and $H$ a subgroup of $G$. We say that the equivalence class of a tuple $\bar g\in G^m$ depends on $H$, if $\bar g\sim_E\bar h$, for some tuple $\bar h\in H^m$. 

Similarly, if $(E_i)_{i\leq k}$ are equivalent relations (possibly repeating), in $G^{m_i}$, for $i\leq k$, and 
$\bar{g}:=\bar{g}_1, \ldots,\bar{g}_k$ is a tuple in $G^{m_1}\times\ldots\times G^{m_k}$, then we say that $\bar g$ depends on $H$ if $\bar{g}_i\sim_{E_i}\bar{h}_i$ for a tuple $\bar{h}:=\bar{h}_1,\ldots,\bar{h}_k$ in $H^{m_1}\times\ldots\times H^{m_k}$. 
\end{definition}

Before proving the main result of this subsection we will be needing a technical lemma.

\begin{lemma}\label{AbelianFloorDep}
Let $\mathcal{T}(G,\F)$ be a tower and $b$ an element that commutes with a nontrivial element in the Abelian pouch, $H$, of $\mathcal{T}$. Then either $b$ belongs to the Abelian pouch or it belongs to the free Abelian group of an  Abelian floor that is pegged on the Abelian pouch.
\end{lemma}
\begin{proof}
Let $G:=G^m>G^{m-1}>\ldots>G^1>G^0:=\F*\F_n$ be the sequence of the groups of the floors of $\mathcal{T}$. By the definition of the Abelian pouch it is the subgroup which is constructed by adding the first $i$ (Abelian) floors to $\F$ (i.e. disregarding $\F_n$), for some $i\leq m$. For convenience of notation, we will abuse language and we 
define by $\tilde{\mathcal{T}}(G, \F)$ the "tower" whose first floor consists of the $i$ first floors of $\mathcal{T}$, i.e. $G:=\tilde{G}^{k}>\ldots>\tilde{G}^1>G_0:=\F*\F_n$, where $k=m-i$ and $\tilde{G}_1$ is $G_i$. This is not formally a tower, but it will be convenient to use induction on the height of $b$ in $\tilde{\mathcal{T}}$ as well as the nested analysis of $b$ in $\tilde{\mathcal{T}}$ instead of the original tower.  

We will prove the lemma by induction on the height of $b$ in $\tilde{\mathcal{T}}$. For $b$ of height $1$, we 
see the normal form of $b$ with respect to the free product $H*\F_n$. If $b$ does not belong to $H$, then it does not commute with any nontrivial element of $H$ and we are done. 

Suppose we have proved the result for any $b$ of any height less than $r$. We will prove it for elements of height $r+1$. We split the proof in cases, according to whether the $r+1$ floor is surface or Abelian.  
\begin{itemize}
    \item Suppose $\tilde{\mathcal{G}}(\tilde{G}^{r+1}, \tilde{G}^r)$ is a surface floor. Since $b$ commutes with a nontrivial element in $\tilde{G}^r$ and this element has a unique fixed vertex (the vertex stabilized by $\tilde{G}^r$) with respect to the action that corresponds to the graph of groups of this floor, we must have that $b$ fixes the same vertex and thus it belongs to $G^r$. The result now follows from the induction hypothesis.
    \item Suppose $\tilde{\mathcal{G}}(\tilde{G}^{r+1}, \tilde{G}^r)$ is an Abelian floor. Suppose $b$ commutes with $a$ for some nontrivial element $a$ in $H\subset G^r$. Consider the action that corresponds to the graph of groups of this floor. If $a$ fixes only the vertex that is stabilized by $G^r$, then the result follows by the induction hypothesis. If not, $a$ must fix an edge and since it belongs to $H$, the peg of the floor can be taken to belong to $H$. In particular, since the free Abelian group that corresponds to the Abelian floor is maximal Abelian in $\tilde{G}^{r+1}$, $b$ belongs to this group.    
\end{itemize}
\end{proof}

\begin{proposition}\label{AbCase}
Let $x$ be a variable in sort $S_E$ and $\phi(x)$ be a first-order formula in $\mathcal{L}^{eq}$. We consider the formula 
$$\theta(\bar{z}):=\exists x \bigl(\phi(x)\land R_E(x, \bar{z})\bigr)$$ 
where $R_E(x,\bar z)$ is the elimination relation for $E$ and $\bar{z}$ is a tuple in the basic sorts. Let  $\psi(\bar{y})$ be the real formula corresponding to $\theta(\bar z)$
$$\F^{eq}\models \forall\bar{y}\bigl(\theta(f_{E_1}(\bar{y}_1), f_{E_2}(\bar{y}_2), \ldots, f_{E_\ell}(\bar{y}_{\ell}))\leftrightarrow\psi(\bar{y})\bigr)$$
where the tuple of variables, $\bar{y}:=\bar{y}_1,\bar{y}_2,\ldots,\bar{y}_{\ell}$, in the real sort  corresponds to the tuple  $z_1,\ldots,z_{\ell}$ of variables in the basic imaginary sorts.

Let $\mathcal{T}_1(G_1, \F), \ldots, \mathcal{T}_n(G_n,\F)$ be the towers in a Diophantine envelope of $\psi(\bar{y})$. Let, for $i\leq n$, $G_i:=\langle \bar{u}_i, \bar y, \bar a \ | \ \Sigma_i(\bar{u}_i, \bar y, \bar a)\rangle$ and $H_i\leq G_i$ be the Abelian pouch. 

Suppose, for each $i\leq n$, the tuple of elements $\bar{y}_1, \ldots, \bar y_\ell$ in $G_i^{m_{i1}}\times\ldots\times G_i^{m_i\ell}$ depends on the Abelian pouch $H_i$ of $\mathcal{T}_i$.

Then $\phi(\F^{eq})$ cannot be given definably the structure of an infinite field in $\F^{eq}$. 

\end{proposition}

\begin{proof}
We will show that $\phi(x)$ is internal to the family of centralizers of nontrivial elements, which we call $\Sigma$. Since, by Corollary \ref{InvariantOneBased}, this is a $\emptyset$-invariant family of one-based sets, we get, by \cite[Corollary 12]{MR2039343}, that $\phi(x)$ is one-based  and consequently it cannot be the domain of an infinite field. 

Since, by Lemma \ref{GeometricElimination}, any solution $b$ of $\phi(x)$ in $\mathbb{M}^{eq}$ is in the definable closure of the solution set of the elimination relation $R_E(b,\bar{z})$, say $\bar b_1,\ldots,\bar b_k$, it will be enough to prove that $\theta(\bar{z})$ is internal to $\Sigma$. In addition, we may assume that no tuple in the solution set of $\theta(\bar{z})$ contains a trivial element (in any sort). Indeed, as there are only finitely many choices for trivial elements in a fixed length tuple, we may treat each such case  separately, e.g. remove $\theta(1,z_2,\ldots, z_\ell), \theta(z_1, 1, z_3, \ldots, z_\ell), \ldots, \theta(1,1, z_3, \ldots, z_\ell)$ etc from $\theta(z_1, \ldots, z_\ell)$ and do the same for each such set.  Equivalently we may see $\theta(\bar{z})$ as a disjoint union of first-order formulas. Note that if each formula in a finite family of formulas is internal to $\Sigma$, then their disjunction is. In addition, we may choose the Diophantine envelope for each such formula identical to the Diophantine envelope for $\psi(\bar{y})$ with the exception of the distinguished tuple, i.e. we remove the tuple of variables that corresponds to an identity element. In particular, the hypothesis of dependence (for the tuples in the envelope of the corresponding real formula) still holds for each formula in the disjunction.  Now, each solution $\bar b_j:=b_{j1}\ldots b_{j\ell}$, for $j\le k$, is in the definable closure  of the corresponding real tuple $\bar{c}_1,\ldots,\bar{c}_\ell$, i.e. the tuple such that $f_{E_i}(\bar{c}_i)=b_{ji}$, for $i\leq \ell$, we only need to show that $\psi(\bar{y})$ is internal to $\Sigma$.

We fix a tower $\mathcal{T}_i(G_i,\F)=\mathcal{T}(G, \F)$, with $G:=\langle \bar{u}, \bar{y}, \bar{a} \ | \ \Sigma(\bar{u},\bar{y},\bar{a})\rangle$ and its Abelian pouch $H$. We will show that, there is a "parameter set" $B\subset \mathbb{M}$, that contains the parameters of the formula $\psi(\bar{y})$, such that for each solution $\bar{c}$ of $D(\bar{y}):=\exists \bar{u} \bigl(\Sigma(\bar{u},\bar{y},\bar{a})=1\bigr)$ in $\mathbb{M}$, we get that $\bar c$ is in the definable closure of $B$ together with solutions of centralizers in $\Sigma$ based on $B$, i.e. definable over $B$. 

The tuple $\bar{y}=\bar{y}_1,\ldots,\bar{y}_\ell$, by the hypothesis, depends on the Abelian pouch $H$ of $\mathcal{T}$. We claim that we may change the distinguished tuple, $\bar y$, with the tuple, $\bar w\in H$, it depends on. Indeed, we argue that if for a tuple, $\bar b$, we have $\F^{eq}\models\theta(\bar b)$ and this is witnessed by a morphism $h:G\rightarrow \F$ (that factors through the tower), i.e. $f_{E_1}(h(\bar y_1))=b_1, \ldots, f_{E_\ell}(h(\bar y_\ell))=b_\ell$, then $h$ gives to $\bar w$ the same values modulo the corresponding equivalence relations. We take cases according to the equivalence relation $\bar y_i$ corresponds to. For clarity we first tackle the case of the commuting conjugation relation. We then move to the elementary relations and argue that they are enough. 
\begin{itemize}
    \item Suppose $\bar y_i= y_i$ is a variable of sort $E_1$ - the commuting conjugation relation. Let $w$ be an element in the pouch, $H$, such that $[y_i^g,w]=1$ for some $g$ in $G$. By our hypothesis for $\theta(\bar z)$ we have that $h(y_i)\neq 1$. Since $w$ commutes with $y_i^g$ we have, by Lemma \ref{AbelianFloorDep}, that $y_i^g$ either belongs to $H$ or to an Abelian floor whose peg is generated by a root of $w$. If $y_i^g$ belongs to $H$, then we take $w=y_i^g$ and we are done. If not, then, by Remark \ref{NonDegenenaration}, the morphism $h$ cannot kill $w$ (otherwise it must kill its root and consequently $y_i^g$ and thus $y_i$) and in particular $E_1(h(\gamma_i), h(w))$ as we wanted.
    \item Suppose $(y_j, y_r)$ is a tuple of sort $_cE_2$ - the cosets with conjugation relation. Let $(w_1, w_2)$ be a tuple in the pouch, $H$, such that $[y_r^g,w_2]=1$ and $y_j^gC_G(y_r^g)=w_1C_G(w_2)$, for some $g$ in $G$. By our hypothesis for $\theta(\bar z)$ we have that $h(y_r)\neq 1$. Since $w_2$ commutes with $y_r^g$ we have, by Lemma \ref{AbelianFloorDep}, that either $y_r^g$ belongs to $H$ (and in this case we are done) or $y_r^g$ belongs to an Abelian floor whose peg is generated by a root of $w_2$. In the latter case, by Remark \ref{NonDegenenaration}, the morphism $h$ cannot kill $w_2$ (otherwise it must kill its root and consequently $y_r$). Therefore, $_cE_2((h(y_j),h(y_r)), (h(w_1), h(w_2)))$ as we wanted. 
    \item Suppose $(y_k, y_j, y_r)$ is a tuple of sort $_cE_3$ - the double cosets with conjugation relation. Let $(w_1, w_2, w_3)$ be a tuple in the pouch, $H$, such that $[y_k^g,w_1]=1$ and $[y_r^g,w_3]=1$ and $C_G(y_k^g)y_j^gC_G(y_r^g)=C_G(w_1)w_2C_G(w_3)$, for some $g$ in $G$. By our hypothesis for $\theta(\bar z)$ we have that $h(y_r)\neq 1$ and $h(y_k)\neq 1$. Since $w_1$ commutes with $y_k^g$ and $w_3$ commutes with $y_r^g$ we have, by Lemma \ref{AbelianFloorDep}, that either $y_k$ belongs to $H$ or $y_k$ belongs to an Abelian floor whose peg is generated by a root of $w_1$. Similarly for $y_r$ and $w_3$. In any combination of cases, by Remark \ref{NonDegenenaration}, the morphism $h$ cannot kill neither $w_1$ nor $w_3$. Therefore, $_cE_3((h(y_k),h(y_j),h(y_r)), (h(w_1), h(w_2), h(w_3)))$ as we wanted.
\end{itemize}

We now move to any basic relation. Then main point as seen in the above cases is that, for each elementary relation, with the exception of the trivial class, the relation is positively existentially definable. But any morphism respects a positive existential formula and in addition, by Remark \ref{NonDegenenaration} and Lemma \ref{AbelianFloorDep}, the corresponding class of the image of the tuple $w$ is not the trivial class. The above still holds for any refinement that is still positively existentially definable when we exclude the trivial class.      

Since $H$ admits a graph of groups decomposition, where the underlying graph is a star, the central vertex group is $\F$ and the rest of the vertex groups are free Abelian $\Z\oplus\Z^{m_l}$, for some $l<\omega$, we can give each element in the tuple $\bar{y}$ a normal form: 
$$c_1d_1c_2 \ldots d_mc_{m+1}$$
where each $c_i$, for $i\leq m+1$ is in $\F$ and each $d_i$, for $i\leq m$, is in some free Abelian group. We fix an element in the tuple $\bar y$, which we call $y$, and we consider its normal form as a word $w=w(c_1,d_1,c_2,\ldots, d_m,c_{m+1})$, where each $d_i$ commutes with some element $b_i$ in $\F$.  Hence, we have:

$$\F\models \forall \bar y\Bigl(D(\bar{y})\rightarrow \bigl(\exists d_1,\ldots,d_m (\bigwedge\limits_{i=1}^m [d_i,b_i]=1\land \bar{y}=\bar{w}(c_1,d_1,c_2,\ldots,d_m,c_{m+1})\bigr)\Bigr) $$

And since $\mathbb{M}$ is elementarily equivalent to $\F$, the same is true in $\mathbb{M}$. 

We choose $B:=\F$ and by the above formula we can easily check that each solution of $D(\bar{y})$ in $\mathbb{M}$ is in the definable closure of $\F$ together with elements in the centralizers $C_{\mathbb{M}}(b_i)$, for $i\leq m$, which are based on $\F$ (since they are definable over $\F$).  

Finally, since this holds for every tower in the envelope, the proposition is proved.
\end{proof}


\subsection{Hyperbolic case}

In this subsection we tackle the case of a definable set that its Diophantine envelope contains a hyperbolic tower that witnesses an infinitude of solutions for the definable set. 

\begin{proposition}\label{HypCase}
Let $x$ be a variable in sort $S_E$ and $\phi(x)$ be a first-order formula in $\mathcal{L}^{eq}$. Let  $\psi(\bar{y})$ be the real formula corresponding to $\phi(x)$, i.e.  
$$\F^{eq}\models \forall\bar{y}\bigl(\phi(f_{E}(\bar{y}))\leftrightarrow\psi(\bar{y})\bigr)$$
Let $\mathcal{T}_1(G_1, \F), \ldots, \mathcal{T}_n(G_n,\F)$ be the towers in the Diophantine envelope of $\psi(\bar{y})$. Suppose $\mathcal{T}(G,\F):=\mathcal{T}_i(G_i,\F)$, for some $i\leq n$, is hyperbolic and let $G:=\langle \bar{u}, \bar y, \bar a \ | \ \Sigma(\bar{u}, \bar y, \bar a)\rangle$. Suppose for some test sequence, $(h_n)_{n<\omega}:G\rightarrow \F$, of $\mathcal{T}$, the set of $E$-classes $\{ ([h_n(\bar{y})]_E)_{n<\omega}\}$, is infinite.    

Then $\phi(\F^{eq})$ cannot be given definably the structure of an Abelian group in $\F^{eq}$. 
\end{proposition}
\begin{proof}
Suppose, for the sake of contradiction, that $\phi(x)$ can be given definably an Abelian group structure and $\alpha(x,y,z)$ is the formula that defines the graph of the Abelian group operation on $\phi(\F^{eq})$. 
By Theorem \ref{Elim}, the elimination relation, $R_E$, for $E$, gives a bounded number, say $m$, of $\ell$-tuples in the basic sorts for any element in the sort $S_E$.  

We take an $N$-multiplet (with the natural order of floors), $\mathcal{T}^N(G_N, \F)$, of the hyperbolic tower $\mathcal{T}$, where $N$ is a number such that $\lceil\frac{\lfloor (N-1)/2\rfloor}{2}\rceil$ is larger than $m\cdot \ell+1$. The (hyperbolic) tower $\mathcal{T}^N$ can be seen as a star of isomorphic groups, where all factor subgroups are isomorphic to $G$ and the common subgroup is the coefficient group $\F$. Then, $G_N:=G_N(\bar U_1, \bar Y_1, \ldots, \bar U_N, \bar Y_N, \bar a)$, admits a generating set  where each $\bar Y_i$, for $i\leq N$, is a copy of the $\bar y$ tuple in the corresponding factor subgroup. 

We consider the formula $\alpha_N(x_1, x_2, \ldots, x_N, z)$ 
which defines the $N$-summation of elements in $\phi(\F^{eq})$ and the formula $\beta(x_1, x_2, \ldots, x_N, z_1, z_2, \ldots, z_\ell):=\exists z \bigl(\alpha_N(x_1,$  $x_2, \ldots, x_N, z)\land R_E(z, z_1, \ldots, z_\ell)\bigr)$, that assigns to the sum a tuple in the basic sorts. Note that
$$ \F^{eq}\models\forall \bar x \exists^{<\infty}\bar z \bigl(\beta(x_1, x_2, \ldots, x_N, z_1, z_2, \ldots, z_\ell)\bigr)$$ 


We note that, by our hypothesis, we may assume that $[h_n(\bar y)]_E$ are pairwise not equal. We will construct a test sequence, $(g_n)_{n<\omega}$, for $\mathcal{T}^N$ as follows: the restriction of $(g_n)_{n<\omega}$ to the first copy of $\mathcal{T}$ is $(h_n)_{n<\omega}$, the restriction of $(g_n)_{n<\omega}$ to the second copy of $\mathcal{T}$ in $\mathcal{T}^N$ is a refinement of $(h_n)_{n<\omega}$ whose growth dominates the growth of $(h_n)_{n<\omega}$, and likewise for the rest of the copies. In this way we make sure that each $\bar Y_i$ is given infinitely many values up to $E$-equivalence. Note that $\mathcal{T}^N$ is a hyperbolic tower. We apply Corollary \ref{HyperImplicit} to $\mathcal{T}^N$, the test sequence $(g_n)_{n<\omega}$, and the formula $\beta(x_1, x_2, \ldots, x_N, z_1, z_2, \ldots, z_\ell)$. 
Hence we obtain a formal solution, which is a tuple of elements $\bar{w}=(\bar{w}_1, \bar{w}_2, \ldots, \bar{w}_\ell)$ in $\mathcal{T}^N$, such that $\bar w_i$ corresponds to the basic sort of $z_i$, for $i\leq \ell$, with the property that for any test sequence $(f_n)_{n<\omega}:G_N\rightarrow \F$, 
$$\F^{eq}\models \beta([f_n(\bar{Y}_1)]_E, \ldots, [f_n(\bar{Y}_N)]_E, [f_n(\bar{w}_1)]_{E_{i_1}}, \ldots, [f_n(\bar{w}_\ell)]_{E_{i_\ell}})$$
for all but finitely many $n$. 

We first prove that we can find a formal solution $\bar w$, such that for at least one tuple, say $\bar w_j$, for $j\leq \ell$, in $\bar{w}=(\bar{w}_1, \bar{w}_2, \ldots, \bar{w}_\ell)$, its corresponding  equivalence class does not depend on $\F$. We assume, for a contradiction, not. Hence, there exists a fixed $\ell$-tuple, say $\bar b_1$, in $\F^{eq}$ such that for any test sequence, $(f_n)_{n<\omega}$, of 
$\mathcal{T}^N$, and for all $n$ large enough (the $n$ depends on the sequence), we have $R_E(\alpha_N([f_n(\bar{Y}_1)]_E, \ldots, [f_n(\bar{Y}_N)]_E, z),\bar b_1)$. We now define the following formula.

$$\beta_1(x_1, x_2, \ldots, x_N, z_1, z_2, \ldots, z_\ell):=\beta(x_1, x_2, \ldots, x_N, z_1, z_2, \ldots, z_\ell)\land \bar z\neq \bar b_1$$

We re-apply, Corollary \ref{HyperImplicit} for the formula $\beta_1$ and the same test sequence. Either we will get some formal solution $\bar w'$ such that one of its tuples does not depend on $\F$ or there exists a fixed $\ell$-tuple $\bar b_2\neq \bar b_1$, in $\F^{eq}$ such that for any test sequence, $(f_n)_{n<\omega}$, of 
$\mathcal{T}^N$, and for all $n$ large enough (the $n$ depends on the sequence), we have $R_E(\alpha_N([f_n(\bar{Y}_1)]_E, \ldots, [f_n(\bar{Y}_N)]_E, z),\bar b_2)$. We likewise define, for $j<m$, formulas: 

$$\beta_j(x_1, x_2, \ldots, x_N, z_1, z_2, \ldots, z_\ell):=\beta(x_1, x_2, \ldots, x_N, z_1, z_2, \ldots, z_\ell)\land \bar z\neq \bar b_1\land\ldots\land\bar z\neq \bar b_j $$ 

Suppose now, for the sake of contradiction, that at no step we can find a formal solution that does not depend on $\F$ and so we have produced distinct $\bar b_1, \bar b_2, \ldots, \bar b_m$ such that for any test sequence, $(f_n)_{n<\omega}$, of 
$\mathcal{T}^N$, and for all $n$ large enough (the $n$ depends on the sequence), we have $R_E(\alpha_N([f_n(\bar{Y}_1)]_E, \ldots, [f_n(\bar{Y}_N)]_E, z),\bar b_1)\land \ldots \land R_E(\alpha_N([f_n(\bar{Y}_1)]_E, \ldots, [f_n(\bar{Y}_N)]_E, z),\bar b_m)$. Since $R_E(a, \bar z)$ contains at most $m$ $\ell$-tuples, for any $a\in S_E(\F)$, we get that for any test sequence, $(f_n)_{n<\omega}$, of 
$\mathcal{T}^N$, and for all $n$ large enough (the $n$ depends on the sequence) the sum $\alpha_N([f_n(\bar{Y}_1)]_E, \ldots, [f_n(\bar{Y}_N)]_E, z)$ is fixed and equal to the unique element that satisfies the formula $R_E(z,\bar b_1)\land \ldots \land R_E(z,\bar b_m)$. In order to obtain a contradiction we refine the restriction of $(g_n)_{n<\omega}$ to the last copy of $\mathcal{T}$ in $\mathcal{T}^N$. This refined sequence, $(g'_n)_{n<\omega}$, will give different values than $(g_n)_{n<\omega}$, up to $E$-equivalence, to $\bar Y_N$, but the same values to $\bar Y_1, \ldots, \bar Y_{N-1}$, for all $n<\omega$. In particular, for some large enough $n$, we will get that the sum of $[g_n(\bar{Y}_1)]_E, \ldots, [g_n(\bar{Y}_N)]_E$ is equal to the sum of $[g'_n(\bar{Y}_1)]_E, \ldots, [g'_n(\bar{Y}_N)]_E$ but $[g_n(\bar{Y}_N)]_E\neq [g'_n(\bar{Y}_N)]_E$, a contradiction.


Finally, we see the $N$-multiplet tower, $\mathcal{T}^N$, as a star of isomorphic groups with $N$ rays. Let $(g_n)_{n<\omega}:G_N\rightarrow\F$ be the test sequence we constructed in the previous paragraph and $\bar{w}$ a formal solution for which at least one tuple in it does not depend on $\F$. Without loss of generality the tuple $\bar w_1$ that corresponds to the basic equivalence relation $E_b$ is such. Let $G_N^i$ be the subgroup of $G_N$ generated by $\langle \bar U_i, \bar Y_i, \bar a\rangle$, for $i\leq N$. Each such subgroup has the structure of a tower and, as matter of fact, we may give it the same structure as $\mathcal{T}$. We may also assume that under $(g_n)_{n<\omega}$, the growth rate of $G_N^i$ dominates the growth rate of $G_N^{i-1}$. Since, each such subtower is independent from the rest, i.e. the order in which we construct them while constructing $\mathcal{T}^N$ can be chosen arbitrarily, we may permute the values of $(g_n)_{n<\omega}$ so that each time the subtowers are revealed in different order. For example if we exchange the values $g_n\upharpoonright_{G_N^1}$ with the corresponding values $g_n\upharpoonright_{G_N^2}$ and leave the rest identical, then this new sequence will be a test sequence for $\mathcal{T}^N$ that will reveal last the tower that corresponds to $G^2_N$, second last the tower that corresponds to $G^1_N$ etc. In particular, if $\sigma$ is a permutation in $S_N$, then we denote by $(g^{\sigma}_n)_{n<\omega}$ the test sequence that permutes the values of $(g_n)_{n<\omega}$ according to the permutation $\sigma$. We next consider the values of the tuple $\bar w_1:=\bar w_1(\bar U_1, \bar Y_1, \ldots, \bar U_N, \bar Y_N, \bar a)$ under the distinct test sequences $(g^{\sigma}_n)_{n<\omega}$ for $\sigma$ in $S_N$. It is actually the same as the image of $\sigma(\bar w_1)$, i.e. the tuple whose reduced form with respect to the star of isomorphic groups is a reduced form of $\bar w_1$ with the factor subgroups permuted according to $\sigma$,  under the test sequence $(g_n)_{n<\omega}$. 

Hence, since, by Abelianity, the $N$ summation of $[g^{\sigma}_n(\bar Y_1)]_E, \ldots, [g_n^{\sigma}(\bar Y_N)]_E$, for any $\sigma$, gives the same value as the $N$ summation of $[g_n(\bar Y_1)]_E, \ldots, [g_n(\bar Y_N)]_E$ we must have that $g_n(\sigma(\bar w_1))$ can have at most $m\cdot \ell$ different values up to the $E_b$-equivalence relation. We take cases according to the basic equivalence class $E_b$:
\begin{itemize}
    \item if $E_b$ is a refinement of $_cE_2$, i.e. the elementary relation of cosets with conjugation, then, by Lemma \ref{OrbitsCosets} and the choice of $N$, we have more than $m\cdot \ell$ distinct $E_2$-classes (and thus $E_b$-classes)  in the orbit of $\bar w_1$ under $S_N$, and by Proposition \ref{CosetDiff} taking also into consideration that the tower is hyperbolic, for $n$ large enough, they are all mapped by $g_n$ to distinct $_cE_2$-classes (and thus $E_b$-classes) in $\F$.
    \item if $E_b$ is a refinement of $_cE_3$, i.e. the elementary relation double cosets with conjugation, then using Lemma \ref{OrbitsCosets} and Proposition \ref{DCosetDiff} gives the same result as the previous point for $_cE_3$-classes and thus for $E_b$-classes.
\end{itemize}

\end{proof}

\subsection{General case}
In this final subsection we will prove the main result of the paper, namely no infinite field is interpretable in the first-order theory of the free group. 

\begin{theorem}\label{MainTheorem}
The first-order theory of the free group does not interpret an infinite field.
\end{theorem}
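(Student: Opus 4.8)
The plan is to combine the two special cases already established, Proposition~\ref{AbCase} and Proposition~\ref{HypCase}, with Sela's relative elimination of imaginaries (Theorem~\ref{Elim}) and the Diophantine Envelope machinery (Fact~\ref{Envelope}) to handle the general case. Suppose, for contradiction, that some infinite field $(K,+,\cdot)$ is interpretable in the first-order theory of the free group. Since this theory does not have the finite cover property \cite{FCP}, it suffices to derive a contradiction in a fixed nonabelian free group $\F$; so we may assume $K$ is interpretable in $\F$, hence (by the two Facts in Section~\ref{MT}) definable in $\F^{eq}$. By Theorem~\ref{Elim} we may take the domain of $K$ to be a definable set $X:=\phi(x)$ in one of the basic sorts $S_E$, and the field operations to be defined by formulas in $\mathcal{L}^{eq}$. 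In particular the additive group $(X,+)$ is an infinite definable abelian group in sort $S_E$; it suffices to show no such $X$ can be given definably the structure of an abelian group, which is exactly the content of the second theorem of the introduction once we know $X$ is not internal to a finite set of centralizers (and if it is, Fact~\ref{CentraBased} together with Wagner's theorem \cite{WagnerInternal} finishes, since a one-based group cannot interpret an infinite field).

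First I would pass from the imaginary formula $\phi(x)$ to a real formula via Theorem~\ref{Elim}: writing $\theta(\bar z):=\exists x(\phi(x)\wedge R_E(x,\bar z))$ and unwinding the basic-sort variables into real variables using the Fact in Section~\ref{Imaginaries}, we obtain a formula $\psi(\bar y)$ in the real sort whose solution set ``codes'' $X$. Then I would apply Fact~\ref{Envelope} to get the Diophantine Envelope of $\psi$: finitely many towers $\mathcal{T}_1(G_1,\F),\dots,\mathcal{T}_n(G_n,\F)$ with designated tuples $\bar g_i$ mapped into $\psi(\F)$ by test sequences. Now the proof splits according to whether \emph{every} $\bar g_i$ depends only on the abelian pouch of $\mathcal{T}_i$. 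If so, Proposition~\ref{AbCase} applies directly and shows $\phi(x)$ is internal to a finite set of centralizers, hence one-based, contradicting that it is the domain of an infinite field. If not, then for some $i$ the tuple $\bar g_i$ genuinely uses a surface or non-pouch abelian floor of $\mathcal{T}_i$, and the goal is to reduce to the hyperbolic situation of Proposition~\ref{HypCase}, or rather to run the same noncommutativity argument on $\mathcal{T}_i$ directly.

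The technical heart is the non-pouch, non-hyperbolic case, where $\mathcal{T}_i$ has genuine abelian floors. Here I would form the $N$-multiplet $\mathcal{T}_{i,N}$ as in Definition~\ref{MultiAb}, viewed as a star of groups whose common subgroup is the first $n$ (pouch) floors and whose factors are isomorphic towers; then apply Theorem~\ref{Implicit} (rather than its hyperbolic corollary) to $\mathcal{T}_{i,N}$ and the formula $\theta_N$ defining the $N$-fold sum together with $R_E$. Theorem~\ref{Implicit} yields \emph{finitely many} closures of $\mathcal{T}_{i,N}$, each with a formal-solution tuple $\bar w$; by the pigeonhole principle, for $N$ large one closure is hit by test sequences whose first-tower values range over infinitely many $E$-classes. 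On that closure, Lemmas~\ref{OrbitsConjugacy}, \ref{OrbitsCosets}, \ref{OrbitsDCosets} (which only require the pouch commutativity condition $C_G(a)\subseteq A$, valid here since the pouch is abelian-over-free) show that permuting the $N$ rays produces, for each basic equivalence class occurring in $\bar w$, many pairwise-distinct classes — at least $\lceil\lfloor N/2\rfloor/2\rceil$ or $\lfloor(N-2)/2\rfloor$ of them. Pushing these through the test sequence via Lemmas~\ref{ConjDiff}, \ref{CosetDiff}, \ref{DCosetDiff}, and using that one formal-solution coordinate $\bar w_j$ has no representative in $\F$ (same sub-argument as in the proof of Proposition~\ref{HypCase}, forcing the $N$-summand to actually vary when we swap in a subsequence on one ray), we get strictly more than the $m\cdot\ell$ tuples that any single sum value can carry under $R_E$. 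But commutativity of $+$ forces permuting the summands to fix the sum, a contradiction. I expect this last case — reconciling closures of the multiplet tower with the ray-permutation action of $S_N$ on the basic imaginary sorts, and verifying the pouch-commutativity hypothesis persists in the closure — to be the main obstacle; everything else is assembly of the pieces already in hand.
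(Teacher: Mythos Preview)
Your proposal is correct and follows essentially the same route as the paper. The obstacle you flag—reconciling the finitely many closures with the ray-permutation action—is resolved by a diagonal argument: choose by pigeonhole a single closure of $\mathcal{T}$ to which a cofinal subsequence of the original test sequence extends and use that same closure for all $N$ rays of the multiplet (so permuting the rays preserves extendability), and note that in the non-hyperbolic case the formal solution must be shown not to depend on the \emph{abelian pouch of the closure} (the common subgroup of the star), not merely on $\F$.
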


\begin{proof}
Suppose, for the sake of contradiction, that $\phi(x)$ can be given definably an Abelian group structure and $\alpha(x,y,z)$ is the formula that defines the graph of the Abelian group operation on $\phi(\F^{eq})$. 
By Theorem \ref{Elim}, the elimination relation, $R_E$, for $E$ gives a bounded number, say $m$, of $\ell$-tuples in the basic sorts for any element in the sort $S_E$.

 
Now, since every tuple of sort $E$, has at most $m$ images under $R_E$, we may decompose $\phi(x)$ in a disjoint union of $m$ formulas using the formulas $\theta_1(\bar z_1), \theta_2(\bar z_1, \bar z_2), \ldots, \theta_m(\bar z_1, \bar z_2, \ldots, \bar z_m)$ that we now define: The formula $\theta_i$, $i\leq m$, is satisfied by a tuple consisting of the $i$-many distinct solutions of $R_E(a,\bar z)$ in $\F^{eq}$, such that $\F\models \phi(a)$ and $|R_E(a, \F^{eq})|=i$. It is obvious that for each $a$ that satisfies $\phi(x)$ in $\F^{eq}$, there exists a (unique) $i\leq m$ such that $\theta_i$ is realized by $(\bar \gamma_1, \ldots, \bar \gamma_i)$ with 

$$\F^{eq}\models R_E(a, \bar \gamma_1)\land\ldots\land R_E(a, \bar \gamma_i)\bigwedge_{k<r\leq i} \bar \gamma_r\neq \bar \gamma_k\bigwedge \forall \bar z (R_E(a,\bar z)\rightarrow \bigvee_{j\leq i}\bar z=\bar \gamma_j)$$

For each $i\leq m$, consider the real formula $\psi_i(\bar y_1, \ldots \bar y_i)$, where $\bar y_j:=(\bar y_j^1, \ldots, \bar y_j^{\ell})$ that corresponds to $\theta_i(\bar z_1, \ldots, \bar z_i)$:

$$\F^{eq}\models\forall\bar{y}_1,\ldots\bar y_i\bigl(\theta_i(f_{E_1}(\bar{y}^1_1), \ldots, f_{E_{\ell}}(\bar y^\ell_1), \ldots,f_{E_1}(\bar{y}^1_i), \ldots, f_{E_{\ell}}(\bar y^\ell_i) )\leftrightarrow\psi_i(\bar{y}_1, \ldots, \bar y_i)\bigr)$$

By Fact \ref{Envelope}, for each $i\leq m$, there exist finitely many towers $\mathcal{T}^i_1(G^i_1,\F), \ldots, \mathcal{T}^i_{k_i}(G^i_{k_i},\F)$, which form the Diophantine Envelope of $\psi_i(\bar{y}_1, \ldots,\bar y_i)$, 
and by Proposition \ref{AbCase} we may assume that for at least one tower, say $\mathcal{T}(G, \F):=\mathcal{T}_r^p(G_r^p,\F)$ for some $p\leq m$ and $r\leq k_p$, with $G:=G(\bar u, \bar y, \bar a)$, the tuple of elements $\bar{y}$ does not depend on the Abelian pouch of $\mathcal{T}$. We choose $\bar y_j=(\bar y_j^1, \ldots, \bar y_j^\ell)$, a sub-tuple of $\bar y$, that "represents" some tuple $\bar z_j$ in   $\theta_p(\bar{z}_1, \ldots, \bar {z}_p)$ and does not depend on the Abelian pouch. In particular, by Proposition \ref{finGenERPouch} and Lemmas \ref{finiteConj}, \ref{finCosetsPouch}, \ref{finDCosetsPouch}, and \ref{finDTranslatesPouch} for any test sequence, $(h_n)_{n<\omega}$, of $\mathcal{T}$, the images of $(\bar y^1_j, \ldots, \bar y^\ell_j)$ under $(h_n)_{n<\omega}$ take infinitely many values up to the corresponding equivalence relations.   
We fix $N$ such that $\lceil\frac{\lfloor (N-1)/2\rfloor}{2}\rceil$ is larger than $m\cdot \ell+1$,  
and we consider the formula $\alpha_N(x_1, x_2, \ldots, x_N, z)$ in $\mathcal{L}^{eq}$ that defines the summation of $N$ elements of $\phi_p(x)$, where $\phi_p(x)$ defines the subset of $\phi(x)$ that consists of the elements that have exactly $p$ images under $R_E$. By our previous observation $\phi_p(x)$ has infinitely many solutions in $\F^{eq}$. For convenience of notation, we will denote by $R_E^p(x, \bar z_1, \ldots, \bar z_p)$ the formula 
$$R_E(x,\bar z_1)\land R_E(x, \bar z_2)\land \ldots\land R_E(x, \bar z_p)\bigwedge_{i<j\leq p}\bar z_i\neq \bar z_j$$ 
and by $R(x, \bar z_1, \ldots, \bar z_m)$ the formula 
$$\bigwedge_{i\leq m} R_E(x,\bar z_i)\bigwedge \Bigl[(R_E(x,\bar z_1)\land |R_E(x,\bar z)|=1)\lor (R_E^2(x,\bar z_1, \bar z_2)\land |R_E(x,\bar z)|=2) \lor \ldots \lor $$
$$\lor (R_E^m(x,\bar z_1, \bar z_2,\ldots, \bar z_m)\land |R_E(x,\bar z)|=m)\Bigr]$$ 
Since, every $a$ in the sort $S_E$ has at most $m$ images under $R_E$, we get that $\F^{eq}\models\forall x\exists \bar z_1, \ldots, \bar z_m$ $\bigl( R(x, \bar z_1, \ldots, \bar z_m)\bigr)$, but even more importantly $\F^{eq}\models \forall x_1, x_2, \bar z_1, \ldots, \bar z_m \bigl(R(x_1,\bar z_1, \ldots, \bar z_m) \land R(x_2,\bar z_1, \ldots, \bar z_m) \rightarrow x_1=x_2\bigr)$. We change into basic sorts with the following formula: 
$$\zeta(\bar{z}^1, \ldots, \bar{z}^N, \bar{s}):=\exists x_1, x_2, \ldots, x_N, s\bigl(\alpha_N(x_1, x_2, \ldots, x_N, s)\bigwedge_{i\leq N} R_E^p(x_i, \bar{z}^i_1, \ldots, \bar z^i_p)\land R(s, \bar{s}_1, \ldots, \bar s_m)\bigr)$$

We note that $\forall \bar z^1, \bar z^2, \ldots, \bar z^N\exists^{<\infty}\bar s\bigl(\zeta(\bar z^1, \bar z^2, \ldots, \bar z^N, \bar s)\bigr)$. We now take an $N$-multiplet, $\mathcal{T}^N(G_N,\F)$, with $G_N:=G_N(\bar U^1, \bar Y^1, \ldots, \bar U^N, \bar Y^N, \bar a)$, of the tower $\mathcal{T}$. The $\bar Y^i$'s are copies of $\bar y$ of the tower $\mathcal T$ and belong to the corresponding "copy" of $\mathcal{T}$ in $\mathcal{T}^N$.  
Recall that there exists a test sequence, $(h_n)_{n<\omega}:G\rightarrow\F$, of $\mathcal{T}$, such that $h_n(\bar y)$ satisfies $\psi_p(\bar y)$ in $\F$, for all $n<\omega$. Moreover, we may refine this test sequence so that the images of $\bar y$ under $(h_n)_{n<\omega}$ are pairwise distinct up to the corresponding equivalence relations. We further refine $(h_n)_{n<\omega}$ as follows. Let $Cl(\mathcal{T})$ be the closure of $\mathcal{T}$ provided by Corollary \ref{FixCosetTest}. For each Abelian floor $\mathcal{G}(G^{i+1}, G^i)$, with free Abelian group $\Z\oplus\Z^{m_i}$, of $\mathcal{T}$, there is a closure, $f:\Z\oplus\Z^{m_i}\rightarrow\Z\oplus A^{m_i}$, in $Cl(\mathcal{T})$. For each such floor we may choose the powers $(k_{1}(n), \ldots, k_{m_i}(n))$, of specializations via $(h_n)_{n<\omega}$, $(a(n), a(n)^{k_{1}(n)}, \ldots, a(n)^{k_{m_i}(n)})$, of the free Abelian group in a way that, for every $n<\omega$, they all belong to the same coset $(\mu_1, \ldots, \mu_{m_i})+\Delta_i$, for $\Delta_i$ a finite index subgroup of $Z^{m_i}$.     

In addition, we may extend $(h_n)_{n<\omega}$ to a test sequence, $(g_n)_{n<\omega}:G_N\rightarrow\F$, such that $g_n(\bar Y_i)$ satisfies $\psi_p(\bar y)$ in $\F$, for any $i\leq N$ and all $n<\omega$. Indeed, $(g_n)_{n<\omega}$ can be defined as follows: Its restriction to the first copy of $\mathcal{T}$ is  $(h_n)_{n<\omega}$, its restriction to the second copy is a refinement of $(h_n)_{n<\omega}$ in a way that the growth rate conditions are met, and likewise for the rest of the copies. 

We, thus, may apply Theorem \ref{Implicit} to $\mathcal{T}^N$, the formula $\zeta(\bar z_1, \bar z_2, \ldots, \bar z_N, \bar s)$ and the test sequence $(g_n)_{n<\omega}$. We get a closure, $Cl(\mathcal{T}^N)$, of $\mathcal{T}^N$, with $Cl(G_N):=Cl(G_N)(\bar V^1, \bar U^1, \bar Y^1, $ $\ldots, \bar V^N, \bar U^N, \bar Y^N, \bar V^0$ $\bar a)$ for which a subsequence of $(g_n)_{n<\omega}$ extends to a test sequence, still denoted $(g_n)_{n<\omega}$,  and a formal solution, $\bar w (\bar V, \bar U_1, \bar Y_1, \ldots, \bar U_N, \bar Y_N,$ $\bar a)$, for $\bar s$,  such that: 
$$\F^{eq}\models\zeta\bigl([g_n(\bar Y^1_1)],\ldots,[g_n(\bar Y^1_p)],\ldots, [g_n(\bar Y^N_1)], \ldots, [g_n(\bar Y^N_p)], [g_n(\bar w_1)], \ldots, [g_n(\bar w_m)]\bigr)$$

Where by $[\bar b]$ we denote the tuple of basic equivalent classes $[\bar b_1]_{E_1}, \ldots, [\bar b_\ell]_{E_{\ell}}$ that correspond to the sorts of the last $\ell$ arguments of the relation $R_E(x, z_1, \ldots, z_{\ell})$. 
\\ \\
{\bf Claim:} The formal solution $\bar w$ does not depend on the Abelian pouch of $Cl(\mathcal{T}^N)$. \\ \\
{\bf Proof of claim:}
Indeed, suppose for the sake of contradiction, that it does. Recall that $\{[g_n(\bar Y^N)] \ | \ n<\omega\}$ is infinite. 
We modify $(g_n)_{n<\omega}$ in order to obtain a new test sequence as follows. The sequence $(g'_n)_{n<\omega}$ agrees with $(g_n)_{n<\omega}$ on the Abelian pouch of $Cl(\mathcal{T}^N)$ and all closures of copies of $\mathcal{T}$ but the last. On the closure of the last copy we use Facts \ref{FactSela} and \ref{BasisFlex} to change the values of $(g_n)_{n<\omega}$ so that the equivalence classes of the images of $\bar Y^N$ still belong to the definable set.  We may assume that for infinitely many $n$, $g_n(\bar Y^N)\neq g'_n(\bar Y^N)$ (if not refine the restriction of $(g'_n)_{n<\omega}$ to the appropriate part of the tower). The sum of the $N$ elements that correspond to $g_n(\bar Y_1), \ldots, g_n(\bar Y_N)$ and the sum of the $N$ elements that correspond to $g'_n(\bar Y_1), \ldots, g'_n(\bar Y_N)$ will be the same for all but finitely many $n<\omega$. Indeed, the two sequences agree in the Abelian pouch of $Cl(\mathcal{T}^N)$. On the other hand, for infinitely many $n<\omega$, $[g_n(\bar Y^N)]\neq [g'_n(\bar Y^N)]$, a contradiction. \qed  
\\

For each Abelian floor $\mathcal{G}(G^{i+1}, G^i)$, with free Abelian group $\Z\oplus\Z^{m_i}$, of $\mathcal{T}$, which is higher than the Abelian pouch, we have $N$ copies of it in the tower $\mathcal{T}^N$. To each such copy corresponds a closure, $f_j:\Z\oplus\Z^{m_i}\rightarrow\Z\oplus A^{m_i}_j$, for $j\leq N$, in $Cl(\mathcal{T}^N)$. Recall that we extended $(h_n)_{n<\omega}$, a test sequence for $\mathcal{T}$, to a test sequence, $(g_n)_{n<\omega}$, for $\mathcal{T}^N$, in a way that for each Abelian floor the powers, $(k_{1,j}(n), \ldots, k_{m_i,j}(n))$, of specializations via $(g_n)_{n<\omega}$, $(a_j(n), a_j(n)^{k_{1,j}(n)}, \ldots, a_j(n)^{k_{m_i,j}(n)})$, of the free Abelian groups, for $j\leq N$, all belong to the same coset $(\mu_1, \ldots, \mu_{m_i})+\Delta_i$, for $\Delta_i$ a finite index subgroup of $Z^{m_i}$. Hence, since a subsequence of $(g_n)_{n<\omega}$ extends to a test sequence of the closure $Cl(\mathcal{T}^N)$, the intersection of $(\mu_{1,j}, \ldots, \mu_{m_i,j})+\Delta_{i,j}$, for $j\leq N$, of cosets that correspond to the closures $\Z\oplus A^{m_i}_j$ defines a non-empty coset $(\nu_1, \ldots, \nu_{m_i})+\bigcap_{j\leq N}\Delta_{i,j}$, for $\bigcap_{j\leq N}\Delta_{i,j}$ a finite index subgroup of $Z^{m_i}$. Moreover, we may assume that $(\nu_1, \ldots, \nu_{m_i})+\bigcap_{j\leq N}\Delta_{i,j}\subseteq (\mu_1, \ldots, \mu_{m_i})+\Delta_i$. 

We now may choose, by Fact \ref{BasisFlex}, a test sequence, $(f_n)_{n<\omega}$, of $\mathcal{T}^N$ such that for each $i$ that corresponds to an Abelian floor of $\mathcal{T}$, which is higher than the Abelian pouch, and all $N$ copies of it in $\mathcal{T}^N$, the powers, $(k_{1,j}(n), \ldots, k_{m_i, j}(n))$, of specializations via $(f_n)_{n<\omega}$, $(b_j(n), b_j(n)^{k_{1,j}(n)}, \ldots, b_j(n)^{k_{m_i,j}(n)})$, for $j\leq N$, all belong to the coset $(\nu_1, \ldots, \nu_{m_i})+\bigcap_{j\leq N}\Delta_{i,j}$. In particular, all values extend to the corresponding closures in $Cl(\mathcal{T}^N)$ and in addition all images of $\bar Y_i$, for $i\leq N$, under $(f_n)_{n<\omega}$, correspond to $E$-classes that belong to $\phi_p(x)$. The reason for using a test sequence with these properties, instead of $(g_n)_{n<\omega}$, is that when we permute the values of $(f_n)_{n<\omega}$, "exchanging" the values it gives to different copies of $\mathcal{T}$, they still extend to a test sequence of $Cl(\mathcal{T}^N)$.  

The tower $Cl(\mathcal{T}^N)$ may be seen as a star of groups where the Abelian pouch of $Cl(\mathcal{T}^N)$ is the common subgroup. We note that in this case, since the corresponding Abelian floors in different copies of $\mathcal{T}$ might be assigned different closures, the factor subgroups are not necessarily isomorphic. We use Proposition \ref{SymmetricTowers} to "symmetrise" the closure $Cl(\mathcal{T}^N)$. We denote the symmetrised tower by $Sm(Cl(\mathcal{T}^N))$, with $Sm(Cl(G_N)):=Sm(Cl(G_N))(\bar C^1, \bar V^1, \bar U^1, \bar Y^1, $ $\ldots,\bar C^N, \bar V^N, \bar U^N, \bar Y^N, \bar V^0$ $\bar a)$ and will refer to it as the symmetric closure. We  express the tuple $\bar w$ in the generators of the symmetric closure, i.e. $\bar w:=\bar w(\bar C^1, \bar U^1, \bar Y^1, \ldots, \bar C^N, \bar U^N, \bar Y^N, \bar V^0$ $\bar a) $. We note that for this $\bar w$ and the extension of $(f_n)_{n<\omega}$ to $Sm(Cl(\mathcal{T}^N))$, still denoted by $(f_n)_{n<\omega}$, we have:   

$$\F^{eq}\models\zeta\bigl([f_n(\bar Y^1_1)],\ldots,[f_n(\bar Y^1_p)],\ldots, [f_n(\bar Y^N_1)], \ldots, [f_n(\bar Y^N_p)], [f_n(\bar w_1)], \ldots, [f_n(\bar w_m)]\bigr)$$

We finally consider the reduced form of $\bar w$ with respect to the star of groups decomposition of $Sm(Cl(\mathcal{T}^N))$. Recall that there exists a (sub-)tuple $w$ in $\bar w$ that does not depend on the Abelian pouch of the original $Cl(\mathcal{T}^N)$. We may assume that for infinitely many $n$, $f_n(w)$, belongs to the image of the sum of the elements that correspond to $f_n(\bar Y^1), \ldots, f_n(\bar Y^N)$. When we move to the symmetric closure we might make $w$ depend on the Abelian pouch. This is not a problem, because $w$ still does not belong to the Abelian pouch of the symmetric closure and, thus, we can still use Lemma \ref{OrbitsCosets} in order to give different values to the coarser equivalence class it belongs to.  

Suppose the words in $w$ have the following form (for simplicity of notation we only exhibit one of them) 

$$w=a_1(\bar C^{i_1}, \bar U^{i_1}, \bar Y^{i_1}, \bar V^0, \bar a)a_2(\bar C^{i_2}, \bar U^{i_2}, \bar Y^{i_2}, \bar V^0, \bar a)\ldots a_q(\bar C^{i_q}, \bar U^{i_q}, \bar Y^{i_q}, \bar V^0, \bar a)$$ 

where $i_j\in\{1,\ldots, N\}$ and $i_j\neq i_{j+1}$, for $j<q$. For any $\sigma$ in the symmetric group $S_N$, we consider the test sequence $(f_n^\sigma)_{n<\omega}$ whose values are identical with those of $(f_n)_{n<\omega}$ when restricted to the Abelian pouch and are a permutation of the values of $(f_n)_{n<\omega}$ according to $\sigma$ for the floors above the Abelian pouch. By the choice of $(f_n)_{n<\omega}$, every $(f_n^\sigma)_{n<\omega}$ extends to a test sequence of the symmetric closure $Sm(Cl(\mathcal{T}^N))$, and it permutes the values of $\bar Y^i$'s according to $\sigma$. Moreover, by the choice of the symmetric closure the value $f_n^\sigma(w)$ is equal to the value $f_n(w^\sigma)$, where $w^\sigma$ is: 

$$a_1(\bar C^{\sigma(i_1)}, \bar U^{\sigma(i_1)}, \bar Y^{\sigma(i_1)}, \sigma (\bar V^0), \bar a)a_2(\bar C^{\sigma(i_2)}, \bar U^{\sigma(i_2)}, \bar Y^{\sigma(i_2)}, \sigma(\bar V^0), \bar a)\ldots$$ $$a_q(\bar C^{\sigma(i_q)}, \bar U^{\sigma(i_q)}, \bar Y^{\sigma(i_q)}, \sigma(\bar V^0), \bar a)$$

We take cases according to which basic equivalence relation $w$ corresponds to. Recall that $w$ is either a couple or a triple. 

\begin{itemize}
    \item If $(w^1, w^2)$ represents a refinement of an $_cE_2$-class , then, by Lemma \ref{OrbitsCosets} and the choice of $N$, we have more than $m\cdot\ell$ distinct $_cE_2$-classes in the orbit of $(w^1,w^2)$ under $S_N$. 
    By Proposition \ref{CosetDiff}, for all large enough $n$, all distinct $(w^1, w^2)^\sigma$ are mapped by $f_n$ to distinct $_cE_2$-classes in $\F$. 
    \item If $(w^1, w^2, w^3)$ represents a refinement of an $_cE_3$-class, then using Lemma \ref{OrbitsCosets} and Proposition \ref{DCosetDiff} yields the same result as the previous point.
\end{itemize}

\end{proof}


\bibliography{biblio}
\ \\
Rizos Sklinos\\ \\
Academy of Mathematics \& Systems Science\\
Chinese Academy of Science\\
Room 1016 SiYuan Building\\
No.55 of Zhongguancun East Road\\
Haidian District, Beijing 100190\\
P. R. China

\end{document}